\setlist{itemsep=0pt}
\theoremstyle{definition}
\newtheorem{theorem}{Theorem}[section]
\newtheorem{proposition}{Proposition}
\newtheorem{corollary}{Corollary}
\newtheorem{example}{Example}
\theoremstyle{remark}
\newcommand{\R}{\mathbb{R}}
\newcommand{\E}{\mathbb{E}}
\DeclareMathOperator{\trace}{tr}
\DeclareMathOperator{\Var}{Var}
\DeclareMathOperator{\Cov}{Cov}
\renewcommand{\d}{\mathrm{d}}
\renewcommand{\div}{\mathrm{div}}
\newcommandx{\oz}[2][1=]{\todo[linecolor=OliveGreen,backgroundcolor=OliveGreen!25,bordercolor=OliveGreen,#1]{#2}}
\begin{document}

\title{Optimal Riemannian metric for Poincaré inequalities and how to ideally precondition Langevin dymanics}

\author{
Tiangang Cui\footnote{tiangang.cui@sydney.edu.au},
Xin Tong\footnote{xin.t.tong@nus.edu.sg},
Olivier Zahm\footnote{olivier.zahm@inria.fr}
}

\maketitle

\begin{abstract}
Poincaré inequality is a fundamental property that rises naturally in different branches of mathematics. The associated Poincaré constant plays a central role in many applications since it governs the convergence of various practical algorithms. For instance, the convergence rate of the Langevin dynamics is exactly given by the Poincaré constant. This paper investigates a Riemannian version of Poincaré inequality where a positive definite weighting matrix field (\emph{i.e.} a Riemannian metric) is introduced to improve the Poincaré constant, and therefore the performances of the associated algorithm. Assuming the underlying measure is a \emph{moment measure}, we show that an optimal metric exists and the resulting Poincaré constant is 1. We demonstrate that such optimal metric is necessarily a \emph{Stein kernel}, offering a novel perspective on these complex but central mathematical objects that are hard to obtain in practice. We further discuss how to numerically obtain the optimal metric by deriving an implementable optimization algorithm. The resulting method is illustrated in a few simple but nontrivial examples, where solutions are revealed to be rather sophisticated. We also demonstrate how to design efficient Langevin-based sampling schemes by utilizing the precomputed optimal metric as a preconditioner.
~\\

 \textbf{Keywords:}
 Poincaré inequalities,
 Langevin SDE,
 Stein kernels,
 Moment measures.
\end{abstract}

\section{Introduction}

In 1890, Poincaré inequality was first proposed in \cite{poincare1890equations} as a result of calculus. Later, its deep connection with various directions of mathematics, such as functional analysis, geometry, and probability theory have been discovered and thoroughly studied \cite{bakry2008simple,bebendorf2003note,cheeger1999differentiability,ledoux2015stein,villani2021topics}.
Recently, the significance of Poincaré in applied mathematics have also been recognized in a wide range of applications. For examples, the convergence analysis of stochastic differential equations \cite{bakry2014analysis,vempala2019rapid}, MCMC samplers \cite{andrieu2022poincar,andrieu2022comparison,cheng2024fast,chewi2024analysis}, sensitivity analysis \cite{flock2023certified,roustant2017poincare,song2019derivative} and dimension reduction \cite{cui2022prior,li2023principal,li2024sharp,Teixeira2020,verdiere2023diffeomorphism}, to name just a few.
Given a probability measure $\mu$ on $\R^d$, the Poincaré constant $C(\mu)\in [0,\infty)$ is the smallest constant such that the Poincaré inequality
\begin{equation}\label{eq:Poincare}
 \Var_\mu(f) \leq C(\mu) \int \|\nabla f(x)\|^2 \d\mu(x)
\end{equation}
holds for any sufficiently smooth function $f:\R^d\rightarrow\R$.
Here, $\Var_\mu(f)=\int (f-\int f\d\mu)^2\d\mu$ denotes the variance of $f$ under $\mu$ and $\|\cdot\|$ is the Euclidean norm of $\R^d$. Showing that the Poincaré constant $C(\mu)$ is finite, and providing a tractable estimate of its optimal value, has long been a challenging and active problem in pure and applied mathematics \cite{acosta2004optimal,payne1960optimal,pillaud2020statistical}.

The Poincaré constant plays a central role in determining the performance and the convergence rate of numerous stochastic algorithms.
A classical example is the (overdamped) Langevin stochastic differential equation (SDE)
\begin{equation}\label{eq:SDE_classic}
 \d X_t = -\nabla V(X_t)\d t + \sqrt{2}\d B_t ,
\end{equation}
where $B_t$ is the standard Brownian motion in $\R^d$ and $V:\R^d\rightarrow\R$ is a smooth potential function. Denoting by $\mu_t$ the probability measure of the solution $X_t$ to \eqref{eq:SDE_classic}, it is well known that $\mu_t$ converges to the equilibrium distribution
\begin{equation}\label{eq:Vpotential}
 \d \mu (x) \propto\exp(-V(x))\d x ,
\end{equation}
if and only if $\mu$ satisfies the Poincaré inequality with $C(\mu)<\infty$, for instance, see \cite{bakry2014analysis} and Proposition \ref{prop:EquivalentSDE_WPI} below. In particular, $\mu_t$ converges to $\mu$ at a rate of $e^{-2t/C(\mu)}$, which highlights the central role of the Poincaré constant.

The Bakry-\'Emery theorem \cite{bakry2006diffusions,bakry2014analysis} provides the baseline estimate for the Poincaré constant.
It states that $C(\mu)\leq1/\rho$ for any measure $\d \mu(x)\propto\exp(-V(x))\d x$ with $\rho$-convex potential $V$, \emph{i.e.} $V$ is twice-differentiable and its Hessian is uniformly bounded as $\text{Hess}(V(x))\succeq \rho I_d$ for some constant $\rho>0$. Here, $\succeq$ denotes the Loewner order and $I_d \in \R^{d \times d}$ the identity matrix. However, this convexity assumption may not be satisfied by many probability measures of interest, such as those of multi-modal or heavy-tailed distributions. In these situations, the Poincaré constant can be arbitrarily large, or it may even be infinite.

In this paper, we consider a Riemannian version of the Poincaré inequality:
\begin{equation}\label{eq:Poincare_Riemannian}
 \Var_\mu(f) \leq C(\mu , W) \int  \nabla f(x)^\top  W(x)\nabla f(x) \d\mu(x) ,
\end{equation}
where $W:x\mapsto W(x)\in \mathcal{S}_{+}^d$ is a field taking value in the convex cone $\mathcal{S}_{+}^d\subset \R^{d\times d}$ of symmetric positive semi-definite matrices.
Again, $C(\mu, W)$ is the smallest constant such that \eqref{eq:Poincare_Riemannian} holds for any sufficiently smooth function $f$. The field $W$ provides a Riemannian metric on the space $\R^d$ that assigns to each $x\in\R^d$ an inner product $\langle u,v\rangle_x = u^\top W(x) v$, where $u,v\in\R^d$.
The inequality \eqref{eq:Poincare_Riemannian} extends existing work in several ways.
{
\begin{itemize}
 \item It encompasses the \emph{Brascamp--Lieb inequality} \cite{bobkov2000brunn,brascamp1976extensions}
 which corresponds to \eqref{eq:Poincare_Riemannian} with $W(x) = (\text{Hess\,}V(x))^{-1}$ and $C(\mu,W)=1$, assuming the function $V$ in \eqref{eq:Vpotential} is strongly convex. This log-concavity assumption on $\mu$ is, however, quite restrictive.

 \item The \emph{Mirror Poincaré inequality}, as defined in \cite{chewi2020exponential}, corresponds to $W(x) = (\text{Hess\,}\varphi(x))^{-1}$ for some strongly convex function $\varphi$ which is arbitrarily chosen based on the application \cite{liu2023mirror}.  Although the resulting constant $C(\mu , W)$ in \eqref{eq:Poincare_Riemannian} depends heavily on the choice of $\varphi$, there is currently no established notion of an optimal choice for $\varphi$.

 \item It also generalizes the \emph{weighted Poincaré inequalities} considered in \cite{bobkov2009weighted,bonnefont2016spectral}, in which $W(x)=w(x) I_d$ with a positive scalar field $w:x \rightarrow w(x) \in \R_{+}$. Such weighted Poincaré inequalities have been thoroughly analyzed in \cite{bonnefont2016note,saumard2019weighted,song2019derivative,heredia2024one} in dimension $d=1$.
\end{itemize}

}

The inequality \eqref{eq:Poincare_Riemannian} can be used in many ways. We motivate the work presented here by considering the case where the metric $W$ is used to precondition the Langevin dynamic \eqref{eq:SDE_classic}:
\begin{equation}\label{eq:SDE_matrixWeighted}
 \d X_t = (\div(W)- W\nabla V )\d t + \sqrt{2 W}\d B_t,
\end{equation}
where $\div(W) = (\sum_{j=1}^d \partial_i W_{i,j})_{1\leq i\leq d}$ is the divergence field of $W$ and $\sqrt{W}$ is any square root of $W$ such that $\sqrt{W}(x)\sqrt{W}(x)^\top  = W(x)$.
Following the discussion in \cite[Section 1.15.2]{bakry2014analysis}, such preconditioning strategy can be interpreted as a \emph{local} and \emph{anisotropic} time change of the SDE \eqref{eq:SDE_classic}. It is local because $W(X_t)$ may vary in space and anisotropic as the matrix $W(X_t)$ may not be proportional to the identity matrix. As a result, the diffusion term $\sqrt{2 W}\d B_t$ can locally accelerate or decelerate the particle $X_t$ in different directions. The preconditioning in the form of \eqref{eq:SDE_matrixWeighted} is referred to as the Riemannian Langevin dynamics, see \cite{girolami2011riemann,li2022mirror,patterson2013stochastic} for applications.
%
%
{The following proposition is a reformulation of Theorem 4.2.5 from \cite{bakry2014analysis}, which establishes the exponential convergence rate of \eqref{eq:SDE_matrixWeighted} toward the equilibrium measure $\mu$, provided that $\mu$ satisfies the Poincaré inequality \eqref{eq:Poincare_Riemannian}.
For completeness, the proof is given in Appendix \ref{proof:EquivalentSDE_WPI}.
}

\begin{proposition}\label{prop:EquivalentSDE_WPI}
 Let $\mu$ be a probability measure on $\R^d$ such that $\d\mu(x)\propto\exp(-V(x))\d x$ and let $W:\R^d\rightarrow\mathcal{S}_{+}^d$.
 Then for any $C\geq0$ the following assertions are equivalent.
 \begin{enumerate}
  \item $\mu$ satisfies the Riemannian Poincaré inequality \eqref{eq:Poincare_Riemannian} with $C(\mu,W)\leq C$.

  \item For any probability measure $\mu_0$, the solution $X_t\sim\mu_t$ to the Riemannian Langevin dynamics \eqref{eq:SDE_matrixWeighted} with $X_0\sim\mu_0$ satisfies
  \begin{equation}\label{eq:EquivalentSDE_WPI}
   \chi^2(\mu_t,\mu)\leq e^{-2t/C} \chi^2(\mu_0,\mu) ,
  \end{equation}
  for all $t\geq0$, where $\chi^2(\mu_t,\mu)=\Var_\mu(\d\mu_t/\d\mu)$ denotes the chi-square divergence.
 \end{enumerate}
\end{proposition}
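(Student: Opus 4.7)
The plan is to identify the infinitesimal generator of the SDE, verify its self-adjointness in $L^2(\mu)$ with Dirichlet form equal to the right-hand side of \eqref{eq:Poincare_Riemannian}, and then relate the evolution of the chi-square divergence to this Dirichlet form. For the SDE \eqref{eq:SDE_matrixWeighted}, the drift $b = \div(W) - W\nabla V$ and diffusion $\sigma\sigma^\top = 2W$ give the generator
\begin{equation*}
Lf = b^\top \nabla f + \trace(W\, \mathrm{Hess}(f)) = \div(W\nabla f) - \nabla V^\top W\nabla f.
\end{equation*}
Integrating $\int g\, Lf\, e^{-V}\d x$ by parts, the divergence term contributes $-\int \nabla g^\top W\nabla f\, e^{-V}\d x + \int g\, \nabla V^\top W\nabla f\, e^{-V}\d x$, which cancels the contribution from the drift and leaves
\begin{equation*}
\int g\, Lf\, \d\mu = -\int \nabla g^\top W\nabla f\, \d\mu =: -\mathcal{E}(g,f).
\end{equation*}
In particular $\int Lf\,\d\mu=0$ so that $\mu$ is invariant, $L$ is symmetric in $L^2(\mu)$, and the associated Dirichlet form is exactly $\mathcal{E}(f,f)=\int \nabla f^\top W\nabla f\, \d\mu$.

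For the implication (1)$\Rightarrow$(2), I would introduce $h_t = \d\mu_t/\d\mu$ and use that, thanks to the $\mu$-symmetry of $L$, the density satisfies $\partial_t h_t = L h_t$. Since $\int h_t\,\d\mu = 1$, the chi-square divergence reads $\chi^2(\mu_t,\mu) = \Var_\mu(h_t)$, and differentiating gives
\begin{equation*}
\frac{\d}{\d t}\chi^2(\mu_t,\mu) = 2\int h_t\, Lh_t\, \d\mu = -2\mathcal{E}(h_t,h_t) = -2\int \nabla h_t^\top W\nabla h_t\, \d\mu.
\end{equation*}
Under assertion (1), applying \eqref{eq:Poincare_Riemannian} to $h_t$ yields $\chi^2(\mu_t,\mu) \leq -\tfrac{C}{2}\tfrac{\d}{\d t}\chi^2(\mu_t,\mu)$, and Gr\"onwall's lemma delivers \eqref{eq:EquivalentSDE_WPI}.

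For the converse (2)$\Rightarrow$(1), I would test \eqref{eq:EquivalentSDE_WPI} against a small perturbation of the target. Fix a smooth, bounded $f$ with $\int f\,\d\mu = 0$ and, for $|\varepsilon|$ sufficiently small, set $\d\mu_0 = (1+\varepsilon f)\d\mu$, which is a valid probability measure. Then $\chi^2(\mu_0,\mu) = \varepsilon^2\,\Var_\mu(f)$, and applying the computation of the previous paragraph at $t=0$ gives $\tfrac{\d}{\d t}\big|_{t=0}\chi^2(\mu_t,\mu) = -2\varepsilon^2\int \nabla f^\top W\nabla f\,\d\mu$. Differentiating \eqref{eq:EquivalentSDE_WPI} from the right at $t=0$ and dividing by $\varepsilon^2>0$ produces exactly \eqref{eq:Poincare_Riemannian} for this $f$; a density argument in the weighted Sobolev space associated with $\mathcal{E}$ extends the inequality to arbitrary sufficiently smooth test functions.

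The main obstacle is purely technical: rigorously justifying the integration by parts (controlling boundary terms under minimal assumptions on $V$ and $W$), the differentiation under the integral sign, and the regularity and integrability of $h_t$ that allow one to apply the Poincaré inequality to it. In the spirit of \cite{bakry2014analysis}, I would restrict the core arguments to a sufficiently rich algebra of smooth, rapidly decaying functions and extend by density, invoking standard semigroup-regularisation results to handle general initial data $\mu_0$ and to promote the differential inequality to the full exponential decay \eqref{eq:EquivalentSDE_WPI}.
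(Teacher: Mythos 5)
Your proposal is correct and follows essentially the same route as the paper: the core identity $\frac{\d}{\d t}\chi^2(\mu_t,\mu) = -2\int \nabla h_t^\top W \nabla h_t\,\d\mu$ (which the paper derives via the Fokker--Planck equation rather than the generator/Dirichlet-form language) combined with the Poincaré inequality and Grönwall for one direction, and a linearization of the decay estimate at $t=0$ for the converse. The only cosmetic difference is in the converse: you perturb multiplicatively with $\d\mu_0=(1+\varepsilon f)\d\mu$ and invoke density, whereas the paper tests with an arbitrary positive density $f_0$ and then extends to general smooth $f$ by shifting, scaling and truncation.
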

In this setting, the Riemannian metric $W$ offers great flexibility in improving the convergence rate of the SDE \eqref{eq:SDE_matrixWeighted} by directly minimizing the Poincaré constant $C(\mu,W)$ over $W$.
In this paper, we analyze the optimization problem
\begin{equation}\label{eq:minC}
 \min_{\substack{ W: \R^d\rightarrow \mathcal{S}_{+}^d \\ \int \trace(W) \d\mu = \trace(\Cov_\mu)}} C(\mu,W) ,
\end{equation}
for dimensions $d\geq1$, where $\trace(\cdot)$ denotes the trace operator and $\Cov_\mu = \int (x-m)(x-m)^\top \d\mu(x) $ and $m=\int x\d\mu$ are respectively the covariance matrix and the mean of $\mu$, assuming they both exist. The normalization $\int \trace(W) \d\mu = \trace(\Cov_\mu)$ in \eqref{eq:minC} removes the trivial invariance of the Poincaré constant $C(\mu , \alpha W) = C(\mu , W)/\alpha $ for any positive scalar $\alpha>0$.
In addition, this normalization is convenient because it yields\footnote{Evaluating the Poincaré inequality \eqref{eq:Poincare_Riemannian} with the affine function $f(x)=x_i$ and summing over $i$ yields $\trace(\Cov_\mu) \leq C(\mu,W)\trace(\int W\d\mu)$ and then \eqref{eq:boundOnC}.} the lower bound
\begin{equation}\label{eq:boundOnC}
  C(\mu,W) \geq 1 .
\end{equation}

{
By the time we completed this work, we became aware of the recent contributions \cite{lelievre2024optimizing} addressing problems similar to \eqref{eq:minC}. Motivated by molecular dynamics simulations, the authors consider a probability distribution $\mu$ defined on the torus $\mathbb{T}^d=(\R\backslash\mathbb{Z})^d$ to account for the periodic boundary conditions of molecular systems. Additionally, instead of the normalization $\int \trace(W) \d\mu = \trace(\Cov_\mu)$ in \eqref{eq:EquivalentSDE_WPI}, they impose an $L^p$-type constraint $\int \|W(x)\|_F^p e^{-p V(x)} \d x \leq 1$ for some $p\geq1$. These differences result in significantly different approaches and solution algorithms.
}

Our contributions are the following. {First, in Section \ref{sec:ExitenceViaMomentMap}, we show that the optimization problem \eqref{eq:minC} admits a closed-form solution using the notion of the moment measure \cite{cordero2015moment}. This solution is given by
\begin{equation}
 W(x)=\text{Hess\,}\varphi( \nabla \varphi^{-1}(x)) ,
\end{equation}
for some strongly convex function $\varphi$ that depends on $\mu$. While such objects have been studied in the functional analysis literature \cite{klartag2014logarithmically,kolesnikov2016riemannian,klartag2013poincare}, their practical computation remains an open problem, which we address in the present work.
}
Then, in Section \ref{sec:OptimalMatrixField}, we provide characterizations and properties of the optimal metric $W(x)$, showing that they are necessarily symmetric positive-semidefinite \emph{Stein kernels} and relating them to the central limit theorem, see \cite{cacoullos1994variational,stein1986approximate}.
{Remarkably, $W$ being a Stein Kernel simplifies the preconditioned Langevin dynamics \eqref{eq:SDE_matrixWeighted} as follow
\begin{equation}
\d X_t = -(X_t-m) \d t + \sqrt{2W(X_t)} \d B_t .
\end{equation}
Such \emph{gradient-free} dynamics which does not envolve the term $\nabla V$ are receiving growing attention, see \cite{engquist2024adaptive,engquist2024sampling}.
}
Next, we show in Section \ref{sec:Convex} that \eqref{eq:minC} can be formulated as a concave optimization problem on the spectrum of the diffusion operator associated with the Riemannian Langevin dynamics \eqref{eq:SDE_matrixWeighted}.
In Section \ref{sec:NumericalSolution}, we propose a gradient-based algorithm to numerically solve \eqref{eq:minC} using the finite element method.
In Section \ref{sec:Applications}, we demonstrate the proposed algorithms on four benchmarks of dimension $d=2$. In particular, we show the benefit of using the preconditioned Langevin dynamic \eqref{eq:SDE_matrixWeighted} in the numerical discretization.

\section{Existence of optimal metric}\label{sec:ExitenceViaMomentMap}

The existence of a solution to \eqref{eq:minC} can be established using \emph{moment measures} \cite{cordero2015moment,santambrogio2016dealing}.
A probability measure $\mu$ is said to be the moment measure of a local Lipschitz convex function $\varphi : \R^d \rightarrow \R \cup\{+\infty\}$ if $\mu$ is the pushforward measure of $e^{-\varphi(z)}\d z$ under the transformation $z\mapsto \nabla \varphi(z)$, meaning
\begin{equation}\label{eq:MomentMap}
  \int f(x) \d\mu(x) = \int f(\nabla \varphi(z)) e^{-\varphi(z)} \d z ,
\end{equation}
for any $\mu$-integrable function $f$.
The convexity of $\varphi$ ensures that the map $z\mapsto \nabla \varphi(z)$ is invertible and the measure $e^{-\varphi(z)}\d z$ is log-concave.
From the perspective of the optimal transport theory, the invertible map $\nabla \varphi$ is the Brenier map that pushes forward the source measure $e^{-\varphi(z)}\d z$ to $\mu$ and uniquely minimizes the quadratic cost $\int \|z-\nabla \varphi(z)\|^2 e^{-\varphi(z)}\d z$.

Theorem 2 in \cite{cordero2015moment} shows that any probability measure $\mu$ is a moment measure if it is not supported on a hyperplane, and satisfies $\int |x|\d\mu<\infty$ and $\int x\d\mu=0$. Moreover, the corresponding \emph{moment map} $\varphi$ is unique up to translation.
The centred assumption on $\mu$ is a mild one: if $m=\int x\d\mu\neq 0$, it is sufficient to replace $\mu$ with the translated measure $\d\mu'(x)=\d\mu(x-m)$ to satisfy this assumption.

\begin{theorem}[Existence of a solution to \eqref{eq:minC}]\label{th:WstartMomentMap}
 Assume that $\mu$ is the moment measure of a strictly convex function $\varphi$ which is twice continuously differentiable. Then the field of symmetric positive-semidefinite matrices
 \begin{equation}\label{eq:WstartMomentMap}
  W(x) = \text{Hess}\,\varphi ( (\nabla \varphi)^{-1}(x) ),
 \end{equation}
 is a solution to \eqref{eq:minC} with $C(\mu,W)=1$.
\end{theorem}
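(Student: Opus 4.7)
The plan is to exploit the moment-map representation of $\mu$ as a pushforward of the log-concave density $e^{-\varphi(z)}\d z$ under the Brenier map $z\mapsto\nabla\varphi(z)$. The whole statement will then reduce to the classical Brascamp--Lieb inequality applied to this log-concave source measure, together with the lower bound \eqref{eq:boundOnC}.

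First, I would verify that the proposed $W$ satisfies the normalization constraint in \eqref{eq:minC}. Using \eqref{eq:MomentMap} with $f(x) = \trace(\text{Hess}\,\varphi((\nabla\varphi)^{-1}(x)))$ yields
\begin{equation*}
  \int \trace(W(x)) \d\mu(x) = \int \trace(\text{Hess}\,\varphi(z))\, e^{-\varphi(z)}\d z = \int \div(\nabla\varphi(z))\, e^{-\varphi(z)}\d z .
\end{equation*}
An integration by parts (whose boundary terms vanish thanks to the decay of $e^{-\varphi}$ implied by strict convexity) turns this into $\int \|\nabla\varphi(z)\|^2 e^{-\varphi(z)}\d z$, which, by the pushforward identity \eqref{eq:MomentMap} applied to $f(x)=\|x\|^2$ and the centering $\int x \d\mu = 0$ built into the moment-measure framework, equals $\int \|x\|^2 \d\mu(x) = \trace(\Cov_\mu)$. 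This is exactly the required normalization.

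Second, I would show $C(\mu,W)\leq 1$. Given a sufficiently smooth test function $f:\R^d\to\R$, introduce its pullback $g(z) = f(\nabla\varphi(z))$. By \eqref{eq:MomentMap} applied to $f$ and $f^2$, one has $\Var_\mu(f) = \Var_\nu(g)$ where $\nu$ is the probability measure proportional to $e^{-\varphi(z)}\d z$. Since $\nabla g(z) = \text{Hess}\,\varphi(z) \nabla f(\nabla\varphi(z))$, another application of the pushforward identity gives
\begin{equation*}
  \int \nabla f(x)^\top W(x) \nabla f(x) \d\mu(x) = \int \nabla g(z)^\top (\text{Hess}\,\varphi(z))^{-1} \nabla g(z)\, e^{-\varphi(z)}\d z .
\end{equation*}
Because $\varphi$ is strictly convex, the Brascamp--Lieb inequality applied to $\nu$ with the potential $\varphi$ bounds the right-hand side from below by $\Var_\nu(g)=\Var_\mu(f)$. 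Hence the Riemannian Poincaré inequality \eqref{eq:Poincare_Riemannian} holds with constant $1$, so $C(\mu,W)\leq 1$.

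Finally, combining this with the universal lower bound \eqref{eq:boundOnC}, which holds under the normalization just verified, forces $C(\mu,W) = 1$ and shows that $W$ is a minimizer of \eqref{eq:minC}. The only delicate point is the integration by parts in step one, which requires mild regularity/decay of $\varphi$ at infinity; apart from that, the argument is essentially a change of variables plus an invocation of Brascamp--Lieb, and the algebraic matching between the Hessian appearing in Brascamp--Lieb and the metric $W(x)=\text{Hess}\,\varphi((\nabla\varphi)^{-1}(x))$ is precisely what makes the bound saturate.
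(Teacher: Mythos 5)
Your proof is correct and follows essentially the same route as the paper: pull the test function back through the Brenier map $\nabla\varphi$, apply the Brascamp--Lieb inequality to the log-concave source measure $e^{-\varphi(z)}\d z$, use the chain rule to convert $(\text{Hess}\,\varphi)^{-1}$ into $\text{Hess}\,\varphi$ evaluated along $\nabla\varphi$, and conclude $C(\mu,W)=1$ by combining with the lower bound \eqref{eq:boundOnC}. The only place you diverge is the normalization check: the paper invokes the fact (Theorem 2.3 of \cite{fathi2019stein}) that $W(x)=\text{Hess}\,\varphi((\nabla\varphi)^{-1}(x))$ is a Stein kernel and tests it on affine functions, which yields the full matrix identity $\int W\d\mu=\Cov_\mu$, whereas you prove only the trace identity $\int\trace(W)\d\mu=\trace(\Cov_\mu)$ directly, via the change of variables and the integration by parts $\int \Delta\varphi\, e^{-\varphi}\d z=\int\|\nabla\varphi\|^2 e^{-\varphi}\d z$. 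That is all the constraint in \eqref{eq:minC} requires, so your argument is a valid, self-contained alternative to the citation; the one point to tighten is the justification of the vanishing boundary terms, which should be attributed to the integrability of $e^{-\varphi}$ (forcing at-least-linear growth of the convex $\varphi$) together with finiteness of $\int\|\nabla\varphi\|^2 e^{-\varphi}\d z=\trace(\Cov_\mu)$, rather than to strict convexity per se, which by itself does not imply any decay.
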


\begin{proof}
 We recall that the Brascamp--Lieb inequality \cite{brascamp1976extensions} states that any log-concave measure $\d \nu(z) = e^{-\varphi(z)}\d z$ satisfies $\Var_{\nu}(f) \leq \int  \nabla f^\top  (\text{Hess}\,\varphi)^{-1} \nabla f \d\nu$ for any sufficiently smooth function $f$.
 Following Section 4 in \cite{fathi2019stein}, we can write
 \begin{align*}
  \Var_\mu(f)
  &= \Var_{e^{-\varphi}}(f\circ\nabla\varphi) \\
  &\leq \int (\nabla (f\circ\nabla\varphi) )^\top  \left(\text{Hess}\,\varphi \right)^{-1} \nabla (f\circ\nabla\varphi) e^{-\varphi}\d z\\
  &= \int (\nabla f\circ\nabla\varphi)^\top  \left(\text{Hess}\,\varphi \right)  (\nabla f\circ\nabla\varphi) e^{-\varphi}\d z\\
  &= \int \nabla f^\top  \left(\text{Hess}\,\varphi \circ (\nabla\varphi)^{-1}\right)  \nabla f \d\mu .
 \end{align*}
 Thus, $C(\mu,W)=1$ for $W$ defined in \eqref{eq:WstartMomentMap}.
 Next, we show that $W$ satisfies $\int \trace(W)\d\mu=\trace(\Cov_\mu)$.
 Theorem 2.3 in \cite{fathi2019stein} ensures that $W$ defined in \eqref{eq:WstartMomentMap} is a \emph{Stein kernel}, meaning that $\int xf\d\mu = \int W\nabla f\d\mu$ holds for any $f$.
 Letting $f$ go over all affine functions, we deduce that $\int W\d\mu = \int xx^\top \d\mu = \Cov_\mu$, where we used the fact that, because $\mu$ is a moment measure, it is necessarily centred (see Proposition 1 in \cite{cordero2015moment}). This yields $\int \trace(W)\d\mu=\trace(\Cov_\mu)$ so that, by \eqref{eq:boundOnC} and $C(\mu,W)=1$, we have that $W$ is a solution to \eqref{eq:minC}.
\end{proof}

Theorem \ref{th:WstartMomentMap} ensures the existence (but not the uniqueness) of solutions to the optimization problem \eqref{eq:minC} provided the moment map $\varphi$ is sufficiently smooth. This moment map is the solution to the toric Kähler-Einstein equation
\begin{equation}\label{eq:Kähler-Einstein}
 e^{-\varphi} = \mu(\nabla \varphi) \det(\text{Hess}\,\varphi),
\end{equation}
which can be obtained by applying the change of variable $x=\nabla\varphi(z)$ to the left-hand side of \eqref{eq:MomentMap}.
Note that \eqref{eq:Kähler-Einstein} is a variant of the Monge-Ampère equation \cite{figalli2017monge}.
As shown in \cite{berman2013real}, if $\mu$ is centred, supported on a compact and convex set and has density bounded from above and below by positive constants, then $\varphi$ is smooth enough so that Theorem \ref{th:WstartMomentMap} applies.

In principle, Theorem \ref{th:WstartMomentMap} offers a constructive way to build an optimal metric $W$: one can first numerically solve \eqref{eq:Kähler-Einstein} for $\varphi$ and then compute $W$ via \eqref{eq:WstartMomentMap}. However, the numerical solution to \eqref{eq:Kähler-Einstein} is not an easy task in general, mostly because of the convexity constraint of $\varphi$.
Later on in Section \ref{sec:NumericalSolution}, we propose an alternative way to compute $W$ which does not require solving \eqref{eq:Kähler-Einstein} for $\varphi$. Nonetheless, in the particular case where $\mu$ is a product measure, we have a closed-form expression for the optimal $W$.

\begin{proposition}\label{prop:ProductMeasure}
 Let $\mu=\mu_1\otimes\hdots\otimes \mu_L$ be a product probability measure on $\R^d$ where each $\mu_\ell$ is a marginal probability measure on $\R^{d_\ell}$ with $d=\sum_{\ell=1}^L d_\ell$. We denote $\tau_\ell = (1+\sum_{i=1}^{\ell-1}d_i, \ldots, \sum_{i=1}^{\ell}d_i)$ a multi-index with cadinality $d_\ell$ and $x_{\tau_\ell}$ the vector containing the coordinates of $x$ associated with $\mu_\ell$.
 Assume that, for any $1\leq \ell\leq L$, there exists a symmetric positve-semidefinite matrix field $W_\ell:\R^{d_\ell}\rightarrow\mathcal{S}_{+}^{d_\ell}$ such that $C(\mu_\ell,W_\ell)=1$ with $\int\trace(W_\ell)\d\mu_\ell=\trace(\Cov_{\mu_\ell})$. Then, the block-diagonal metric
 \begin{equation}\label{eq:ProductMeasure}
  W(x) =
 \begin{pmatrix}
  W_1(x_{\tau_1}) & & 0 \\
  &\ddots & \\
  0 &&W_L(x_{\tau_L})
 \end{pmatrix},
 \end{equation}
 is a solution to \eqref{eq:minC}. For any $\tau_\ell$ with cardinality one, i.e., $d_\ell=1$, suppose the corresponding $\mu_\ell$ has connected support on $\R$ with $\Cov_{\mu_{\ell}} = \int(x_{\tau_\ell}-m_{\tau_\ell})^2\d\mu <\infty$, then we have the closed-form expression
 \begin{equation}\label{eq:SteinKernel1D}
  W_\ell(x_{\tau_\ell}) = \frac{1}{\mu_{\tau_\ell}(x_{\tau_\ell})}\int_{x_{\tau_\ell}}^\infty (t - m_{\tau_\ell})\d\mu_\ell(t) .
 \end{equation}
\end{proposition}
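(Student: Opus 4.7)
The proposition has two claims, and I would treat them in order.

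For the block-diagonal construction \eqref{eq:ProductMeasure}, the plan is the classical tensorization argument for Poincaré inequalities on product measures. Starting from the well-known inequality
\begin{equation*}
\Var_\mu(f) \leq \sum_{\ell=1}^L \int \Var_{\mu_\ell}\bigl(x_{\tau_\ell}\mapsto f(x)\bigr) \, \d\mu(x),
\end{equation*}
in which the inner variance is taken over the block $x_{\tau_\ell}$ with the remaining coordinates frozen, I would apply the hypothesis $C(\mu_\ell,W_\ell)=1$ to each conditional variance to dominate it by $\int (\nabla_{x_{\tau_\ell}}f)^\top W_\ell (\nabla_{x_{\tau_\ell}}f)\,\d\mu_\ell$. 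Summing over $\ell$ and exploiting the block-diagonal structure of $W$ collapses the sum into $\int \nabla f^\top W \nabla f\,\d\mu$, yielding $C(\mu,W)\leq 1$. The matching lower bound is the universal estimate \eqref{eq:boundOnC}. The normalization constraint is then immediate: $\int\trace(W)\d\mu = \sum_\ell \int\trace(W_\ell)\d\mu_\ell = \sum_\ell \trace(\Cov_{\mu_\ell}) = \trace(\Cov_\mu)$, where the last equality uses the fact that $\Cov_\mu$ is itself block-diagonal under a product measure.

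For the univariate closed form \eqref{eq:SteinKernel1D}, my plan is to verify directly that the explicit $W_\ell$ satisfies the two defining properties of an optimal metric. First, setting $g(x) = \int_x^\infty (t - m_{\tau_\ell}) \d\mu_\ell(t)$, one has $g(\pm\infty)=0$ and $g'(x) = -(x-m_{\tau_\ell})\mu_\ell(x)$, which changes sign only at $x = m_{\tau_\ell}$; hence $g \geq 0$ and $W_\ell \geq 0$ on the support. Next, a single integration by parts on $\int f'(x) W_\ell(x) \d\mu_\ell(x) = \int f'(x) g(x) \d x$ produces the Stein identity
\begin{equation*}
\int f'(x) W_\ell(x) \d\mu_\ell(x) = \int f(x)(x - m_{\tau_\ell}) \d\mu_\ell(x),
\end{equation*}
for $f$ of suitable growth at infinity. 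Applied to $f(x) = x$, this yields $\int W_\ell \d\mu_\ell = \Cov_{\mu_\ell}$, matching the trace constraint.

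The main obstacle is then establishing $C(\mu_\ell, W_\ell) = 1$ for this explicit $W_\ell$. My plan is to invoke Theorem \ref{th:WstartMomentMap}: any centered univariate probability measure with connected support and finite first moment is a moment measure (Proposition 1 in \cite{cordero2015moment}), so Theorem \ref{th:WstartMomentMap} produces, under mild regularity on $\mu_\ell$, an optimal metric achieving Poincaré constant $1$ that is, by construction, a scalar Stein kernel. The Stein identity derived above uniquely determines a non-negative scalar $W_\ell \mu_\ell$ on the interior of the support as the antiderivative of $-(x-m_{\tau_\ell})\mu_\ell$ vanishing at the endpoints, so this optimal metric must coincide with \eqref{eq:SteinKernel1D}. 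If a self-contained argument is preferred, one may alternatively establish the univariate Stein--Poincaré inequality directly from the representation $f - E_{\mu_\ell}[f] = F'/\mu_\ell$ with $F(x) = \int_{-\infty}^x (f - E_{\mu_\ell}[f])\d\mu_\ell$, combined with a Cauchy--Schwarz step that pairs $F/\sqrt{W_\ell\mu_\ell}$ against $\sqrt{W_\ell\mu_\ell}\,f'$; this avoids any smoothness assumption on $\varphi$ but requires a Hardy-type control of $\int F^2/(W_\ell\mu_\ell)\d x$ that is the technically delicate step.
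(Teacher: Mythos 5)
Your block-diagonal argument is essentially the paper's: tensorization of variance over the product structure, the one-block bound $C(\mu_\ell,W_\ell)=1$ applied conditionally, the universal lower bound \eqref{eq:boundOnC}, and a direct check of the trace normalization. The paper compresses the tensorization step into a citation of Proposition 4.3.1 of \cite{bakry2014analysis}, but this is the same mechanism and your spelled-out version is correct. Your verifications that $g(x)=\int_x^\infty(t-m_{\tau_\ell})\,\d\mu_\ell(t)\geq 0$ (so $W_\ell\geq 0$), that a single integration by parts gives the Stein identity, and that $\int W_\ell\,\d\mu_\ell=\Cov_{\mu_\ell}$, also match the paper's calculations.

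The genuine gap is in the step establishing $C(\mu_\ell,W_\ell)=1$ for the explicit one-dimensional $W_\ell$. Your primary route invokes Theorem \ref{th:WstartMomentMap}, which requires the moment map $\varphi$ of $\mu_\ell$ to be strictly convex and twice continuously differentiable. The proposition assumes only that $\mu_\ell$ has connected support on $\R$ with finite variance; nothing implies the smoothness of $\varphi$, and your own hedge (``under mild regularity on $\mu_\ell$'') is exactly the extra hypothesis you would need to add and that the statement does not grant. The uniqueness reasoning for the scalar Stein kernel is fine \emph{once} Theorem \ref{th:WstartMomentMap} applies, but you have not earned the right to apply it. Your self-contained alternative --- writing $\Var_{\mu_\ell}(f)=-\int f'G$ and applying Cauchy--Schwarz against $\sqrt{W_\ell\mu_\ell}$ --- correctly reduces the claim to the Hardy-type estimate $\int G^2/(W_\ell\mu_\ell)\,\d x\leq\Var_{\mu_\ell}(f)$, but that bound \emph{is} the content of the theorem and you leave it unproved. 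The paper closes this gap simply by citing Theorem 6 of \cite{saumard2019weighted}, which proves $C(\mu_\ell,W_\ell)=1$ under precisely the connected-support and finite-variance hypotheses, with no smoothness assumption on a moment map.
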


\begin{proof}
 Because $\int\trace(W_\ell)\d\mu_\ell=\trace(\Cov_{\mu_\ell})$, we have $\int \trace(W)\d\mu = \sum_{\ell=1}^L\trace(\Cov_{\mu_\ell}) =\trace(\Cov_\mu)$. By the stability of Poincaré inequalities under tensorization, e.g., see Proposition 4.3.1 in \cite{bakry2014analysis}, we have
 $C(\mu,W) = \max_\ell\{ C(\mu_\ell,W_{\ell})\} = 1$, because $C(\mu_\ell,W_\ell)=1$. This way, $W$ is a solution to \eqref{eq:minC}.
 Finally, for the case $d_\ell=1$, Theorem 6 in \cite{saumard2019weighted} ensures that $W_\ell$ defined in \eqref{eq:SteinKernel1D} satisfies $C(\mu_\ell,W_\ell)=1$.
 It remains to check that $\int\trace(W_\ell)\d\mu_\ell = \trace(\Cov_{\mu_\ell})$. Applying integration by part, we have
 $$
  \int W_\ell \d\mu_\ell = \int_{-\infty}^{+\infty} \left(\int_{x_{\tau_\ell}}^\infty (t - m_{\tau_\ell})\d\mu_\ell(t)\right)\d x_{\tau_\ell} = \int_{-\infty}^{+\infty} (x_{\tau_\ell} - m_{\tau_\ell})^2  \d\mu_\ell(x_{\tau_\ell}) = \Cov_{\mu_\ell},
 $$
 which concludes the proof.
\end{proof}

We give a few examples of measures where \eqref{eq:ProductMeasure} and \eqref{eq:SteinKernel1D} have closed-form expressions. We refer to \cite[Section 3.1]{saumard2019weighted} for in-depth discussion.
\begin{example}\label{ex:ProductMeasure}
For the standard Gaussian measure $\d\mu_\mathcal{N}(x)\propto\prod_{i=1}^d\exp(-|x_i|^2/2)\d x$ on $\R^d$, we have
$$
 W_\mathcal{N}(x) =
 \begin{pmatrix}
  1 & & 0 \\
  &\ddots & \\
  0 && 1
 \end{pmatrix}.
$$
For the Laplace measure $\d\mu_\mathcal{L}(x)\propto\prod_{i=1}^d\exp(-|x_i|)\d x$ on $\R^d$, we have
$$
\qquad
 W_\mathcal{L}(x) =
 \begin{pmatrix}
  1+|x_1| & & 0 \\
  &\ddots & \\
  0 && 1+|x_d|
 \end{pmatrix}.
$$
The generalized Cauchy distribution $\d\mu_\mathcal{C}\propto\prod_{i=1}^d(1+|x_i|^2)^{-\beta}\d x$ on $\R^d$ with parameter $\beta>1$ yields
$$
 W_\mathcal{C}(x) = \frac{1}{2(\beta-1)}
 \begin{pmatrix}
  1+|x_1|^2 & & 0 \\
  &\ddots & \\
  0 && 1+|x_d|^2
 \end{pmatrix}.
$$
\end{example}

The following corollary characterizes the optimal metric for sums of independent random variables.
\begin{corollary}
 Let $X,Y$ taking values in $\R^d$ be two independent random vectors and consider $Z=\alpha X+\beta Y$ for some $\alpha,\beta\in\R$.
 Assume there exists optimal metrics $W_X$ and $W_Y$ such that $C(\mu_X,W_X)=C(\mu_Y,W_Y)=1$, as well as $\int\trace(W_X)\d\mu_X=\trace(\Cov(X))$ and $\int\trace(W_Y)\d\mu_Y=\trace(\Cov(Y))$. Then, the conditional expectation
 $$
  W_{Z}(z) = \E\left[ \alpha^2 W_X(X) + \beta^2 W_Y(Y) | Z=z \right] ,
 $$
 is also an optimal metric for $Z$.
 In particular, for $\overline{X}_N=\frac{1}{\sqrt{N}}\sum_{i=1}^N X_i$ where $X_1\hdots,X_N$ are $N$ independent copies of $X$, we have
 \begin{equation}\label{eq:Wiid}
  W_{\overline{X}_N}(x) =  \E\left[\left. \frac{W_X(X_1) + \hdots + W_X(X_N)}{N} \right| \overline{X}_N = x \right] .
 \end{equation}
\end{corollary}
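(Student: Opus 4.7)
The plan is to derive the weighted Poincaré inequality for $\mu_Z$ from the one for the product measure $\mu_X\otimes\mu_Y$ by pushforward under the linear map $T:(x,y)\mapsto \alpha x+\beta y$. Since $X$ and $Y$ are independent, $T$ pushes $\mu_X\otimes\mu_Y$ to $\mu_Z$, and Proposition \ref{prop:ProductMeasure} applied to $L=2$ provides an optimal block-diagonal metric $\widetilde W(x,y) = \mathrm{diag}(W_X(x),W_Y(y))$ on $\R^{2d}$ with $C(\mu_X\otimes\mu_Y,\widetilde W)=1$.

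For any smooth $f:\R^d\to\R$, set $g(x,y)=f(\alpha x+\beta y)$, so that $\Var_{\mu_Z}(f)=\Var_{\mu_X\otimes\mu_Y}(g)$. The chain rule gives $\nabla_x g=\alpha\,\nabla f(z)$ and $\nabla_y g=\beta\,\nabla f(z)$ with $z=\alpha x+\beta y$, hence
\begin{equation*}
  \nabla g(x,y)^\top \widetilde W(x,y)\,\nabla g(x,y) = \nabla f(z)^\top\bigl[\alpha^2 W_X(x)+\beta^2 W_Y(y)\bigr]\nabla f(z).
\end{equation*}
Integrating against $\mu_X\otimes\mu_Y$ and conditioning on $Z=z$ yields exactly
\begin{equation*}
  \Var_{\mu_Z}(f) \le \int \nabla f(z)^\top W_Z(z) \nabla f(z)\, \d\mu_Z(z),
\end{equation*}
so $C(\mu_Z,W_Z)\le 1$. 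Combined with the universal lower bound $C(\mu_Z,W_Z)\ge 1$ from \eqref{eq:boundOnC}, we get $C(\mu_Z,W_Z)=1$ provided the trace normalization holds.

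To check the normalization, take traces and use the tower property: $\int\trace(W_Z)\d\mu_Z = \alpha^2\int\trace(W_X)\d\mu_X + \beta^2\int\trace(W_Y)\d\mu_Y = \alpha^2\trace(\Cov(X))+\beta^2\trace(\Cov(Y)) = \trace(\Cov(Z))$, where the last equality uses independence of $X$ and $Y$. The iid consequence \eqref{eq:Wiid} follows by induction on $N$: once $W_{\overline{X}_{N-1}}$ is optimal with the correct normalization, apply the general claim to $\overline{X}_N = \sqrt{\tfrac{N-1}{N}}\,\overline{X}_{N-1} + \tfrac{1}{\sqrt N} X_N$ and simplify the resulting conditional expectation using exchangeability, which collapses it to the symmetric average in \eqref{eq:Wiid}.

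The main obstacle is mostly cosmetic: ensuring the conditional expectation $W_Z(z)$ defines a measurable PSD matrix field almost surely, and that the integration-by-conditioning step is rigorous (this just requires $W_X,W_Y$ to be $\mu_X$- and $\mu_Y$-integrable, which follows from the trace-normalization hypothesis). No delicate analytic estimate is needed; the argument is really a soft tensorization-plus-pushforward identity.
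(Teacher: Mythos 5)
Your proof is correct and follows essentially the same route as the paper: apply Proposition \ref{prop:ProductMeasure} to the product measure $\mu_X\otimes\mu_Y$, test the Riemannian Poincaré inequality on $g(x,y)=f(\alpha x+\beta y)$, condition on $Z$, check the trace normalization via independence, and obtain \eqref{eq:Wiid} by recursion. You merely make explicit two steps the paper leaves implicit (the conditioning that turns the quadratic form into $W_Z$, and the exchangeability/conditional-independence argument in the induction), which is fine.
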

\begin{proof}
 Let $\mu=\mu_X\otimes\mu_Y$ be the measure of the random vector $(X,Y)$.
 By Proposition \ref{prop:ProductMeasure} we have an optimal metric  $\text{diag}(W_X(X),W_Y(Y))$ for $(X,Y)$.
 Thus, applying the Riemannian Poincaré inequality \eqref{eq:Poincare_Riemannian} with $f(x,y) = h(\alpha x + \beta y)$ yields
 \begin{align*}
  \Var(h(Z))
  &=  \Var(f(X,Y)) \\
  &\leq \E\left[ \nabla f(X,Y)^\top \text{diag}(W_X(X),W_Y(Y)) \nabla f(X,Y) \right] \\
  &= \E\left[ \alpha^2 \nabla h(Z)^\top W_X(X)\nabla h(Z)^\top + \beta^2  \nabla h(Z)^\top W_Y(Y)\nabla h(Z)^\top \right] \\
  &= \E\left[  \nabla h(Z)^\top \left( \alpha^2 W_X(X) + \beta^2  W_Y(Y) \right)\nabla h(Z)^\top \right].
 \end{align*}
 We conclude that $C(\mu_Z,W_Z)=1$. Furthermore, using the independence of $X$ and $Y$, we have $\int\trace(W_Z)\d\mu_Z = \alpha^2 \trace \Cov(X) + \beta^2 \Cov(Y) = \Cov(Z)$. We conclude that $W_Z$ is an optimal metric for $Z$. Then,  \eqref{eq:Wiid} follows by recursion and concludes the proof.
\end{proof}

\section{Characterization and properties of optimal metrics}\label{sec:OptimalMatrixField}

We start with a simple characterization of the solutions to the optimization problem \eqref{eq:minC} with $C(\mu,W)=1$.

\begin{theorem}\label{th:OptimalityCondition}
 Let $W:\R^d\rightarrow\mathcal{S}_+^d$ satisfy $\int \trace(W)\d\mu = \trace(\Cov_\mu)$. Then, $C(\mu,W)=1$ if and only if the Poincaré inequality \eqref{eq:Poincare_Riemannian} becomes an equality for all affine functions, that is
 \begin{equation}\label{eq:AffineFunctionsSaturatesPI}
  \Var_\mu(f) =  C(\mu,W) \int  \nabla f^\top W \nabla f\d\mu, \qquad \forall f: \text{ affine function}.
 \end{equation}
 In this case we have
  \begin{equation}\label{eq:intW=Cov}
   \int W\d\mu = \Cov_\mu .
  \end{equation}
\end{theorem}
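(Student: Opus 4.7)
The plan is to observe that the inequality \eqref{eq:Poincare_Riemannian} applied to an affine function $f(x)=a^\top x$ becomes a simple inequality between quadratic forms: indeed $\nabla f=a$ gives $\Var_\mu(f)=a^\top \Cov_\mu\, a$ and $\int \nabla f^\top W\nabla f \,\d\mu=a^\top (\int W\d\mu) a$, so that \eqref{eq:Poincare_Riemannian} is equivalent to the Loewner inequality
\[
\Cov_\mu \ \preceq\ C(\mu,W)\int W\d\mu.
\]
Everything will follow from this observation combined with the trace normalization $\int\trace(W)\d\mu=\trace(\Cov_\mu)$.

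For the forward implication, assume $C(\mu,W)=1$. Taking the trace of the Loewner inequality above yields
\[
\trace(\Cov_\mu)\ \leq\ \trace\!\left(\int W\d\mu\right)=\int\trace(W)\d\mu = \trace(\Cov_\mu),
\]
so the positive-semidefinite matrix $\int W\d\mu-\Cov_\mu$ has zero trace and therefore vanishes, giving \eqref{eq:intW=Cov}. Plugging $\int W\d\mu=\Cov_\mu$ back into the quadratic-form identity for an arbitrary affine $f(x)=a^\top x+b$ produces
\[
\Var_\mu(f)=a^\top\Cov_\mu a=a^\top\!\left(\int W\d\mu\right)\! a=C(\mu,W)\int\nabla f^\top W\nabla f\,\d\mu,
\]
which is exactly \eqref{eq:AffineFunctionsSaturatesPI}.

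For the reverse implication, assume equality \eqref{eq:AffineFunctionsSaturatesPI} holds for all affine $f$. Testing against the coordinate functions $f(x)=x_i$ and summing over $i=1,\dots,d$ gives
\[
\trace(\Cov_\mu)=\sum_{i=1}^d\Var_\mu(x_i)=C(\mu,W)\sum_{i=1}^d\int W_{ii}\d\mu=C(\mu,W)\,\trace(\Cov_\mu).
\]
Provided $\mu$ is not a Dirac mass (in which case the statement is trivial), $\trace(\Cov_\mu)>0$ and we conclude $C(\mu,W)=1$.

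The argument is essentially a two-line linear-algebra manipulation, so there is no serious obstacle; the only subtle point worth stressing is the step \textquotedblleft PSD matrix with zero trace is zero\textquotedblright{} that produces \eqref{eq:intW=Cov} from the Loewner inequality together with the trace normalization.
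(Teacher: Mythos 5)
Your proof is correct and rests on the same underlying observation as the paper's: applying the Poincaré inequality to linear $f$ gives the Loewner bound $\Cov_\mu\preceq C(\mu,W)\int W\d\mu$, and then the trace normalization forces equality. The only real difference is one of exposition in the forward direction: you derive $\int W\d\mu=\Cov_\mu$ \emph{first}, using the clean fact that a positive-semidefinite matrix with zero trace is zero, and then read off the affine equality \eqref{eq:AffineFunctionsSaturatesPI}; the paper instead argues by contradiction (assuming a strict inequality in one direction $\alpha$, extending $\alpha/\|\alpha\|$ to an orthonormal basis and summing) to establish \eqref{eq:AffineFunctionsSaturatesPI} first, then deduces \eqref{eq:intW=Cov} as a corollary. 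Your packaging avoids the orthonormal-extension step and makes the role of positive-semidefiniteness more transparent, while being mathematically equivalent. The reverse direction is identical to the paper's (test against coordinate functions and use the trace normalization); your aside about the Dirac-mass degenerate case is a harmless extra safeguard the paper leaves implicit.
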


\begin{proof}
 First, we assume that the identity in \eqref{eq:AffineFunctionsSaturatesPI} holds. Choosing $f(x) = x_i$ in \eqref{eq:AffineFunctionsSaturatesPI} and summing over $1\leq i \leq d$ yield $\trace(\Cov_\mu) = C(\mu,W)\trace(\int W\d\mu)$, and thus $C(\mu,W)=1$.
 Next, we assume $C(\mu,W)=1$ and show \eqref{eq:AffineFunctionsSaturatesPI} by contradiction. Assume there exists an affine function $f(x)=\alpha^\top x +\beta$ with $\alpha \in \R^d, \beta \in \R$ such that $\Var_\mu(f) <  \int  \nabla f^\top W \nabla f\d\mu$, which is equivalent to
 $
  \alpha^\top \Cov_\mu\alpha < \alpha^\top \left(\int  W \d\mu\right)\alpha.
 $
 Define $u_1=\alpha/\|\alpha\|$ and choose vectors $u_2,\dots,u_d$ to form a unitary matrix $U=[u_1,\hdots,u_d]\in\R^{d\times d}$. We have
 $$
  \trace(\Cov_\mu)
  = \sum_{i=1}^d u_i^\top  \Cov_\mu u_i
  <
  \sum_{i=1}^d u_i^\top \left( \int W\d\mu \right)u_i
  = \trace\left( \int W\d\mu \right),
 $$
 which contradicts $\int\trace(W)\d\mu=\trace(\Cov_\mu)$.
 Finally, the identity in \eqref{eq:intW=Cov} is a direct consequence of \eqref{eq:AffineFunctionsSaturatesPI} with $C(\mu,W)=1$, which concludes the proof.
\end{proof}

Next, we show that any solutions to \eqref{eq:minC} is necessarily a \textit{Stein kernel}.
A Stein kernel of a probability measure $\mu$ is a matrix field $W:\R^d\rightarrow\R^{d\times d}$ (not necessarily symmetric or positive-semidefinite) such that
\begin{equation}\label{eq:SteinKernel}
  \int (x-m) f \d\mu = \int W\nabla f \d\mu  ,
\end{equation}
for any smooth function $f:\R^d\rightarrow\R$, where $m=\int x\d\mu$ is the mean of $\mu$.
{
One notable property of $W$ being a Stein Kernel is that $\div(W) - W\nabla V = x-m$ (by performing one integration by parts on \eqref{eq:SteinKernel}). Consequently, the preconditioned Langevin dynamics \eqref{eq:SDE_matrixWeighted} simplifies as
\begin{equation}
 \d X_t \overset{\eqref{eq:SDE_matrixWeighted} \& \eqref{eq:SteinKernel}}{=} -(X_t-m) \d t + \sqrt{2W(X_t)} \d B_t .
\end{equation}
Remarkably, this dynamic is \emph{gradient-free} in the sense that it no longer involves the gradient of the potential $V$ associated with the measure $\mu$.
}

\begin{theorem}\label{th:Stein}
 If a symmetric positive-semidefinite matrix field $W$ is a solution to the optimization problem \eqref{eq:minC} with $C(\mu,W)=1$, then $W$ is a Stein kernel of $\mu$.
\end{theorem}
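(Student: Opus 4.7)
The plan is to extract first-order information from the equality case of the Riemannian Poincaré inequality at affine functions, which is already established in Theorem \ref{th:OptimalityCondition}. Since affine functions saturate the inequality when $C(\mu,W)=1$, any infinitesimal perturbation of an affine function must satisfy a stationarity condition, and this condition will turn out to be precisely the Stein kernel identity.

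First I would fix an arbitrary vector $\alpha\in\R^d$ and set the affine function $f(x)=\alpha^\top x$, whose gradient is the constant $\alpha$. By Theorem \ref{th:OptimalityCondition}, this $f$ attains equality in \eqref{eq:Poincare_Riemannian}, i.e., $\Var_\mu(f)=\int \alpha^\top W \alpha \,\d\mu$. Next I would introduce a smooth test function $g$ (decaying fast enough so that all integrals below make sense) and apply the Poincaré inequality to the perturbation $f_\varepsilon = f + \varepsilon g$ for $\varepsilon\in\R$. Expanding both sides yields
\begin{equation*}
\Var_\mu(f) + 2\varepsilon\, \Cov_\mu(f,g) + \varepsilon^2 \Var_\mu(g)
\;\leq\;
\int \alpha^\top W\alpha \,\d\mu + 2\varepsilon\!\int \alpha^\top W\nabla g\,\d\mu + \varepsilon^2\!\int \nabla g^\top W\nabla g\,\d\mu.
\end{equation*}
Using the equality at $\varepsilon=0$ to cancel the zeroth-order terms and dividing by $\varepsilon$, then sending $\varepsilon\to 0^+$ and $\varepsilon\to 0^-$ separately, forces the first-order coefficients to coincide:
\begin{equation*}
\Cov_\mu(f,g) = \int \alpha^\top W\nabla g\,\d\mu.
\end{equation*}

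The last step is identification. Since $f(x)=\alpha^\top x$, we have $\Cov_\mu(f,g)=\alpha^\top\! \int (x-m) g \,\d\mu$ (using $\int(x-m)\d\mu=0$). The stationarity identity then reads
\begin{equation*}
\alpha^\top \int (x-m) g\,\d\mu \;=\; \alpha^\top \int W\nabla g \,\d\mu,
\end{equation*}
for every $\alpha\in\R^d$ and every sufficiently smooth $g$. Stripping $\alpha$ gives the Stein kernel identity \eqref{eq:SteinKernel}, which concludes the proof.

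I do not expect any substantial obstacle here: the argument is a textbook first-variation computation around an equality case, and Theorem \ref{th:OptimalityCondition} does the heavy lifting by supplying the equality at affine functions together with $\int W \d\mu=\Cov_\mu$. The only minor technical point worth flagging is to make sure the class of test functions $g$ for which the perturbative expansion is valid (typically smooth and compactly supported, or smooth with suitable decay) is dense enough so that the derived identity extends to all smooth $f$ appearing in the definition \eqref{eq:SteinKernel} of a Stein kernel; this is standard and follows from approximating $g$ in the appropriate weighted Sobolev space associated with $\mu$ and $W$.
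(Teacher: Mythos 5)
Your proposal is correct and follows essentially the same route as the paper: the paper also perturbs an affine function $\alpha^\top x$ by $h f$, uses Theorem \ref{th:OptimalityCondition} (via $\int W\,\d\mu=\Cov_\mu$) to cancel the zeroth-order terms, and lets the perturbation parameter take both signs to force the first-order coefficient to vanish, yielding the Stein identity \eqref{eq:SteinKernel}. Your $\varepsilon$-expansion with test function $g$ is the same first-variation computation up to notation.
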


\begin{proof}
 By Theorem \ref{th:OptimalityCondition} we have $\Cov_\mu = \int W\d\mu$.
 Then, for any $f:\R^d\rightarrow\R$, $\alpha\in\R^d$ and $h\in\R$, the function $g=\alpha^\top  x + hf$ satisfies
 \begin{align*}
  \Var_\mu( g )&=\alpha^\top  \Cov_\mu \alpha
  +2h \alpha^\top\int (x-m)f\d\mu
  +h^2\Var_\mu(f), \\
  \int \nabla g^\top W \nabla g\d\mu &=
  \alpha^\top  \Cov_\mu \alpha
  + 2 h \alpha^\top  \int W \nabla f \d\mu
  +h^2 \int  \nabla f^\top  W \nabla f \d\mu .
 \end{align*}
 Applying the Riemannian Poincaré inequality \eqref{eq:Poincare_Riemannian} to $g$ yields
 \begin{align*}
  0\leq
  h \alpha^\top \left ( \int W \nabla f \d\mu -\int (x-m)f\d\mu\right)
  +\mathcal{O}(h^2) .
 \end{align*}
 For this to hold for any $h\in\R$ and $\alpha\in\R^d$, it is necessary that $\int W \nabla f \d\mu -\int (x-m)f\d\mu = 0$, which defines $W$ as a Stein kernel.
\end{proof}

Since the seminal work \cite{stein1986approximate}, Stein kernels have been implicitly used in numerous works on Stein's method, and they have received growing attention in several recent investigations, see \cite{courtade2019existence,fang2021high,chernozhuokov2022improved} for instance.
In particular, it is shown in \cite[Proposition 3.1]{fathi2019stein} that the so-called \textit{Stein discrepancy}
$$
 S_p(\mu|\gamma) = \inf_{\substack{W:\text{ Stein kernel}\\\text{ as in \eqref{eq:SteinKernel}}}} \left( \int \|W-I_d\|_F^p\d\mu\right)^{1/p},
$$
measures the divergence from $\mu$ to the Gaussian measure $\gamma=\mathcal{N}(0,I_d)$, which remarkably bounds the Wasserstein-$p$ distance between $\mu$ and $\gamma$ for $p\geq2$, see also \cite{ledoux2015stein} for the original proof for $p=2$.
A notable application of Stein kernels is that they often lead to non-asymptotic central limit theorems (CLT) \cite{barbour2005introduction,chen2021stein}. The following corollary provides such a CLT based on the optimal metric. {It is a variant of Theorem 3.3 in \cite{fathi2019stein}}, and its proof is given in Appendix \ref{proof:CLT}.

\begin{corollary}
\label{cor:CLT}
Let $X_1,\hdots,X_N$ be independent copies of $X\sim\mu$ such that $\E[X]=0$ and $\Cov(X)=I_d$, and let $\mu_N$ be the measure of $\overline{X}_N=\frac{1}{\sqrt{N}}\sum X_i$. Consider the Wasserstein-$p$ distance between $\mu_N$ and $\gamma=\mathcal{N}(0,I_d)$ given by
\[
\mathcal{W}_p(\mu_N,\gamma)=\left(\inf_{\pi\in\Pi(\mu_N,\gamma)} \int \|x-y\|^p\pi(\d x,\d y)\right)^{1/p},
\]
where $\Pi(\mu_N,\gamma)$ denotes the set of couplings between $\mu_N$ and $\gamma$.
Then, for any $p\geq 2$, we have
\[
\mathcal{W}_p(\mu_N,\gamma)\leq \frac{C_{p} d^{1-2/p}}{\sqrt{N}} \left(\int \| W(X)-I_d\|_p^{p} \d\mu \right)^{1/p},
\]
where $W$ is a solution to \eqref{eq:minC} with $C(\mu,W)=1$ and $C_p$ is a constant depending only on $p$ and where $\| \cdot \|_p$ is the matrix norm defined by $\|A\|_p^p = \sum_{ij} A_{ij}^p$.
\end{corollary}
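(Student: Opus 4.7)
The plan is to combine three ingredients: the fact from Theorem \ref{th:Stein} that the optimal metric $W$ is automatically a Stein kernel of $\mu$, the explicit optimal metric for $\overline{X}_N$ furnished by equation \eqref{eq:Wiid}, and a known Stein-to-Wasserstein comparison inequality (the variant of Theorem 3.3 in \cite{fathi2019stein} referenced in the excerpt). First I would invoke Theorem \ref{th:Stein} to identify $W$ as a Stein kernel of $\mu$. Then, by the preceding corollary applied recursively, the matrix field
\[
 W_N(x) \;=\; \E\!\left[\frac{1}{N}\sum_{i=1}^N W(X_i)\,\Big|\, \overline{X}_N = x\right]
\]
is an optimal metric for $\mu_N$ with $C(\mu_N, W_N)=1$, hence by Theorem \ref{th:Stein} again it is a Stein kernel of $\mu_N$. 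Note that under the normalization $\Cov(X)=I_d$, Theorem \ref{th:OptimalityCondition} gives $\int W\d\mu = I_d$, so $W_N - I_d$ is the conditional expectation of the centered average $\frac{1}{N}\sum (W(X_i)-I_d)$.

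Next I would apply the Stein-kernel bound on the Wasserstein-$p$ distance (Fathi et al.): there exists a constant $c_p$ depending only on $p$ such that for any Stein kernel $\widetilde W$ of a probability measure $\nu$ on $\R^d$,
\[
 \mathcal{W}_p(\nu, \gamma) \;\leq\; c_p\, d^{1-2/p}\left(\int \|\widetilde W - I_d\|_p^p\, \d\nu\right)^{1/p} ,
\]
where the $d^{1-2/p}$ factor arises from comparing the entrywise $\ell^p$ matrix norm to the operator/Frobenius norm used in the underlying Stein discrepancy argument. Applying this to $\nu = \mu_N$ and $\widetilde W = W_N$ reduces the claim to bounding $\|W_N - I_d\|_{L^p(\mu_N)}$ by $N^{-1/2}\|W - I_d\|_{L^p(\mu)}$.

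For the latter reduction I would use two classical probabilistic tools. Conditional Jensen's inequality (applied to the convex function $A \mapsto \|A\|_p^p$) gives
\[
 \int \|W_N - I_d\|_p^p\, \d\mu_N \;\leq\; \E\!\left[\left\|\frac{1}{N}\sum_{i=1}^N \bigl(W(X_i) - I_d\bigr)\right\|_p^p\right].
\]
Since the $X_i$ are i.i.d.\ and $\E[W(X_i)-I_d] = 0$, applying the Marcinkiewicz-Zygmund inequality entrywise and summing over the $d^2$ matrix entries (this is where the $\ell^p$ entrywise norm is convenient) yields
\[
 \E\!\left[\left\|\sum_{i=1}^N \bigl(W(X_i)-I_d\bigr)\right\|_p^p\right] \;\leq\; \kappa_p\, N^{p/2}\, \E\bigl[\|W(X_1)-I_d\|_p^p\bigr] ,
\]
for a universal constant $\kappa_p$. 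Dividing by $N^p$ delivers exactly the $N^{-p/2}$ factor that, combined with the Stein-Wasserstein inequality, produces the advertised $\tfrac{1}{\sqrt{N}}$ scaling and the final constant $C_p$.

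The main obstacle, or at least the point requiring the most care, is pinning down the correct form of the Stein-to-Wasserstein comparison with the explicit $d^{1-2/p}$ dimensional dependence; the proof in \cite{fathi2019stein} is stated in a slightly different notational convention (Schatten vs.\ entrywise norms), so one has to either cite their Theorem 3.3 and track the norm conversion constant, or reprove the inequality via the interpolation/semigroup argument used there. Once that comparison is accepted as a black box, the remaining steps (optimality of $W_N$, Jensen, Marcinkiewicz-Zygmund) are routine.
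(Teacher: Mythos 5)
Your proposal is correct and follows essentially the same route as the paper: identify the conditional-expectation field $W_N$ as an optimal metric and hence a Stein kernel of $\mu_N$, invoke the Stein-discrepancy-to-Wasserstein bound of Fathi et al.\ (with the $d^{1-2/p}$ factor coming from the Frobenius-versus-entrywise norm comparison), and then combine conditional Jensen with a classical moment inequality for centered i.i.d.\ sums to extract the $N^{-1/2}$ rate. The only cosmetic difference is that you use the Marcinkiewicz--Zygmund inequality where the paper uses Rosenthal's inequality plus H\"older; both yield the same $N^{p/2}$ bound, so the arguments are interchangeable.
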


Compared to Theorem 3.3 in \cite{fathi2019stein}, the main improvement of Corollary \ref{cor:CLT} is that it does not require $\mu$ to be log-concave. It applies to more general probability measures, for example, those heavy-tail distributions in Example \ref{ex:ProductMeasure}, as long as the optimal metric $W$ for the underlying moment measure $\mu$ satisfies $\E[ \|  W(X)-I_d  \|_F^p ] < \infty$ for the chosen $p$.

Another immediate property of Stein kernels is that they permit to derive a lower bound on the variance of a function. The following corollary is a generalization of the results of \cite{cacoullos1982upper} for dimension $d>1$.

\begin{corollary}
 Let a symmetric positive-semidefinite matrix field $W$ be a solution to \eqref{eq:minC} with $C(\mu,W)=1$.
 Then for any smooth function $f:\R^d\rightarrow\R$ we have
 \begin{equation}\label{eq:BoundOnVariance}
  \left\|\int W\nabla f \d\mu \right\|_{\Cov_\mu^{-1}}^2  \leq \Var_\mu(f) \leq \int \nabla f^\top  W\nabla f \d\mu ,
 \end{equation}
 where $\|v\|_{\Cov_\mu^{-1}}^2 = v^\top \Cov_\mu^{-1} v$.
\end{corollary}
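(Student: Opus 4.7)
The plan is to handle the two inequalities separately. The upper bound $\Var_\mu(f) \leq \int \nabla f^\top W \nabla f \, \d\mu$ is immediate: it is simply the Riemannian Poincaré inequality \eqref{eq:Poincare_Riemannian} evaluated at the optimal constant $C(\mu,W)=1$. So no work is needed there.

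For the lower bound, the key observation is that Theorem \ref{th:Stein} tells us $W$ is a Stein kernel of $\mu$, so we can rewrite the vector $\int W \nabla f \, \d\mu$ as $\int (x-m) f \, \d\mu$ using \eqref{eq:SteinKernel}. Denoting this common vector by $c \in \R^d$, I would then obtain a scalar bound by testing against an arbitrary direction $\alpha \in \R^d$: the affine function $g(x) = \alpha^\top (x-m)$ has covariance $\Cov_\mu(f,g) = \alpha^\top c$ with $f$, and variance $\Var_\mu(g) = \alpha^\top \Cov_\mu \alpha$. The classical Cauchy--Schwarz inequality for covariances gives
\begin{equation*}
 (\alpha^\top c)^2 \leq \Var_\mu(f) \, \alpha^\top \Cov_\mu \alpha.
\end{equation*}

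The final step is to optimize over $\alpha$. Choosing $\alpha = \Cov_\mu^{-1} c$ (assuming $\Cov_\mu$ is invertible; if not, one works on the range of $\Cov_\mu$ and notes that $c$ must lie there by the Stein identity applied to constant-along-degenerate-directions test functions), the left-hand side becomes $(c^\top \Cov_\mu^{-1} c)^2$ and the right-hand side becomes $\Var_\mu(f) \cdot c^\top \Cov_\mu^{-1} c$. Dividing through by $c^\top \Cov_\mu^{-1} c$ (trivial if zero) yields $\|c\|_{\Cov_\mu^{-1}}^2 \leq \Var_\mu(f)$, which is exactly the desired lower bound.

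I do not anticipate any real obstacle here. The only mild subtlety is justifying the use of $\Cov_\mu^{-1}$ in the degenerate case, but this is purely technical and can be sidestepped by interpreting $\|\cdot\|_{\Cov_\mu^{-1}}$ via the Moore--Penrose pseudoinverse on the range of $\Cov_\mu$.
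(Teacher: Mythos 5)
Your proposal is correct and follows essentially the same route as the paper: the upper bound is the Riemannian Poincaré inequality with $C(\mu,W)=1$, and the lower bound comes from Theorem \ref{th:Stein} (Stein kernel identity) combined with Cauchy--Schwarz against linear test directions $\alpha$, then optimizing over $\alpha$ — the paper's supremum over $\{\alpha:\alpha^\top\Cov_\mu\alpha=1\}$ is equivalent to your explicit choice $\alpha=\Cov_\mu^{-1}c$. Your remark on handling a degenerate $\Cov_\mu$ via the pseudoinverse is a minor extra care the paper does not spell out, but it does not change the argument.
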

\begin{proof}
 By Theorem \ref{th:Stein}, $W$ is a Stein kernel of $\mu$.
 The Cauchy--Schwarz inequality yields
 \begin{align*}
  \left|\alpha^\top  \int W\nabla f \d\mu \right|^2
  &\overset{\eqref{eq:SteinKernel}}{=} \left|\alpha^\top \int (x-m) \left(f-\int f\d\mu\right) \d\mu \right|^2
  \leq \left(\alpha^\top \Cov_\mu \alpha \right) \Var_\mu(f) ,
 \end{align*}
 for any $\alpha\in\R^d$. Taking the supremum over $\alpha\in\R^d$ such that $\alpha^\top \Cov_\mu \alpha=1$ yields the left-hand side of \eqref{eq:BoundOnVariance}. The right-hand side of \eqref{eq:BoundOnVariance} is just the Riemannian Poincaré inequality \eqref{eq:Poincare_Riemannian} with $C(\mu,W)=1$.
\end{proof}

We conclude this section with an excursion. Let $f$ be a 1-Lipschitz function so that $\|\nabla f(x)\|\leq1$. The right inequality of \eqref{eq:BoundOnVariance} yields $\Var_\mu(f)\leq \int \lambda_{\max}(W) \| \nabla f\|^2\d\mu \leq \int \lambda_{\max}(W) \d\mu $. Maximizing over the set of 1-Lipschitz function then yields
\begin{equation}\label{eq:BoundOfMaxVarianceOfLip}
  \sup_{\substack{f:\R^d\rightarrow\R \\ \text{1-Lipschitz} }} \Var_\mu(f)
  \leq \int \lambda_{\max}(W)\d\mu.
\end{equation}
This gives an upper bound on the maximal variance of $1$-Lipschitz function, which is known to be a difficult question related to the isoperimetric problem \cite{bobkov1996variance}.

\section{Convex analysis and spectral gap}\label{sec:Convex}

While Theorem \ref{th:OptimalityCondition} gives a necessary and sufficient condition for $C(\mu,W)=1$, it does not provide a practical way to build such an optimal metric $W$. In the following, we discuss how to numerically obtain $W$ using optimization methods. Toward this goal, we introduce the following Sobolev spaces
\begin{align*}
 L^2(\mu) &= \left\{f:\R^d\rightarrow\R: \|f\|^2_{L^2(\mu)} = \int f^2 d\mu <\infty  \right\}, \\
 H^1(\mu,W) &= \left\{f:\R^d\rightarrow\R: \|f\|^2_{H^1(\mu,W)} = \int f^2 \d\mu + \int \nabla f^\top W \nabla f\d\mu <\infty  \right\} .
\end{align*}
The space $H^1(\mu,W)$ corresponds to the largest function space for $f$ for which
the terms $\Var_\mu(f)$ and $\int \nabla f^\top  W \nabla f \d\mu$ of \eqref{eq:Poincare_Riemannian} are both finite. The Poincaré constant $C(\mu,W)$ can thus be expressed as
\begin{equation}\label{eq:inversePoincareConstant}
 \frac{1}{C(\mu,W)} = \inf_{\substack{f\in H^1(\mu,W) \\ \int f \d\mu = 0 }} \frac{\int \nabla f^\top  W \nabla f \d\mu}{\int f^2\d\mu} .
\end{equation}
The next proposition shows that the variational problem \eqref{eq:minC} can be solved by a convex optimization problem.

\begin{proposition}\label{prop:CisConvex}
 The function $W\mapsto C(\mu,W)^{-1}$ is concave over the convex cone of symmetric positive-semidefinite matrix field $W$.
 Furthermore, for any symmetric positive-semidefinite matrix fields $W$ and $W'$ such that $W'\preceq \Omega W$ for some constant $\Omega$, we have
 \begin{equation}\label{eq:optimalityOfW}
  \frac{1}{C(\mu,W')} \leq \frac{1}{C(\mu,W)} + \int\trace\left( (W'-W) G \right)\d\mu,
 \end{equation}
 for any $G\in \mathcal{G}(W)$ where
 \begin{equation}\label{eq:superGradient}
  \mathcal{G}(W) =
  \text{conv}\left\{
  \frac{\nabla u \nabla u^\top}{\int u^2\d\mu} :
   u \in \underset{\substack{ f\in H^1(\mu,W) \\ \int f \d\mu = 0  }}{\text{arg\,min}} \frac{\int \nabla f^\top  W \nabla f \d\mu}{\int f^2\d\mu}  \right\} .
 \end{equation}
 Here $\text{conv}\{\cdot\}$ denotes the convex hull of a set.

\end{proposition}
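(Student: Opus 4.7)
The plan is to exploit the variational formula \eqref{eq:inversePoincareConstant} to express $C(\mu,W)^{-1}$ as a pointwise infimum, over admissible test functions $f$, of quantities that are \emph{linear} in $W$. Concretely, for each fixed $f$ with $\int f \d\mu = 0$ and $\int f^2 \d\mu > 0$, the Rayleigh quotient
\[
\Phi_f(W) := \frac{\int \nabla f^\top W \nabla f \d\mu}{\int f^2 \d\mu}
\]
is linear in $W$. Once one adopts the convention $\Phi_f(W) = +\infty$ whenever $f \notin H^1(\mu,W)$, the infimum in \eqref{eq:inversePoincareConstant} can be taken over a single, $W$-independent class of admissible $f$, and concavity of $W \mapsto C(\mu,W)^{-1}$ on the convex cone of symmetric PSD matrix fields follows immediately from the standard fact that a pointwise infimum of affine functionals is concave.

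For the supergradient inequality \eqref{eq:optimalityOfW}, I will follow a Danskin-style argument. First I treat the generating elements $G_u = \nabla u \nabla u^\top / \int u^2 \d\mu$ of $\mathcal{G}(W)$, where $u$ is any minimizer in \eqref{eq:inversePoincareConstant}. The key observation is that the hypothesis $W' \preceq \Omega W$ is exactly what is needed to guarantee $u \in H^1(\mu,W')$, since $\int \nabla u^\top W' \nabla u \d\mu \leq \Omega \int \nabla u^\top W \nabla u \d\mu < \infty$, so $u$ is admissible for the variational problem defining $C(\mu,W')^{-1}$. Combining this admissibility, the optimality of $u$ at $W$, linearity of $\Phi_u$ in its argument, and the cyclic identity $\nabla u^\top M \nabla u = \trace(M \nabla u \nabla u^\top)$, one obtains
\[
\frac{1}{C(\mu,W')} \leq \Phi_u(W') = \Phi_u(W) + \Phi_u(W'-W) = \frac{1}{C(\mu,W)} + \int \trace((W'-W)G_u)\d\mu.
\]
The extension to an arbitrary $G \in \mathcal{G}(W)$ is then immediate: for a convex combination $G = \sum_k \lambda_k G_{u_k}$ with $\lambda_k \geq 0$ and $\sum_k \lambda_k = 1$, weighting the $k$-th inequality by $\lambda_k$ and summing preserves the constant $1/C(\mu,W)$ and reproduces the linear trace term with $G$ in place of $G_u$.

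The main technical obstacle I foresee is justifying that the set of minimizers in \eqref{eq:superGradient} is nonempty, so that $\mathcal{G}(W)$ is well-defined. This is a spectral-theory question: $1/C(\mu,W)$ is the bottom nonzero eigenvalue of the weighted Laplacian generating the Dirichlet form on the right-hand side of \eqref{eq:inversePoincareConstant}, and it is attained whenever this operator has compact resolvent on $\{f\in L^2(\mu):\int f \d\mu=0\}$, which holds under mild regularity assumptions on $\mu$ and $W$. Should one wish to avoid such assumptions, an essentially identical argument goes through with $\varepsilon$-minimizers in place of exact minimizers and a passage to the limit $\varepsilon \to 0$; no other step of the argument above requires substantial modification.
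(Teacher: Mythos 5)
Your proposal is correct and follows essentially the same route as the paper's proof. For concavity you package the paper's Sobolev-space inclusion argument as the standard ``pointwise infimum of affine functionals is concave,'' absorbing the admissibility constraint $f \in H^1(\mu,W)$ into the extended-real convention $\Phi_f(W)=+\infty$; this is a cleaner framing but encodes the same computation the paper performs explicitly (the inequalities $\|f\|_{H^1(\mu,W_t)}^2 \geq t\|f\|_{H^1(\mu,W_1)}^2$ etc.\ become ``if $\Phi_f(W_t)<\infty$ then $\Phi_f(W_1),\Phi_f(W_0)<\infty$''). The supergradient step is identical: $W' \preceq \Omega W$ gives $H^1(\mu,W)\subseteq H^1(\mu,W')$, the minimizer $u$ is admissible at $W'$, linearity of $\Phi_u$ gives the bound, and convex combinations extend it to all of $\mathcal{G}(W)$. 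Your closing worry about nonemptiness of the minimizer set is addressed in the paper by Proposition~\ref{prop:EigendecompositionOfLW}, but it is in any case moot for the statement as written, since the conclusion is quantified ``for any $G\in\mathcal{G}(W)$'' and so holds vacuously when $\mathcal{G}(W)=\emptyset$.
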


\begin{proof}
 For any symmetric positive-semidefinite matrix field $W_0,W_1$ and any $0<t<1$, the matrix field $W_t=tW_1+(1-t)W_0$ is a symmetric positive-semidefinite.
 By construction, for any $f\in H^1(\mu,W_t)$ we have
 \begin{align*}
  \|f\|_{H^1(\mu,W_t)}^2 = t \|f\|_{H^1(\mu,W_1)}^2 + (1-t)\|f\|_{H^1(\mu,W_0)}^2,
 \end{align*}
 which gives
 \[
 \|f\|_{H^1(\mu,W_1)}^2 \leq t^{-1}\|f\|_{H^1(\mu,W_t)}^2 \quad \text{and} \quad \|f\|_{H^1(\mu,W_0)}^2 \leq (1-t)^{-1}\|f\|_{H^1(\mu,W_t)}^2.
 \]
 We deduce that $H(\mu,W_t)\subseteq H(\mu,W_0)$ and $H(\mu,W_t)\subseteq H(\mu,W_1)$, which gives
 \begin{align*}
  \frac{1}{C(\mu,W_t)}
  &= \inf_{\substack{f\in H^1(\mu,W_t) \\ \int f \d\mu = 0 }} \frac{\int \nabla f^\top  W_t \nabla f \d\mu}{\int f^2\d\mu} \\
  &\geq t \inf_{\substack{f\in H^1(\mu,W_t) \\ \int f \d\mu = 0 }} \frac{\int \nabla f^\top  W_1 \nabla f \d\mu}{\int f^2\d\mu}
  + (1-t) \inf_{\substack{f\in H^1(\mu,W_t) \\ \int f \d\mu = 0 }} \frac{\int \nabla f^\top  W_0 \nabla f \d\mu}{\int f^2\d\mu}\\
  &\geq t \inf_{\substack{f\in H^1(\mu,W_1) \\ \int f \d\mu = 0 }} \frac{\int \nabla f^\top  W_1 \nabla f \d\mu}{\int f^2\d\mu}
  + (1-t) \inf_{\substack{f\in H^1(\mu,W_0) \\ \int f \d\mu = 0 }} \frac{\int \nabla f^\top  W_0 \nabla f \d\mu}{\int f^2\d\mu}\\
  &=  \frac{t}{C(\mu,W_1)} +  \frac{1-t}{C(\mu,W_0)}.
 \end{align*}
 Therefore, $W\mapsto C(\mu,W)^{-1}$ is concave.
 To show \eqref{eq:optimalityOfW}, we first notice that assumption $W'\preceq \Omega W$ implies
 $$
  \|f\|_{H^1(\mu,W')}^2
  \leq \int f^2 \d\mu + \Omega \int \nabla f^\top W\nabla f\d\mu
  \leq (1+\Omega) \|f\|_{H^1(\mu,W)}^2 ,
 $$
 so that $H^1(\mu,W) \subseteq H^1(\mu,W')$. Then, any function  $G$ in $\mathcal{G}(W)$ can be expressed as
 $$
  G = \sum_{i=1}^m \omega_i \frac{\nabla u_i \nabla u_i ^\top}{\int u_i^2\d\mu} ,
 $$
 for some integer $m\geq1$, some non-negative weights $\omega_i\geq 0$ with the normalisation $\sum_{i=1}^m\omega_i=1$, and
 \[
 u_i \in \mathrm{arg\,min}\left\{ \frac{\int \nabla f^\top  W \nabla f \d\mu}{\int f^2\d\mu}: f\in H^1(\mu,W) , \int f \d\mu = 0 \right\}.
 \]
 Because $u_i \in H^1(\mu,W) \subseteq H^1(\mu,W')$ we have
 \begin{align*}
  \frac{1}{C(\mu,W')}
  &= \inf_{\substack{f\in H^1(\mu,W') \\ \int f \d\mu = 0 }}
  \frac{\int \nabla f^\top  W \nabla f \d\mu}{\int f^2\d\mu} + \frac{\int \nabla f^\top  (W'-W) \nabla f \d\mu}{\int f^2\d\mu} \\
  &\leq \frac{\int \nabla u_i^\top  W \nabla u_i \d\mu}{\int u_i^2\d\mu}+ \frac{\int \nabla u_i^\top  (W'-W) \nabla u_i \d\mu}{\int u_i^2\d\mu} \\
  &= \frac{1}{C(\mu,W)}  + \int\trace\left( (W'-W) \frac{\nabla u_i \nabla u_i ^\top}{\int u_i^2\d\mu} \right)\d\mu.
 \end{align*}
 Multiplying the above inequality with $\omega_i\geq0$ and summing over $i=1,\hdots,m$ yields \eqref{eq:optimalityOfW} and concludes the proof.
\end{proof}

Let us comment on Proposition \ref{prop:CisConvex}.
First, because $W\mapsto C(\mu,W)^{-1}$ is concave, it admits a \emph{super-differential} $\partial_W(C(\mu,W)^{-1})$ at any $W$. The super-differential is the set of all $G$ such that \eqref{eq:optimalityOfW} holds for any $W'$, see \cite{boyd2004convex,rockafellar1970convex} for detailed explainations.
Here, the super-differential is identified using the scalar product $\langle W,G \rangle = \int \trace(WG)\d\mu$.
The set $\mathcal{G}(W)$ defined in \eqref{eq:superGradient} is thus a subset of $\partial_W(C(\mu,W)^{-1})$.
This will be used later on in Section \ref{sec:Gradient} to propose a gradient-based algorithm for solving \eqref{eq:minC}.
Second, the definition of $\mathcal{G}(W)$ in \eqref{eq:superGradient} requires that a minimizer of $ \frac{\int \nabla f^\top  W \nabla f \d\mu}{\int f^2\d\mu} $ over $f\in H^1(\mu,W)$ exists.
To analyze the existence of these minimizers, we introduce the linear operator $\mathcal{L}_\mu^W$ from the space $H^1(\mu,W)$ to its dual space, which is defined as
\begin{equation}\label{eq:LW}
 \int f \mathcal{L}_\mu^W g \d\mu = - \int \nabla f^\top  W \nabla g \d\mu ,
\end{equation}
for any functions $f,g\in H^1(\mu,W)$.
The operator $\mathcal{L}_\mu^W$ is the infinitesimal generator associated with the SDE \eqref{eq:SDE_matrixWeighted}.
Using the integration by part, the operator $\mathcal{L}_\mu^W$ can be expressed as
$$\mathcal{L}_\mu^W f = W :  \nabla^2 f  +  (\text{div}(W) + W \nabla \log\mu )^\top  \nabla f,$$ where $W:\nabla^2 f = \sum_{i,j=1}^dW_{i,j}\partial^2_{i,j}f$.
The following proposition relates $C(\mu,W)$ with the spectrum of $\mathcal{L}_\mu^W$, and ensures that minimizers of $ \frac{\int \nabla f^\top  W \nabla f \d\mu}{\int f^2\d\mu} $ over  $f\in H^1(\mu,W)$ exists.

\begin{proposition}\label{prop:EigendecompositionOfLW}
 Assume we have a symmetric positive-semidefinite matrix field $W$ such that
 the injection of $H^1(\mu,W)$ in $L^2(\mu)$ is compact and $H^1(\mu,W)$ is dense in $L^2(\mu)$.
 Then, if $C(\mu,W)<\infty$, there exists an orthonormal basis $\{u_i^W\}_{i=1}^\infty$ of $L^2(\mu)$ that forms an eigenbasis for $-\mathcal{L}_\mu^W$. That is,
 \begin{equation}\label{eq:EigendecompositionOfLW}
  -\mathcal{L}_\mu^W u_i^W = \lambda_i^W u_i^W ,
 \end{equation}
 where $\lambda_i^W\geq0$ is the $i$-th smallest eigenvalue of $-\mathcal{L}_\mu^W$. In addition, the following properties hold.
 \begin{enumerate}
  \item The eigenvalues $\lambda_i^W$ tends to $+\infty$ when $i\rightarrow +\infty$,
  \item The first eigenvalue is trivial $\lambda_1^W = 0$ and the associated eigenfunction is the constant function $u_1^W = \mathds{1}$,
  \item The second eigenvalue is positive $\lambda_2^W>0$ and satisfies
 \begin{equation}\label{eq:Lambda1}
  \lambda_2^W   = \frac{1}{C(\mu,W)}.
 \end{equation}
 \item The set of minimizers of $\frac{\int \nabla f^\top  W \nabla f \d\mu}{\int f^2\d\mu}$ over $f\in H^1(\mu,W)$ with $\int f\d\mu=0$, is the eigenspace $\text{span}\{u_2^W,\hdots,u_{m+1}^W\}$ associated with the eigenvalue $\lambda_2^W$, and its dimension $m$ is necessarily finite. Moreover, the super-differential of the concave function $W\mapsto \lambda_2^{W}$ is given by
 \begin{equation}\label{eq:superGradientOfLW}
  \partial_W \lambda_2^{W} =
  \overline{ \text{conv}  }  \left\{
  \frac{\nabla u \nabla u^\top}{\int u^2\d\mu} :
   u \in \text{span}\{u_2^W,\hdots,u_{m+1}^W\}
  \right\}  .
 \end{equation}
 where $\overline{(\cdot)}$ denotes the closure with respect to the topology induced by the $L^2(\mu;\R^{d\times d})$-norm on the matrix field $\|A\|_{L^2(\mu;\R^{d\times d})}=  (\int\|A\|_F^2\d\mu)^{1/2}$.
 In particular if $m=1$ then $\lambda_2^W$ is differentiable and
 \begin{equation}\label{eq:GradientOfLW}
  \nabla \lambda_2^W \overset{m=1}{=} \frac{ (\nabla u_2^W)(\nabla u_2^W)^\top}{\int (u_2^W)^2\d\mu}.
 \end{equation}
 \end{enumerate}

\end{proposition}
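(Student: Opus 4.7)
The plan is to realise $-\mathcal{L}_\mu^W$ as the non-negative self-adjoint operator associated with the closed symmetric form $\mathcal{E}(f,g) = \int \nabla f^\top W \nabla g \, \d\mu$ on $L^2(\mu)$ with form domain $H^1(\mu,W)$, derive the eigenbasis and properties (1)--(3) from the spectral theorem for operators with compact resolvent, and finally obtain the super-differential formula via a Danskin-type envelope argument combined with Hahn--Banach separation.

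\textbf{Operator and spectrum.} The form $\mathcal{E}$ is densely defined on $L^2(\mu)$ by assumption, clearly symmetric and non-negative, and closed by construction of the $H^1(\mu,W)$-norm; Kato's first representation theorem thus produces the unique non-negative self-adjoint operator $-\mathcal{L}_\mu^W$ satisfying \eqref{eq:LW}. The assumed compact embedding $H^1(\mu,W) \hookrightarrow L^2(\mu)$ transfers to compactness of the resolvent $(I - \mathcal{L}_\mu^W)^{-1}$, so the spectral theorem yields an orthonormal basis $\{u_i^W\}$ of $L^2(\mu)$ consisting of eigenfunctions with eigenvalues $\lambda_i^W \uparrow +\infty$, which is (1). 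Since $\mathds{1} \in H^1(\mu,W)$ with $\nabla \mathds{1} = 0$, the constant lies in the kernel, yielding $\lambda_1^W = 0$ and $u_1^W = \mathds{1}$, i.e.\ property (2). By Courant--Fischer and the identification $\mathds{1}^\perp = \{f : \int f \d\mu = 0\}$ in $L^2(\mu)$,
\[
 \lambda_2^W = \inf_{\substack{f \in H^1(\mu,W),\, \int f \d\mu = 0 \\ f \neq 0}} \frac{\int \nabla f^\top W \nabla f \, \d\mu}{\int f^2 \d\mu} = \frac{1}{C(\mu,W)},
\]
the second equality being \eqref{eq:inversePoincareConstant}; finiteness of $C(\mu,W)$ then forces $\lambda_2^W > 0$, which is property (3).

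\textbf{Minimisers and super-differential.} Expanding any zero-mean $f$ in $\{u_i^W\}$ and computing the Rayleigh quotient shows that the minimiser set is exactly $\mathrm{span}\{u_2^W,\ldots,u_{m+1}^W\}\setminus\{0\}$, and the multiplicity $m$ is finite because eigenvalues accumulate only at $+\infty$. For the super-differential, note that for each fixed zero-mean $u \neq 0$ the map $W \mapsto \phi_u(W) := \int \nabla u^\top W \nabla u \,\d\mu / \int u^2 \d\mu$ is \emph{linear}, with $L^2(\mu;\R^{d\times d})$-gradient $\nabla u (\nabla u)^\top / \int u^2 \d\mu$; so $\lambda_2^W$ is the infimum of an affine family, hence concave. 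The inclusion ``$\supseteq$'' in \eqref{eq:superGradientOfLW} is already provided by Proposition~\ref{prop:CisConvex}. For the reverse inclusion, I would argue by contradiction: if $G \in \partial_W \lambda_2^W$ lies outside the closed convex set on the right-hand side of \eqref{eq:superGradientOfLW}, Hahn--Banach separation furnishes a direction $V$ with $\sup_{u}\phi_u(V) < \langle G, V\rangle$, the supremum being over minimisers $u$; combined with a Danskin-type expansion $\lambda_2^{W+tV} - \lambda_2^W = t\inf_{u}\phi_u(V) + o(t)$ as $t \to 0^+$, this contradicts the super-gradient inequality $\lambda_2^{W+tV} \leq \lambda_2^W + t\langle G, V\rangle$. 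When $m=1$, the right-hand side of \eqref{eq:superGradientOfLW} collapses to a single matrix field, which is the standard condition for differentiability and delivers \eqref{eq:GradientOfLW}.

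\textbf{Main obstacle.} The delicate step is justifying the Danskin-type first-order expansion of $\lambda_2^W$. It rests on perturbation theory for self-adjoint operators, in particular the fact that the spectral projector onto the cluster of eigenvalues near $\lambda_2^W$ is analytic in the form-perturbation $tV$. Because $m$ is finite, one may restrict the perturbed form to the finite-dimensional invariant subspace associated with this cluster, reducing the analysis to Kato's matrix analytic perturbation theory, from which the envelope identity $\lim_{t \to 0^+} t^{-1}(\lambda_2^{W+tV} - \lambda_2^W) = \inf_u \phi_u(V)$ follows by a direct smallest-eigenvalue computation in the restricted subspace.
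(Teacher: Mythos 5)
Your proof reaches the same conclusions, and for properties (1)--(3) it is essentially equivalent to the paper's: the paper obtains the eigendecomposition by checking continuity and coercivity of the bilinear form (coercivity is where $C(\mu,W)<\infty$ enters) and invoking a Lax--Milgram/compact-Rellich result (Theorem~7.3.2 of \cite{allaire2007numerical}), while you package the same content via Kato's representation theorem for closed non-negative forms and the spectral theorem for operators with compact resolvent. The genuine divergence is in item (4). The paper finishes with a one-line citation of Theorem~2.4.18 of \cite{zalinescu2002convex}, the standard super-differential formula for a pointwise infimum of affine functions; you instead reprove that formula from scratch via Hahn--Banach separation together with a Danskin-type first-order expansion of $\lambda_2^{W+tV}$, which you propose to justify by Kato--Rellich analytic perturbation theory on the finite-dimensional cluster near $\lambda_2^W$. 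This is a correct and more self-contained route, but to close it you should (i) restrict the perturbation directions $V$ so that $W+tV$ stays a valid form perturbation for small $t$ (e.g.\ $\pm V\preceq\Omega W$, exactly the hypothesis already used in Proposition~\ref{prop:CisConvex}); and (ii) note explicitly that the minimisers, once normalised to $\int u^2\d\mu=1$ inside the finite-dimensional eigenspace $\text{span}\{u_2^W,\hdots,u_{m+1}^W\}$, form a compact set, which is what makes the infimum in the Danskin expansion attained and lets the separation argument terminate. Citing Zalinescu buys brevity; your route buys an explicit, verifiable mechanism and makes transparent exactly where finiteness of $m$ is used.
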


\begin{proof}
 To show that $-\mathcal{L}_\mu^W$ admits the eigendecomposition \eqref{eq:EigendecompositionOfLW}, it is sufficient to show that the bi-linear form $(f,g)\mapsto\int f (-\mathcal{L}_\mu^W)g \d\mu=\int \nabla f^\top W \nabla g\d\mu$ is continuous and coercive on $H^1(\mu,W)\subset L^2(\mu)$, see Theorem 7.3.2 in \cite{allaire2007numerical} for instance.
 To show the continuity, the Cauchy--Schwarz inequality leads to
 \begin{align*}
  &\left| \int f (-\mathcal{L}_\mu^W)g \d\mu - \int f' (-\mathcal{L}_\mu^W)g' \d\mu \right| \\
  &=\left| \int (\sqrt{W}\nabla f)^\top  (\sqrt{W}(\nabla g-\nabla g')) \d\mu - \int (\sqrt{W}(\nabla f'-\nabla f))^\top \sqrt{W} \nabla g' \d\mu \right| \\
  &\leq \|g-g'\|_{H^1(\mu,W)} \left(\int \nabla f^\top W \nabla f \d\mu\right)^{1/2} +\|f-f'\|_{H^1(\mu,W)} \left( \int \nabla g'^\top W \nabla g' \d\mu \right)^{1/2}.
 \end{align*}
 Then $\int f (-\mathcal{L}_\mu^W)g \d\mu \rightarrow \int f' (-\mathcal{L}_\mu^W)g' \d\mu$ whenever $f\rightarrow f'$ and $g\rightarrow g'$ in $H^1(\mu,W)$.
 The coercivity is a direct consequence of $C(\mu,W)<\infty$ since
 $$
  \|f\|^2_{H^1(\mu,W)} = \int f^2\d\mu + \int \nabla f^\top W \nabla f \d\mu \leq (C(\mu,W)+1)
  \int f (-\mathcal{L}_\mu^W) f \d\mu ,
 $$
 holds for any $f\in H^1(\mu,W)$.
 Next, because the injection of $H^1(\mu,W)$ in $L^2(\mu)$ is compact and that $H^1(\mu,W)$ is dense in $L^2(\mu)$, Theorem 7.3.2 in \cite{allaire2007numerical} ensures that $-\mathcal{L}_\mu^W$ admits the eigendecomposition \eqref{eq:EigendecompositionOfLW} and that $\lambda_i^W \rightarrow +\infty $ when $i\rightarrow +\infty$.
 By \eqref{eq:LW}, the first eigenpaire is trivial with $\lambda_1^W=0$ and $u_1^W=\mathds{1}$.
 Using the Courant--Fischer--Weyl min-max principle, we have
 $$
  \lambda_2^W = \inf_{\substack{f\in H^1(\mu,W) \\ \int f u_1^W \d\mu = 0 }} \frac{\int f (-\mathcal{L}_\mu^W)f \d\mu}{\int f^2\d\mu} ,
 $$
 Together with \eqref{eq:inversePoincareConstant}, this gives the identity in \eqref{eq:Lambda1}.

 Because $\lambda_i^W \rightarrow +\infty $, we have that the multiplicity $m$ of $\lambda_2^W$ is necessarily finite.
 As a consequence, the set of the minimizers of
 $$\frac{\int f (-\mathcal{L}_\mu^W)f \d\mu}{\int f^2\d\mu}
 = \int \trace\left( W \, \frac{  \nabla f \nabla f^\top }{\int f^2\d\mu} \right) \d\mu ,
 $$
 over $f\in H^1(\mu,W)$ with $\int f\d\mu=0$ is exactly $\text{span}\{u_2^W,\hdots,u_{m+1}^W\}$. Thus, Theorem 2.4.18 of \cite{zalinescu2002convex} establishes that \eqref{eq:superGradientOfLW} holds.
 Finally, \eqref{eq:GradientOfLW} is a direct consequence of \eqref{eq:superGradientOfLW}. This concludes the proof.
\end{proof}

Let us comment on the assumptions of Proposition \ref{prop:EigendecompositionOfLW}. First, the space $H^1(\mu,W)$ is dense in $L^2(\mu)$ is classically obtained by showing that the function space $\mathcal{C}^\infty_c(\R^d)$, which is of class $\mathcal{C}^\infty$ with compact support in $\R^d$, is dense in both $H^1(\mu,W)$ and $L^2(\mu)$. For classical (unweighted) Sobolev spaces, it is well known that $\mathcal{C}^\infty_c(\R^d)$ is dense in  $L^2(\R^d)$ and in $H^1(\R^d)$, see Theorem 4.12 and Corollary 9.8 of \cite{brezis2011functional} for details. For weighted Sobolev spaces, the result still holds under the assumption that the Lebesgue density of $\mu$ is in the Muckenhoupt's $\mathcal{A}_2$-class, see Theorem 1.1 of \cite{nakai2004density}. In particular, any measure $\mu$ with bounded support $\Omega=\text{supp}(\mu)$ and whose Lebesgue density is bounded away from 0 and $+\infty$ is in the Muckenhoupt's $\mathcal{A}_2$-class. If we further assume that $\alpha I_d\preceq W\preceq \beta I_d $ for some constant $0<\alpha\leq\beta<\infty$, then $H^1(\mu,W)=H^1(\mu)$ is dense in $L^2(\mu)$.

Second, the compact injection of $H^1(\mu,W)$ in $L^2(\mu)$ can be obtained via the Rellich–Kondrachov theorem, see Theorem 9.16 of \cite{brezis2011functional}. This theorem states that if $\Omega\subset\R^d$ is a bounded subset of class $\mathcal{C}^1$, then the injection of the (unweighted) Sobolev space $H^1(\Omega)$ in $L^2(\Omega)$ is compact. Therefore, if $\Omega=\text{supp}(\mu)$ and there exist some constant $0<\alpha\leq\beta<\infty$ such that $\alpha I_d\preceq W\preceq \beta I_d $ and $ \alpha\leq \frac{\d\mu}{\d x}\leq\beta$, then the space  $H^1(\Omega) = H^1(\mu,W)$ is dense in $L^2(\Omega)=L^2(\mu)$.

\section{Solution algorithm}\label{sec:NumericalSolution}

To account for the constraints $W(x)\in\mathcal{S}_+^d$ and $\int\trace(W)\d\mu=\trace(\Cov_\mu)$ in Problem \eqref{eq:minC}, we propose to use the following parametrization of $W$
\begin{equation}\label{eq:WofV}
 W(x) = V(x)^2 \frac{\trace(\Cov_\mu)}{\int \|V\|_F^2\d\mu},
\end{equation}
where $V:\R^d\rightarrow\mathcal{S}^d$ is a matrix field to be determined. Here, $\mathcal{S}^d\subset\R^{d\times d}$ denotes the set of symmetric matrices and $\|\cdot\|_F$ denotes the Frobenius norm such that $\|A\|_F^2=\trace(A^\top A)$.
Because any $W:\R^d\rightarrow\mathcal{S}_+^d$ with $\int\trace(W)\d\mu=\trace(\Cov_\mu)$ can be written as in \eqref{eq:WofV} for some $V$, we have that any maximizer of
\begin{align}
J(V) = \inf_{\substack{f\in H^1(\mu,V^2) \\ \int f\d\mu=0}}\frac{\int\nabla f^\top V^2 \nabla f\d\mu}{(\int\|V\|_F^2\d\mu)(\int f^2\d\mu)}
 \overset{ \eqref{eq:inversePoincareConstant} \&  \eqref{eq:WofV} }{=} \frac{1}{\trace(\Cov_\mu) C(\mu,W)} ,
\end{align}
yields a solution $W$ to \eqref{eq:minC}.
By parametrizing $W$ with $V$, the concavity of the constrained problem \eqref{eq:minC} (see Proposition \ref{prop:CisConvex}) no longer holds. Instead, we obtain an \emph{unconstrained} maximization problem
\begin{equation}\label{eq:maxJ}
\max_{V:\R^d\rightarrow\mathcal{S}^d} J(V) ,
\end{equation}
which can be much easier to solve in practice.

\subsection{Gradient ascent algorithms}\label{sec:Gradient}

Proposition \ref{prop:EigendecompositionOfLW} permits one to write
\[
J(V)=\frac{\lambda_2^{V^2}}{\int \|V\|_F^2\d\mu},
\]
where $\lambda_2^{V^2}$ denotes the first nonzero eigenvalue of the operator $\mathcal{L}_\mu^{V^2}$ as in \eqref{eq:LW}.
If we assume that $\lambda_2^{V^2}$ has multiplicity one, then $W\mapsto\lambda_2^{W}$ is differentiable around $W=V^2$. In this case, $J$ is also differentiable around $V$ as the product and composition of differentiable functions, and its gradient is given by
\begin{equation}\label{eq:GradientOfJ}
 \nabla J(V)
 = \frac{ GV+VG-2J(V)V}{\int\|V\|^2_F\d\mu},
 \quad\text{with } G = \frac{(\nabla u_2^{V^2} )( \nabla u_2^{V^2} )^\top }{ \int (u_2^{V^2})^2 \d\mu } .
\end{equation}
If the multiplicity of $\lambda_2^{V^2}$ is larger than one, then $J$ is no longer differentiable and does not admit a super-differential (because $J$ is not concave).
Fortunately, $J$ still possesses a \emph{generalized gradient} in the sense of \cite[Chapter 10]{clarke2013functional} which is given with a similar expression at in \eqref{eq:GradientOfJ}, albeit $G$ now belonging to the super-differential $\partial_W(\lambda_2^{V^2})$. This is, however, not further analyzed in the present work.

The gradient ascent algorithm with constant step size $\rho\geq0$ consists of a sequence of updates $\{V^{(k)}\}_{k\geq0}$ defined as
\begin{align}
 V^{(k+1/2)} &= V^{(k)}+\rho \nabla J(V^{(k)}) , \label{eq:GradientAscentStep}\\
 V^{(k+1)} &= \frac{V^{(k+1/2)}}{(\int\|V^{(k+1/2)}\|_F^2\d\mu)^{1/2}}.
\end{align}
Given the invariance condition $J(\alpha V) = J(V)$ for all $\alpha\neq0$, the last step in the above normalizes the iterative solution without modifying the value of the objective function, i.e., $J(V^{(k+1)}) = J(V^{(k+1/2)})$.
While gradient ascent is universally popular, alternative methods such as the momentum method \cite{qian1999momentum} and Nesterov's accelerated gradient algorithm \cite{nesterov1983method} can result in significantly faster convergence to the optimum.
In our case, the momentum method is defined by the following iterations
\begin{align}
 D^{(k+1)} &= \alpha D^{(k)} + \rho \nabla J(V^{(k)}) , \label{eq:MOM1}\\
 V^{(k+1/2)} &= V^{(k)} + D^{(k+1)} , \label{eq:MOM2}\\
 V^{(k+1)} &= \frac{V^{(k+1/2)}}{(\int\|V^{(k+1/2)}\|_F^2\d\mu)^{1/2}} \label{eq:MOM3} .
\end{align}
It consists of continuing updating along the previous update direction $D^{(k)}$, scaled by a factor $0\leq \alpha<1$. Nesterov's accelerated gradient algorithm replaces the direction in \eqref{eq:MOM1} with
\begin{equation}\label{eq:NAG}
 D^{(k+1)} = \alpha_k D^{(k)} + \rho \nabla J(V^{(k)} + \alpha_k D^{(k)} ) ,
\end{equation}
where $0\leq \alpha_k<1$ depends on the iteration number, see \cite{sutskever2013importance}. Nesterov proposed to use $\alpha_k = 1-3/(5+k)$, which is adopted in our experiments.

We compute the gradient $\nabla J(V^{(k)})$ as in \eqref{eq:GradientOfJ}, which assumes $\lambda_2^{(V^{(k)})^2}$ has multiplicity one. As shown in the numerical results, the first two nonzero eigenvalues cross each other several times during the iterative update and they tend to be equal at the optimum.
Even though the first two nonzero eigenvalues are always numerically different---which indicates that the objective function $J$ may possess several discontinuities in the gradient---the use of either the momentum method or Nesterov's accelerated gradient turns out to be very efficient for solving \eqref{eq:maxJ} when the iterates approach the optimum.

\subsection{Finite element discretization}

To numerically compute the gradient $\nabla J(V^{(k)})$ in \eqref{eq:GradientOfJ}, we employ the finite element method \cite{allaire2007numerical,zienkiewicz2000finite}.
We consider a triangulation $\mathcal{T}_M$ of the support of $\mu$ made with $M$ simplices  (\emph{i.e.} triangles when $d=2$) and $N$ nodes such that $\text{supp}(\mu)=\omega_1\cup\hdots\cup \omega_M$.
Here, we assume that the support of $\mu$ is bounded.
We define the finite element space $\mathcal{V}_N\subset H^1(\mu,W)$ as
$$
 \mathcal{V}_N = \text{span}\{\phi_1,\hdots,\phi_N\} ,
 \quad \phi_i:\text{continuous and piecewise affine},
$$
such that $\phi_i(x_j)=\delta_{i,j}$, where $x_j$ is the $j$-th node of $\mathcal{T}_M$.
The function
$$
 J_N(V) = \inf_{\substack{f\in \mathcal{V}_N \\ \int f\d\mu=0}}\frac{\int\nabla f^\top V^2 \nabla f\d\mu}{(\int\|V\|_F^2\d\mu)(\int f^2\d\mu)} ,
$$
yields an approximation to $J(V)$ such that $J_N(V) \rightarrow J(V)$ when $N\rightarrow \infty$, see \emph{e.g.} Theorem 7.4.4 from \cite{allaire2007numerical}.
We propose to replace $J$ by $J_N$ in the gradient algorithms \eqref{eq:GradientAscentStep}, \eqref{eq:MOM1} and \eqref{eq:NAG}. Next, we show how to compute $\nabla J_N(V)$.

For a given matrix field $W$ as in \eqref{eq:WofV}, we let $K^W\in\R^{N\times N} $ and $ M\in\R^{N\times N}$ be the matrices defined by
\begin{align}
 K_{i,j}^{W} = \int \nabla\phi_i^\top  W \nabla\phi_j \d\mu,
 \qquad\text{and}\qquad
 M_{i,j} = \int \phi_i\phi_j \d\mu,
 \label{eq:defKM}
\end{align}
for all $1\leq i,j\leq N$. Consider the generalized eigendecomposition of the matrix pair $(K^W,M)$
$$
 K^W U_i^W = \lambda^W_{N,i} M U_i^W ,
$$
where $(U_j^W)^\top  M U_i^W = \delta_{i,j}$ for any $1\leq i,j\leq N$ and where the eigenvalues $\lambda^W_{N,1},\hdots,\lambda^W_{N,N}$ are listed in the increasing order.
Because the finite element space $\mathcal{V}_N$ contains the constant function, the first eigenvalue is trivial $\lambda^W_{N,1}=0$ and the corresponding eigenvector $U_1^W =U_1= (1,\hdots,1)^\top  $ represents the constant function $\mathds{1}=\sum_{i=1}^N U_{1,i}\phi_i$ onto the finite element basis.
Also, because $\int f \d\mu=U^\top  M U_1$ for any $f=\sum_{i=1}^N U_i\phi_i$, we can express $J_N(V)$ as follow
$$
 J_N(V)
 = \frac{1}{\int\|V\|_2^2\d\mu}\inf_{\substack{U\in \R^N \\ U^\top  M U_1 = 0 }} \frac{U^\top K^{V^2} U}{U^\top MU}
 = \frac{\lambda_{N,2}^{V^2}}{\int\|V\|_2^2\d\mu} .
$$
As in Equation \eqref{eq:GradientOfJ}, if the multiplicity of $\lambda_{N,2}^{V^2}$ is one, then $J_N$ admits a gradient at $V$ which is given by
\begin{equation} \label{eq:GradientOfJN}
\begin{split}
 \nabla J_N(V) &= \frac{ G_N V+VG_N -2J_N(V) V}{\int\|V\|^2_F\d\mu}, \\
 &\text{with } G_N = \frac{ \left(\sum_{i=1}^N (U_2^{V^2})_i \nabla \phi_i \right)\left(\sum_{i=1}^N (U_2^{V^2})_i \nabla \phi_i \right)^\top}{ (U_2^{V^2})^\top M (U_2^{V^2}) }.
\end{split}
\end{equation}
Thus, to compute $\nabla J_N(V)$, one needs first to assemble the finite element matrices $K^{V^2}$ and $M$ and to compute the second smallest generalized eigenpair $(\lambda_2^{V^2},U_2^{V^2})$ of $(K^{V^2},M)$. Then, $\nabla J_N(V)$ is assembled according to \eqref{eq:GradientOfJN}.
Notice that, because $\phi_i$ are piecewise affine, the matrix field $G_N$ is piecewise constant on each element $\omega_i$ of the mesh.
In addition, if the field $V$ is also piecewise constant of the form of
\begin{equation}
 V(x) =  \sum_{m=1}^M V_m \mathds{1}_{\omega_m}(x) , \text{ where } V_m\in\R^{d\times d},
\end{equation}
then $\nabla J_N(V)$ as in \eqref{eq:GradientOfJN} is also piecewise constant.
Here, $\mathds{1}_{\omega_m}$ is the indicator function of the $m$-th element $\omega_m$.
Therefore, any of the iterates of the gradient algorithms in Equations \eqref{eq:GradientAscentStep}--\eqref{eq:NAG} remains piecewise constant, and so is the resulting symmetric positive field $W$ given by \eqref{eq:WofV}.
The resulting algorithm is summarized in Algorithm \ref{algo}.

\begin{algorithm}[!htb]
\label{alg}
   \caption{Optimal metric using finite element method and momentum algorithm}
\begin{algorithmic}[1]
   \STATE {\bfseries Input}: measure $\mu$, finite element partition $\text{supp}(\mu)=\omega_1\cup\hdots\cup\omega_M$, step size $\rho>0$ and momentum parameter $0\leq\alpha<1$.
   \STATE Define the finite element space $\mathcal{V}_N=\text{span}\{\phi_1,\hdots,\phi_N\}$ of piecewise affine functions.
   \STATE For all $i,j\leq N$ and $m\leq M$, precompute
   $$
   \mu_m = \int_{\omega_m}\d\mu
   \quad\text{and}\quad
   M_{i,j} = \int \phi_i\phi_j \d\mu
   \quad\text{and}\quad
   K_{i,j,m} = \int_{\omega_m} \nabla\phi_i\nabla\phi_j^\top \d\mu \in\R^{d\times d}.
   $$
   \STATE For all $m\leq M$, initialize $V^{(0)}_m=I_d/d$ and $D^{(0)}_m=0$.
   \STATE Set $k=0$.
   \WHILE{not converged}
   \STATE For all $m\leq M$, compute the metric
        \hfill (see Equation \eqref{eq:WofV})
   $$
    W^{(k)}_m = (V^{(k)}_m)^2 \frac{\trace(\Cov_\mu)}{ \sum_{m=1}^M \|V^{(k)}_m\|_F^2  \mu_m }
   $$
   \STATE Assemble the matrix $K^{(k)} \in\R^{N\times N}$ given by
   \hfill (see Equation \eqref{eq:defKM})
   $$
    K_{i,j}^{(k)}
    =
    \sum_{i=1}^M  \trace\left(K_{i,j,m} W^{(k)}_m \right)
   $$
   \STATE Compute the smallest non-zero generalized eigenpair $(\lambda^{(k)},U^{(k)})\in \R_{>0}\times\R^N$ solution to
   $$K^{(k)} U^{(k)} = \lambda^{(k)} M U^{(k)} $$
   \STATE For all $m\leq M$, compute
        \hfill (see Equation \eqref{eq:GradientOfJN})
$$
 G_m^{(k)} = \frac{\sum_{i,j=1}^N U_i^{(k)} U_j^{(k)} K_{i,j,m}}{\sum_{i,j=1}^N U_i^{(k)} U_j^{(k)} M_{i,j}}
$$
and then $\nabla J_m^{(k)} = G_m^{(k)} V_m^{(k)} + V_m^{(k)} G_m^{(k)} - 2 \lambda^{(k)} V_m^{(k)}$
   \STATE Compute the increment
   $
    D_m^{(k+1)} = \alpha D_m^{(k)} + \rho \nabla J_m^{(k)}
   $
   \hfill (see Equation \eqref{eq:MOM1})
   \STATE Update $V_m^{(k+1/2)} = V_m^{(k)} + D_m^{(k+1)}$ \hfill (see Equation \eqref{eq:MOM2})
    \STATE Normalize $V_m^{(k+1)} = V_m^{(k+1/2)} / (\sum_{m'=1}^M \|V^{(k)}_{m'}\|_F^2  \mu_{m'} )^{1/2} $
        \hfill (see Equation \eqref{eq:MOM3})
   \STATE $k\leftarrow k+1$
   \ENDWHILE
   \STATE {\bfseries Output}: Metric $W^{(k)} = \sum_{m=1}^M W^{(k)}_m \mathds{1}_{\omega_m}$ and the associated Poincaré constant $C(\mu,W^{(k)}) = 1/\lambda^{(k)}$.
\end{algorithmic}
\label{algo}
\end{algorithm}

\section{Numerical experiments}\label{sec:Applications}

We demonstrate the proposed method on four benchmarks, each of them in dimension $d=2$.
Code for reproducing all of these numerical results is available online\footnote{\url{https://gitlab.inria.fr/ozahm/optimal-riemannian-metric-for-poincare-inequalities.git}}. The first is a Gaussian mixture of the form of
$$
 \d\mu_1(x) \propto \sum_{i=1}^3 \exp\left( -\frac{\|x-x_i\|^2}{\sigma^2}\right) \d x,
$$
where $\sigma^2 = 0.025$, $x_1=(0,0.5)$, $x_2=(\sqrt{3}/4,\sqrt{3}/4)$ and $x_3=(-\sqrt{3}/4,\sqrt{3}/4)$.
The second distribution is concentrated on a ring of radius $r$ and is defined by
$$
 \d\mu_2(x) \propto \exp\left( -\frac{(\|x\|-r)^2}{\sigma^2}\right)\d x,
$$
where $\sigma^2 = 0.0032$ and $r=0.65$.
The third is a uniform measure on a $H$-shaped domain $\Omega\subset\R^2$, meaning
$$
 \d\mu_3(x) \propto \mathds{1}_{\Omega}(x) \d x.
$$
The fourth is a slight modification of $\mu_3$ which consists in enlarging the support of $\mu_3$ to the convex hull $\text{conv}(\Omega)$ of the $H$-shaped $\Omega$ as follow
$$
 \d\mu_4(x) \propto \d\mu_3(x) + \varepsilon\, \mathds{1}_{\text{conv}(\Omega)}(x) \d x,
$$
where $\varepsilon=10^{-7}$. The goal is to create a measure which is close to $\mu_3$ while being supported on a convex domain.
That way, $\mu_4$ satisfies the (sufficient) conditions for being a moment measure with a smooth enough convex potential $\varphi$ so that Theorem \ref{th:WstartMomentMap} applies, see \cite{berman2013real}. This will be illustrated later in Figure \ref{fig:optimalW}.

The measures $\mu_1,\hdots,\mu_4$ as well as the finite element discretizations of their supports, are represented in Figure \ref{fig:discretization}. The Poincaré constant of these measures is respectively
$$
 C(\mu_1) = 61.79
 ,\quad
 C(\mu_2) = 1.971
 ,\quad
 C(\mu_3) = 15.01
 ,\quad
 C(\mu_4) = 15.01.
$$

\begin{figure}
\centering
 \begin{subfigure}{0.24\textwidth}
        \centering
        \includegraphics[width=\textwidth]{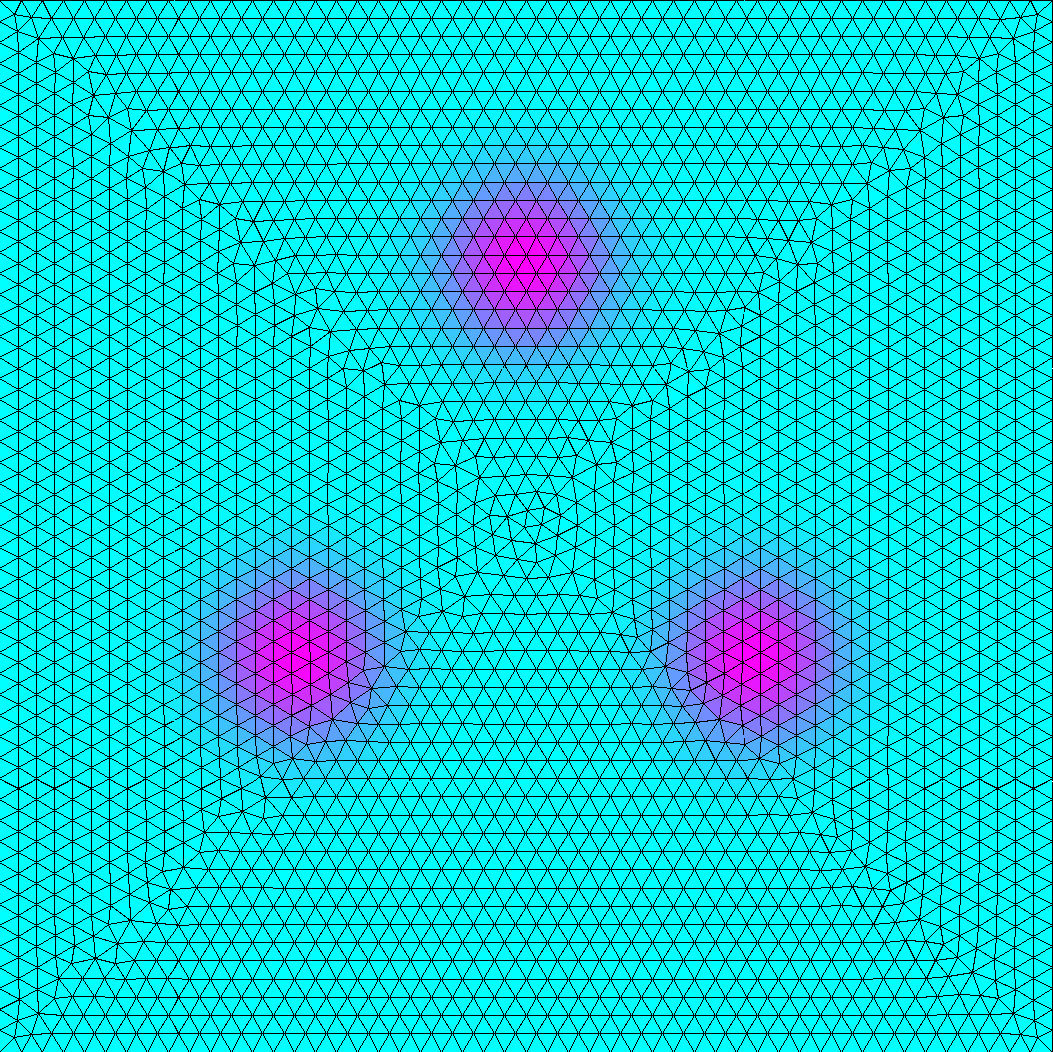}
        \caption{Tri-modal $\mu_1$}
    \end{subfigure}
    \begin{subfigure}{0.24\textwidth}
        \centering
        \includegraphics[width=\textwidth]{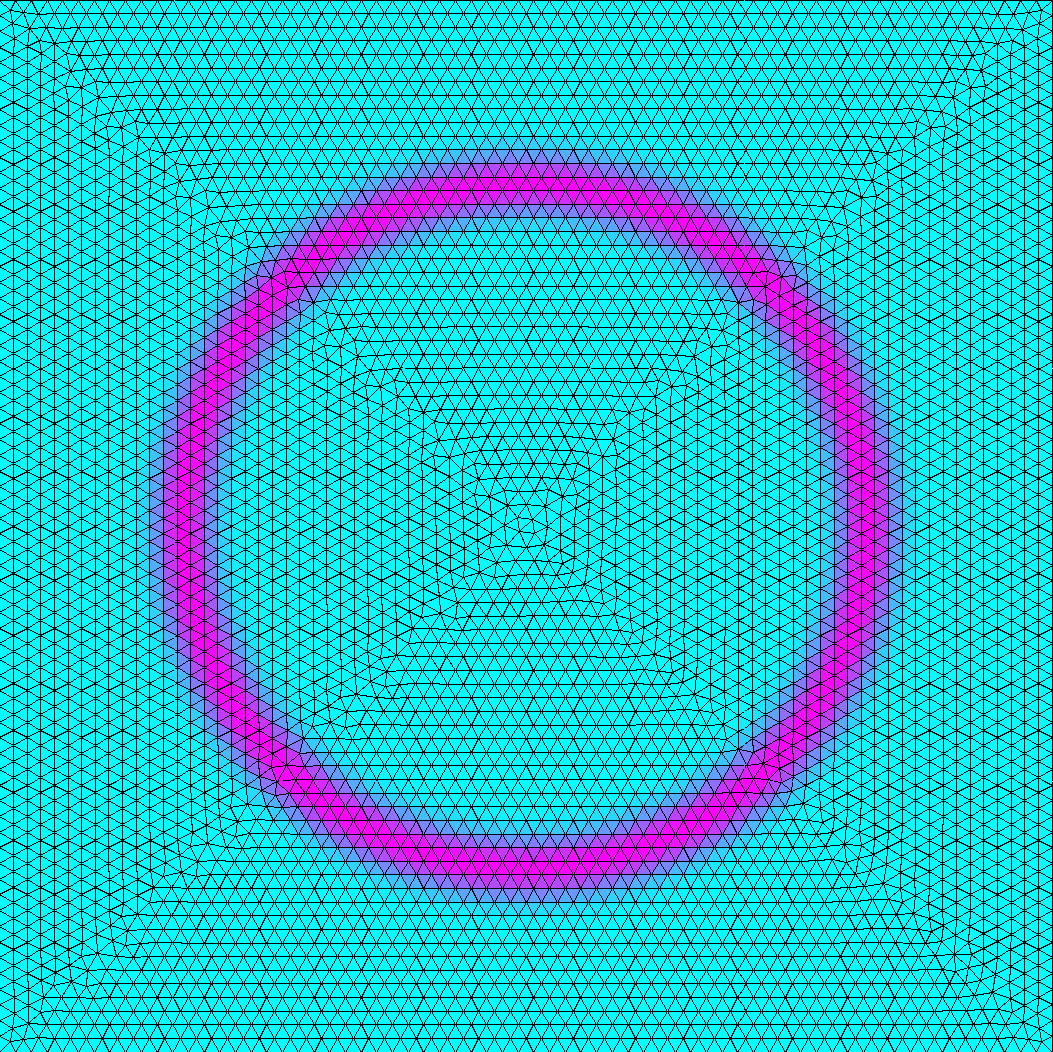}
        \caption{Ring $\mu_2$}
    \end{subfigure}
    \begin{subfigure}{0.24\textwidth}
        \centering
        \includegraphics[width=\textwidth]{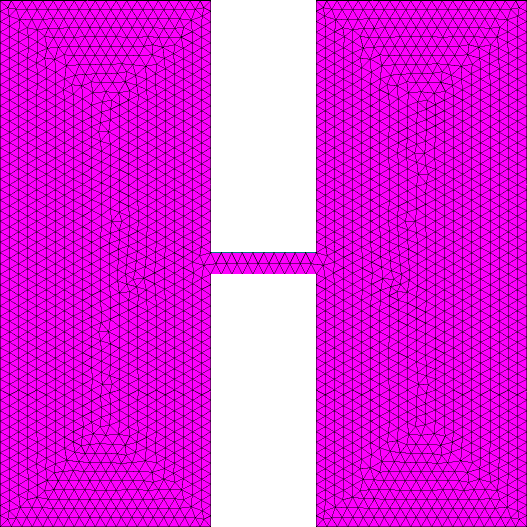}
        \caption{H-shaped $\mu_3$}
    \end{subfigure}
    \begin{subfigure}{0.24\textwidth}
        \centering
        \includegraphics[width=\textwidth]{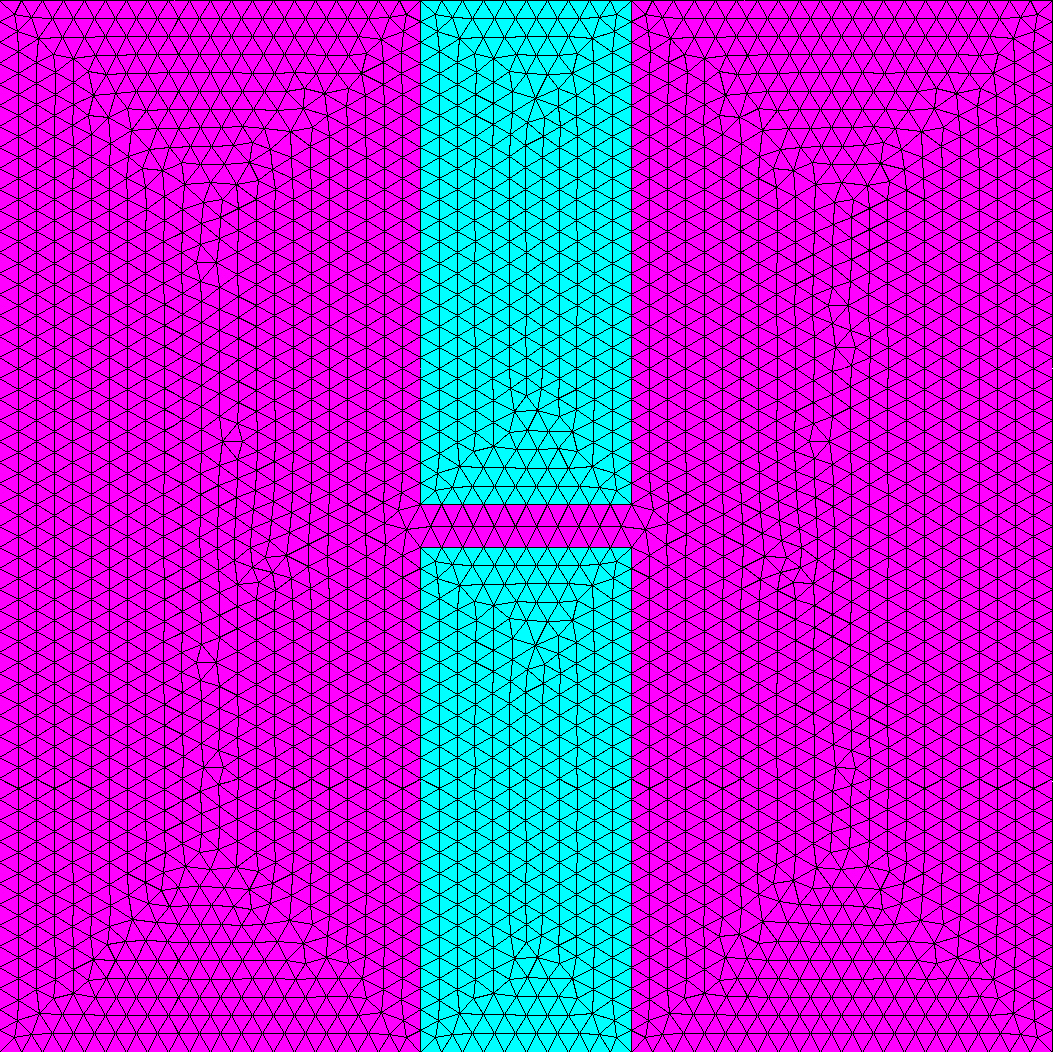}
        \caption{H-shaped $\mu_4$}
    \end{subfigure}
 \caption{The four benchmark measures and the finite element discretization of their support (cyan indicates low probability density and purple indicates high probability density).
 The number of elements in the mesh are respectively $M_1=5718$, $M_2=10256$, $M_3=4610$ and $M_4=5742$.
 }
    \label{fig:discretization}
\end{figure}

\subsection{Gradient ascent methods on the tri-modal measure}

Figure \ref{fig:convergence_trimodal} reports the evolution of the spectrum of $\mathcal{L}_{\mu_1}^{W}$ during the optimization process.
We observe that all the gradient-ascent methods we considered yield $\lambda_1\geq0.9$ (and therefore $C(\mu,W)\leq 1.12$) after 15 iterations.
The oscillations between the two first nonzero eigenvalues $\lambda_2$ and $\lambda_3$ indicate that those eigenvalues are constantly crossing each other during the iterations.
When approaching optimum, $\lambda_2,\lambda_3\rightarrow1$, Nesterov's acceleration yields more stable behaviour in the iterates compared to the other methods. Running Nesterov's acceleration for $k=100$ iterations, we obtain $\lambda_2^{(k)}=0.9998$, showing that one can bring the Poincaré constant $C(\mu,W^{(k)})$ arbitrarily close to $1$.

\begin{figure}
\centering
 \begin{subfigure}{0.32\textwidth}
        \centering
        \includegraphics[width=\textwidth]{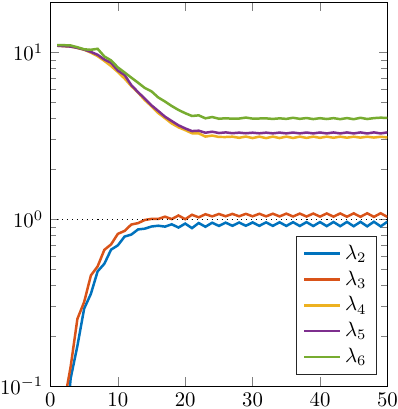}
        \caption{Gradient ascent}
    \end{subfigure}
    \begin{subfigure}{0.32\textwidth}
        \centering
        \includegraphics[width=\textwidth]{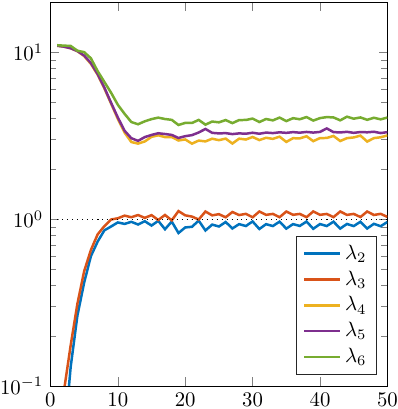}
        \caption{Momentum}
    \end{subfigure}
    \begin{subfigure}{0.32\textwidth}
        \centering
        \includegraphics[width=\textwidth]{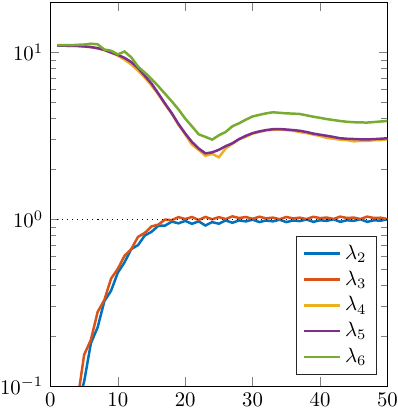}
        \caption{Nesterov}
    \end{subfigure}
 \caption{(Tri-modal $\mu_1$) Evolution of the five first nonzero eigenvalues of the diffusion operator $\mathcal{L}_\mu^{W}$ during the iterative process of the gradient-ascent methods, with $\rho=0.01$ and $\alpha=0$ (Gradient ascent), $\alpha=0.5$ (Momentum) and $\alpha_k = 1-3/(5+k)$ (Nesterov).}
    \label{fig:convergence_trimodal}
\end{figure}

Figure \ref{fig:modes_trimodal} shows the first 5 nontrivial eigenvectors of $\mathcal{L}_{\mu_1}^{W}$ for the (initial) constant metric $W^{(0)}=I_d  \trace(\Cov_\mu)/d$ and the (final) metric $W^{(k)}$ obtained using Nesterov's acceleration with $k=100$ iterations. We observe that the first two eigenvectors are close to affine functions, which confirms the result of Theorem \ref{th:OptimalityCondition}, reflecting the optimality of $W^{(k)}$.

\begin{figure}
\centering

 \begin{subfigure}{0.19\textwidth}
        \centering
        \includegraphics[width=\textwidth]{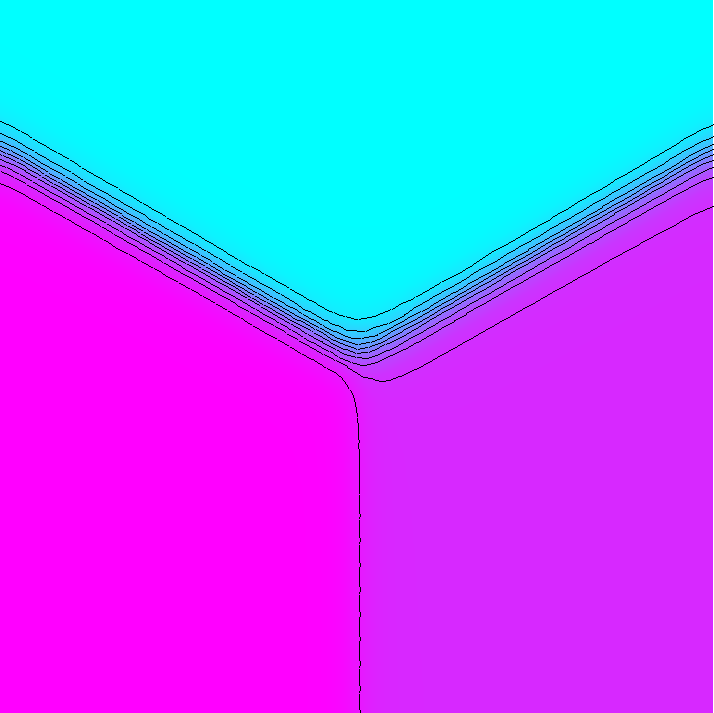}
        \caption{$\lambda_2=0.0162$}
    \end{subfigure}
    \begin{subfigure}{0.19\textwidth}
        \centering
        \includegraphics[width=\textwidth]{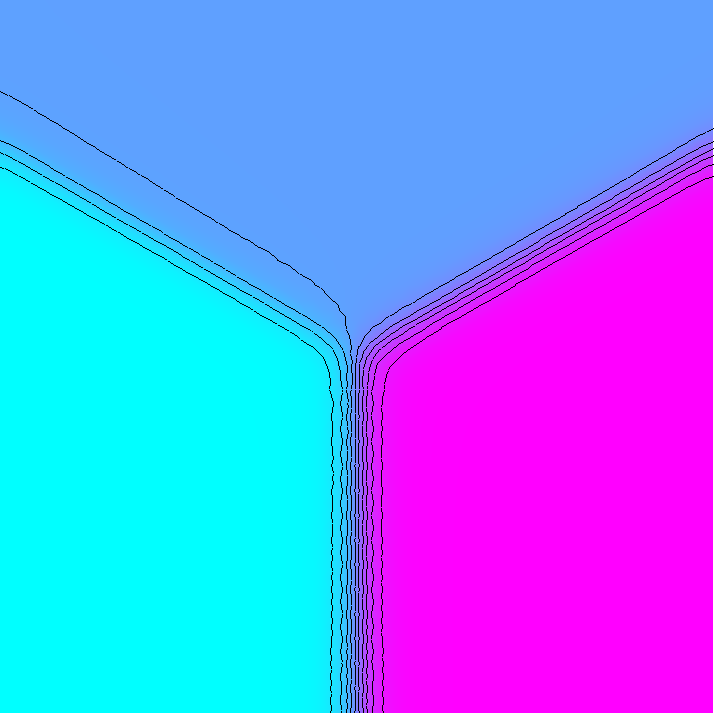}
        \caption{$\lambda_3=0.0162$}
    \end{subfigure}
    \begin{subfigure}{0.19\textwidth}
        \centering
        \includegraphics[width=\textwidth]{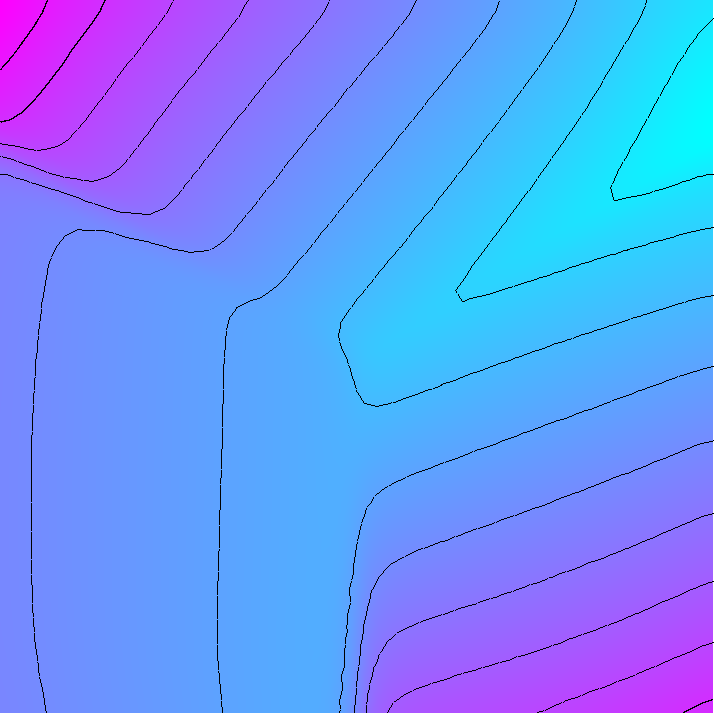}
        \caption{$\lambda_4=10.9513$}
    \end{subfigure}
    \begin{subfigure}{0.19\textwidth}
        \centering
        \includegraphics[width=\textwidth]{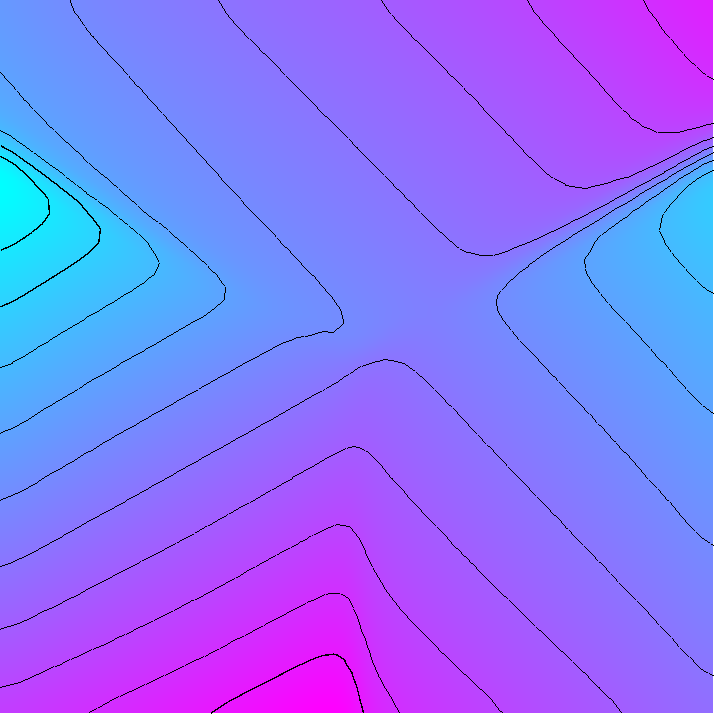}
        \caption{$\lambda_5=10.9517$}
    \end{subfigure}
    \begin{subfigure}{0.19\textwidth}
        \centering
        \includegraphics[width=\textwidth]{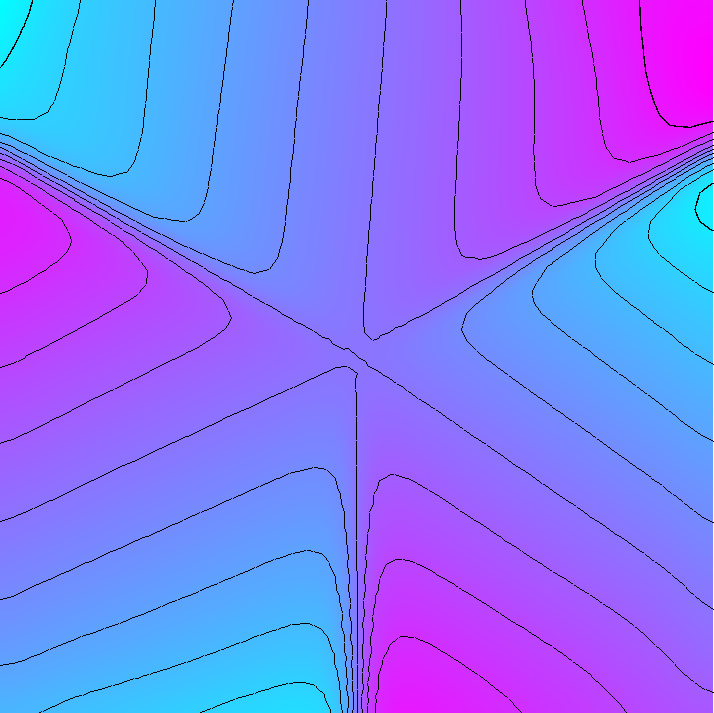}
        \caption{$\lambda_6=11.0290$}
    \end{subfigure}
 \begin{subfigure}{0.19\textwidth}
        \centering
        \includegraphics[width=\textwidth]{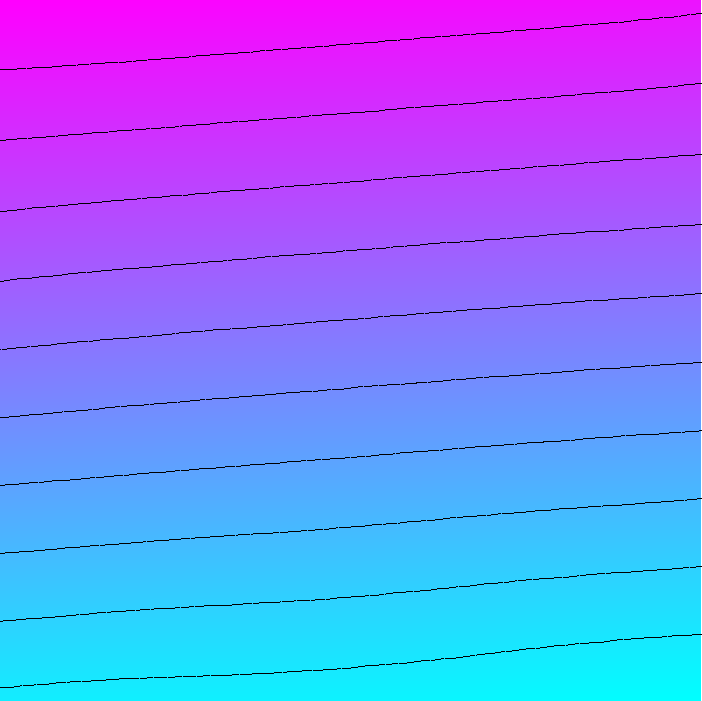}
        \caption{$\lambda_2=0.9998$}
    \end{subfigure}
    \begin{subfigure}{0.19\textwidth}
        \centering
        \includegraphics[width=\textwidth]{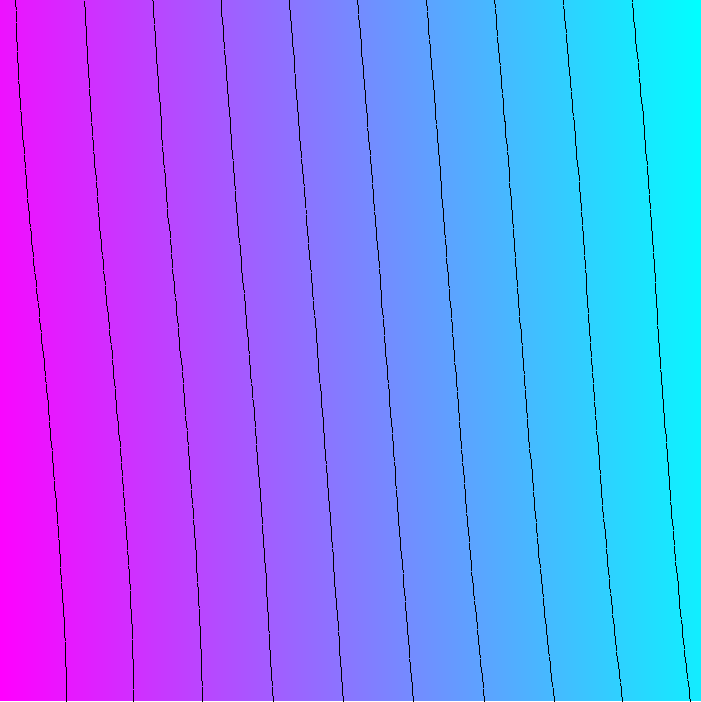}
        \caption{$\lambda_3=1.0002$}
    \end{subfigure}
    \begin{subfigure}{0.19\textwidth}
        \centering
        \includegraphics[width=\textwidth]{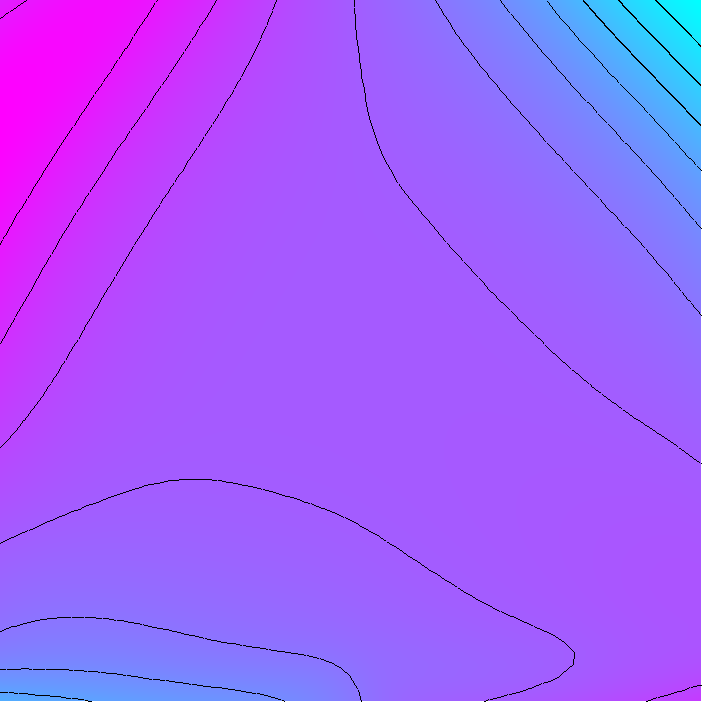}
        \caption{$\lambda_4=3.0973$}
    \end{subfigure}
    \begin{subfigure}{0.19\textwidth}
        \centering
        \includegraphics[width=\textwidth]{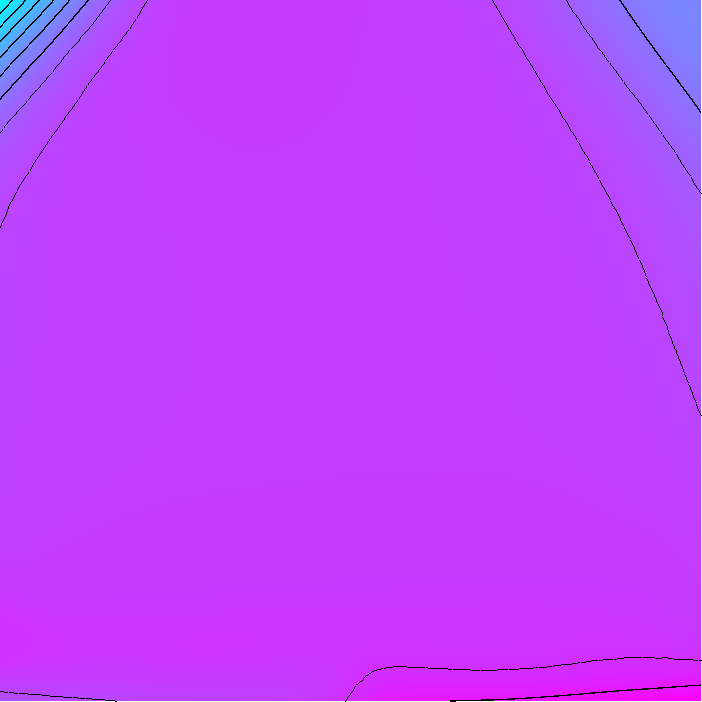}
        \caption{$\lambda_5=3.1311$}
    \end{subfigure}
    \begin{subfigure}{0.19\textwidth}
        \centering
        \includegraphics[width=\textwidth]{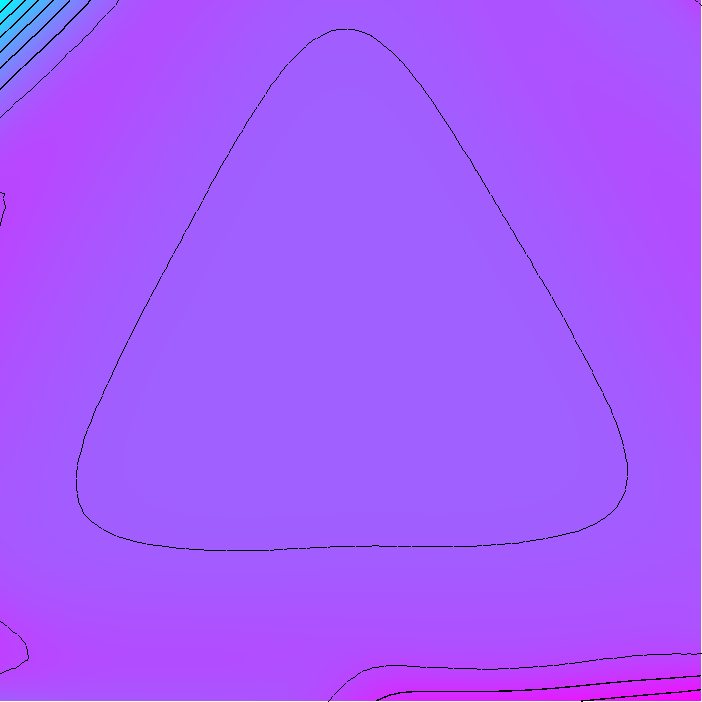}
        \caption{$\lambda_6=3.8463$}
    \end{subfigure}
 \caption{(Tri-modal $\mu_1$) Eigenvectors $u_2^W,\hdots,u_6^W$ of the diffusion operator $\mathcal{L}_{\mu_1}^{W}$ for  the constant metric $W^{(0)}=I_d  \frac{\trace(\Cov_\mu)}{d}$ (top row) and the computed metric $W^{(k)}$ obtained using $k=100$ iterations of Nesterov's acceleration.}
    \label{fig:modes_trimodal}
\end{figure}

\subsection{Optimal metrics}

Figure \ref{fig:optimalW} shows the optimal metric $W$ on the four test cases. To visualize the field of $2\times2$ positive-definite matrices, we plot the trace of $W$ (in log-scale for improved visibility) and the ellipses whose main directions are the eigenvectors of $W(x)$ to indicate the anisotropy of the field $W$.

For the tri-modal measure $\mu_1$, we see in Figure \ref{fig:optimalW_1} that $W$ is low and isotropic around the three modes, and signiﬁcantly high and anisotropic across the modes.
For the ring measure $\mu_2$, we observe in Figure \ref{fig:optimalW_2} that $W$ admits a singularity at the origin ($\trace(W) \approx 10^{10}$) and is anisotropic in regions where $\mu_2$ is high (the ring of radius $r$).

For the $H$-shaped measure $\mu_3$, Figure \ref{fig:optimalW_3} indicates a high value of $W$ in the ``bridge'' connecting the left and right parts of the domain.
It is important to notice that, for this benchmark, the resulting (reciprocal of the) Poincaré constant is $C(\mu_3,W)^{-1}=0.9166$ which is much higher than $1$ compared with the other test cases.
A credible explanation for this is that $\mu_3$ is not a moment measure, and so Theorem \ref{th:WstartMomentMap} does not apply.
In contrast, the measure $\mu_4\approx\mu_3$ has a convex support, and our algorithm yields a Poincaré constant with reciprocal $C(\mu_4,W^{(k)})^{-1}=0.9989$, which can be made arbitrarily close to $1$ with adding more number of iterations. We notice that in the region with low-probability density ($\d\mu_4\approx \varepsilon \d x$), the metric $W$ degenerates ($\trace(W)\approx \varepsilon^{-1}$) and is highly anisotropic.

\begin{figure}
\centering
 \begin{subfigure}{0.48\textwidth}
        \centering
        \includegraphics[width=\textwidth]{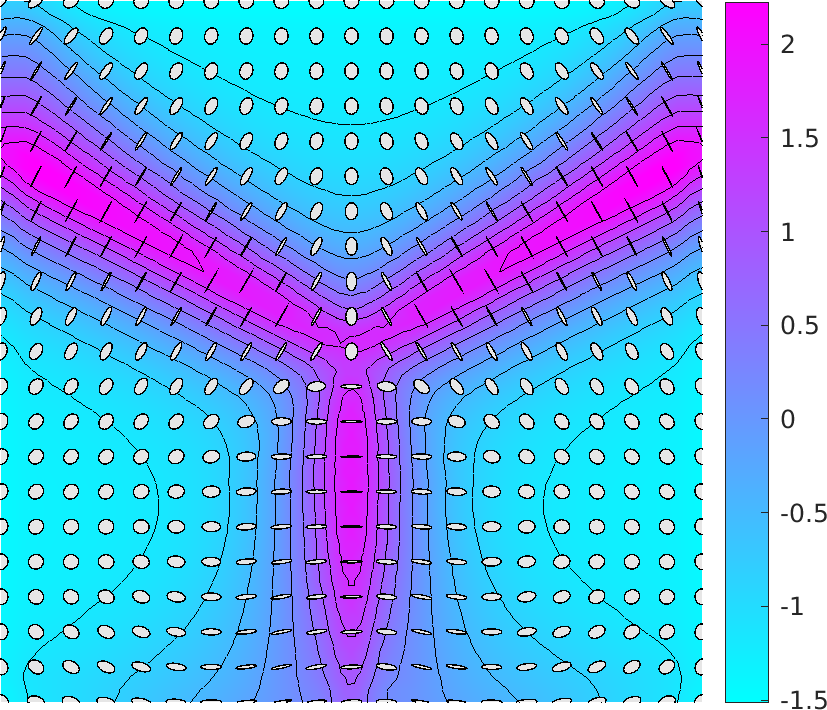}
        \caption{Tri-modal $\mu_1$, $C(\mu_1,W^{(k)})^{-1}=0.9998$}
        \label{fig:optimalW_1}
    \end{subfigure}
    \begin{subfigure}{0.48\textwidth}
        \centering
        \includegraphics[width=\textwidth]{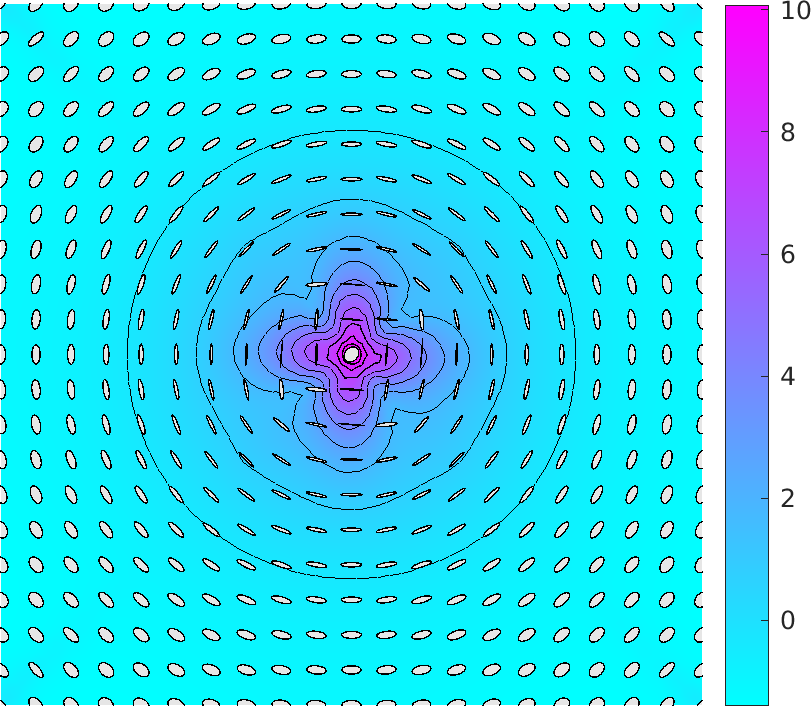}
        \caption{Ring $\mu_2$, $C(\mu_2,W^{(k)})^{-1}=0.9994$}
        \label{fig:optimalW_2}
    \end{subfigure}

    ~\\~\\

    \begin{subfigure}{0.48\textwidth}
        \centering
        \includegraphics[width=\textwidth]{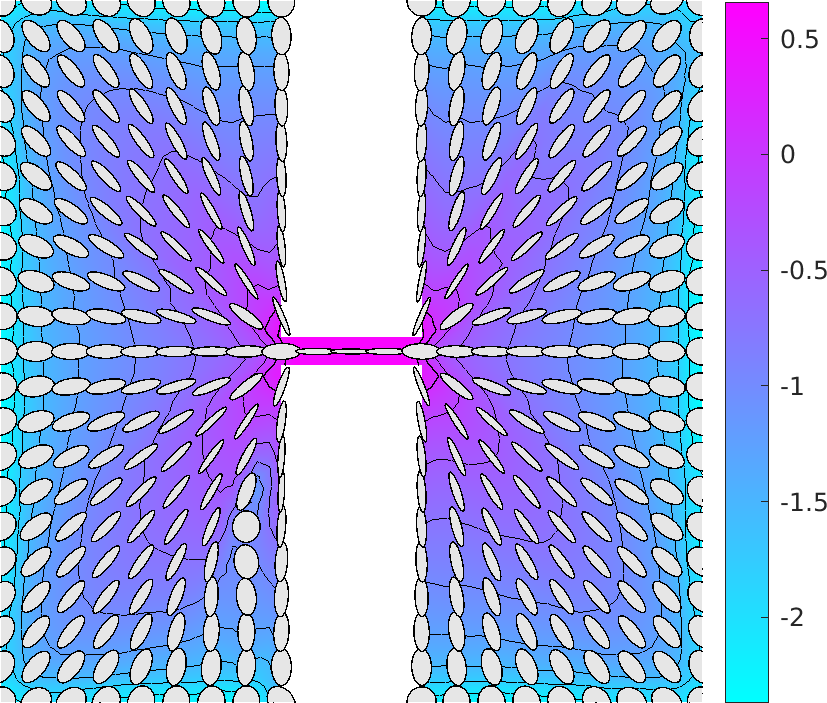}
        \caption{H-shaped $\mu_3$, $C(\mu_3,W^{(k)})^{-1}=0.9166$}
        \label{fig:optimalW_3}
    \end{subfigure}
    \begin{subfigure}{0.48\textwidth}
        \centering
        \includegraphics[width=\textwidth]{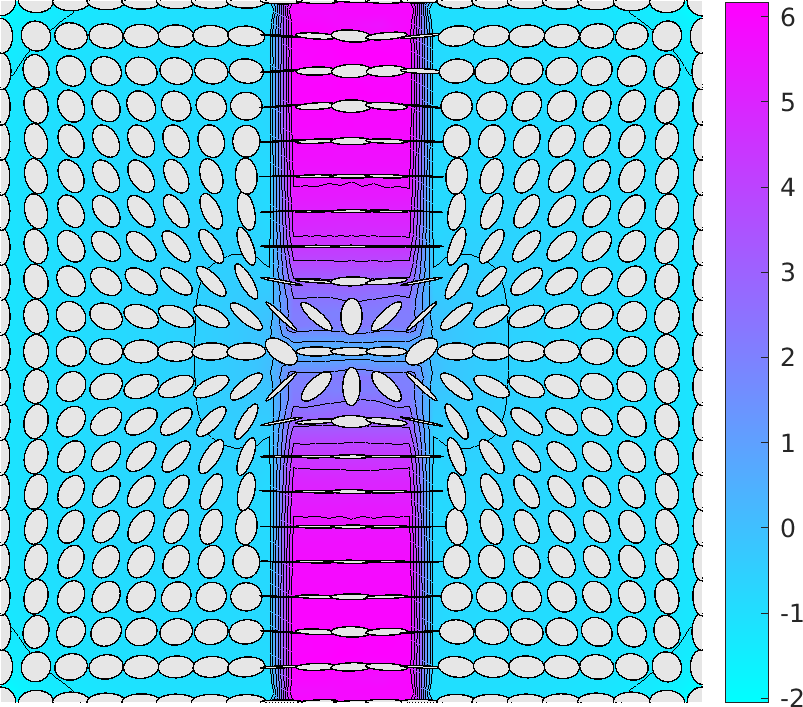}
        \caption{H-shaped $\mu_4$, $C(\mu_4,W^{(k)})^{-1}=0.9989$}
    \end{subfigure}
 \caption{Metric $W^{(k)}$ computed with $k=100$ iterations of Nesterov's acceleration.
 The colorbar corresponds to $\log_{10}(\trace(W^{(k)}))$, which is the amplitude of $W^{(k)}$ in log-scale. The ellipses represent the local anisotropy of $W^{(k)}$.}
    \label{fig:optimalW}
\end{figure}

\subsection{Application to the unadjusted Langevin algorithm (ULA)}

We consider now the Euler-Maruyama discretization of the Riemannian Langevin dynamic $\d X_t = (\div(W)- W\nabla V )\d t + \sqrt{2 W}\d B_t$ as in \eqref{eq:SDE_matrixWeighted}, which writes
\begin{equation}\label{eq:ULA}
 X_{n+1} = X_{n} +  (\div W(X_{n}) - W(X_{n})\nabla V(X_{n}) )\Delta t + \sqrt{2 \Delta t W(X_{n})} Z_{n},
\end{equation}
for some time step $\Delta t>0$. Here, $Z_1,Z_2,\dots $ are independent random vector drawn from $\mathcal{N}(0,I_d)$.
While Figure \ref{fig:optimalW} already shows the diffusion term $\sqrt{2 W}$ of the Riemannian Langevin SDE, Figure \ref{fig:drift} represents the drift term $\div(W)- W\nabla V$.
From it, we can observe how the new drift field connects different high-probability regions.
In particular, from Figure \ref{fig:drift_a}, we observe the non-preconditioned drift $\nabla V$ points towards the centres of the mixture components. While such drift pushes the particle $X_t$ towards high probability regions, it also prevents jumps from one mode to another. In comparison, the new drift field in Figure \ref{fig:drift_b} connects the different mixture components by pushing the particles to escape the modes, which intuitively improves the mixing speed of the SDE.
Likewise, in Figure \ref{fig:drift_d}, the drift $\nabla V$ only leads towards the high probability region but does not circulate along the ring. The drift field in Figure \ref{fig:drift_e} suggests leaving the high probability region to the centre of the ring, where it will be directed to other areas of the ring.
Regarding the uniform measure $\mu_3$ for which $\nabla V=0$, we see in Figure \ref{fig:drift_c} that the optimal metric $W$ creates a drift $\div W$ which pushes the particles towards the bottleneck of distribution so they can get to the other side. Regarding $\mu_4$, the cyan area in Figure \ref{fig:drift_f} provides additional leeway for the mixing to happen.

\begin{figure}
\centering
 \begin{subfigure}{0.32\textwidth}
        \centering
        \includegraphics[width=\textwidth]{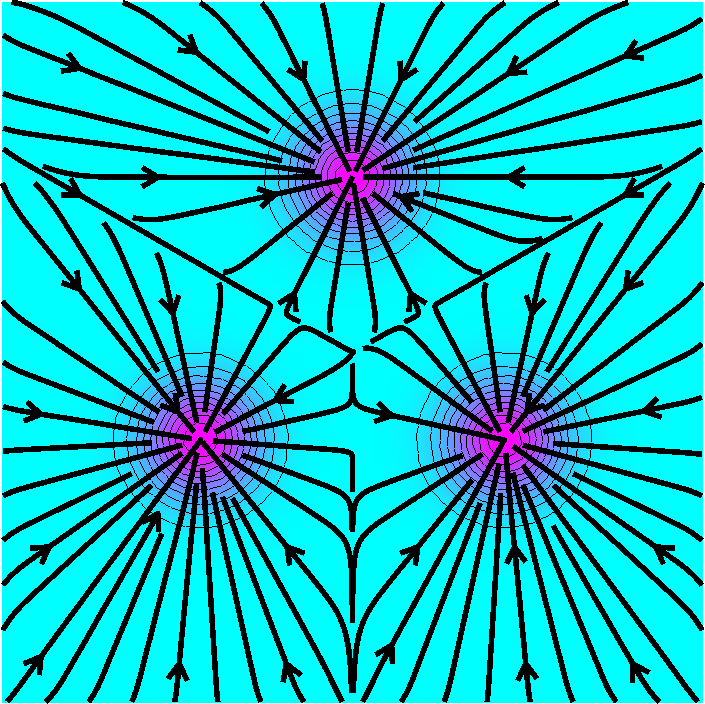}
        \caption{Tri-modal $\mu_1$, $W\propto I_d$}
        \label{fig:drift_a}
    \end{subfigure}
    \begin{subfigure}{0.32\textwidth}
        \centering
        \includegraphics[width=\textwidth]{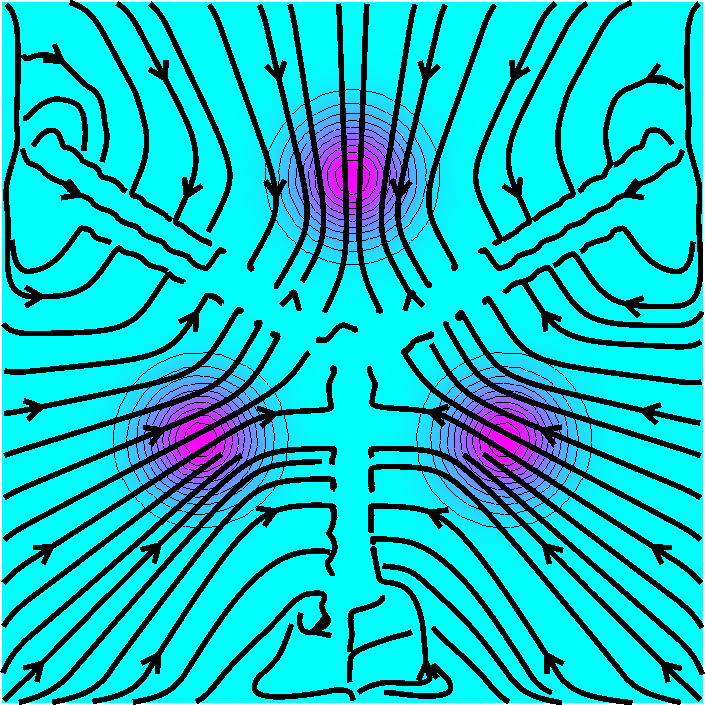}
        \caption{Tri-modal $\mu_1$, optimal $W$}
        \label{fig:drift_b}
    \end{subfigure}
    \begin{subfigure}{0.32\textwidth}
        \centering
        \includegraphics[width=\textwidth]{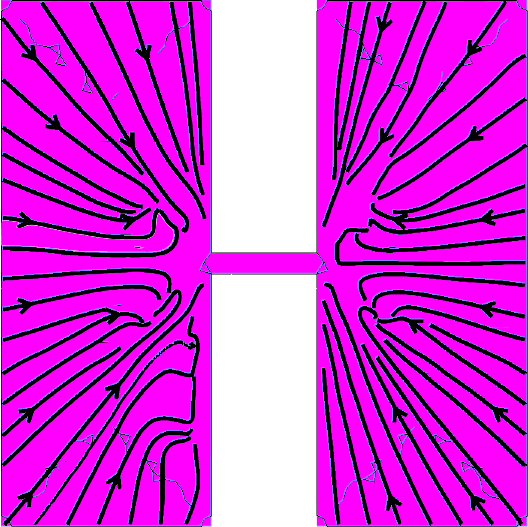}
        \caption{H-shaped $\mu_3$, optimal $W$}
        \label{fig:drift_c}
    \end{subfigure}

    ~\\~\\
    \begin{subfigure}{0.32\textwidth}
        \centering
        \includegraphics[width=\textwidth]{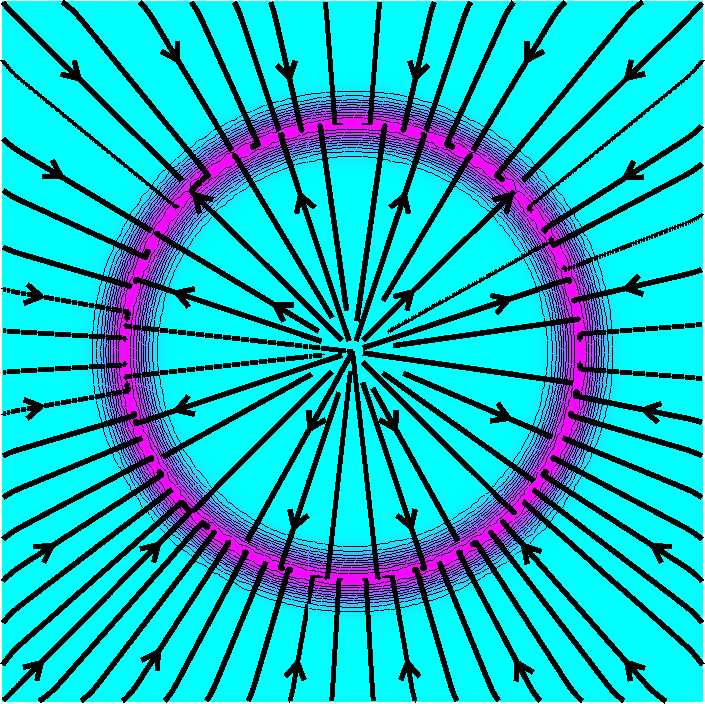}
        \caption{Ring $\mu_2$, $W\propto I_d$}
        \label{fig:drift_d}
    \end{subfigure}
    \begin{subfigure}{0.32\textwidth}
        \centering
        \includegraphics[width=\textwidth]{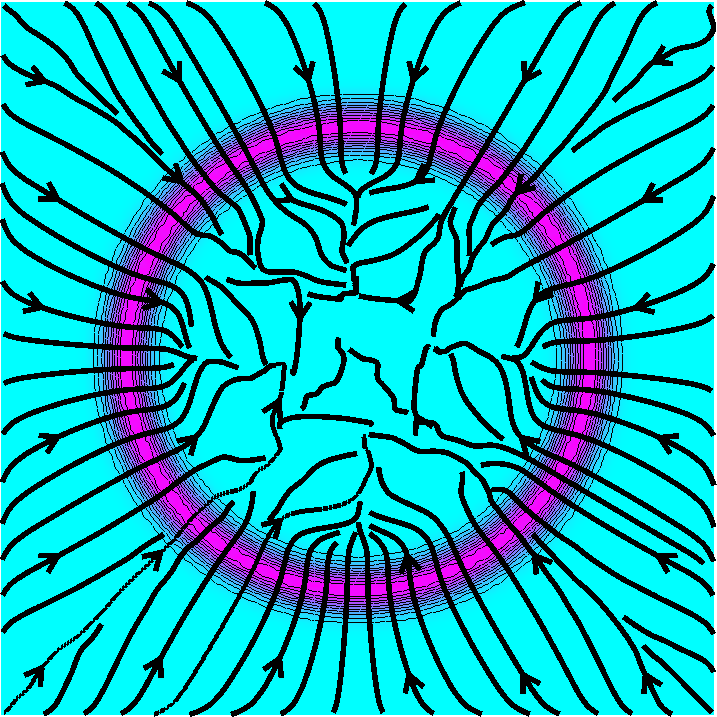}
        \caption{Ring $\mu_2$, optimal $W$}
        \label{fig:drift_e}
    \end{subfigure}
    \begin{subfigure}{0.32\textwidth}
        \centering
        \includegraphics[width=\textwidth]{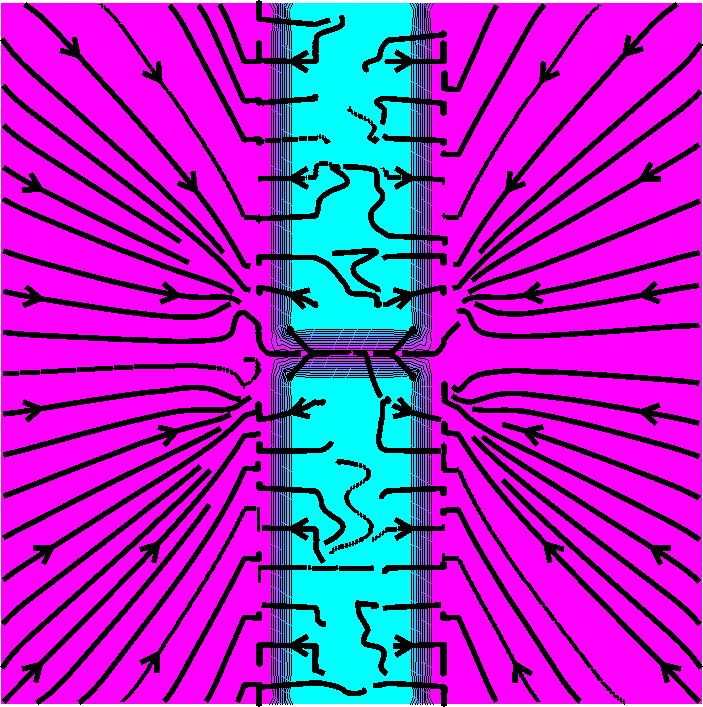}
        \caption{H-shaped $\mu_4$, optimal $W$}
        \label{fig:drift_f}
    \end{subfigure}

 \caption{Drift $(\div(W)- W\nabla V_i)$ for the four considered measures $\d\mu_i\propto\exp(-V_i)\d x$, $i=1,\hdots,4$, with either the constant metric $W\propto I_d$ (Figures \ref{fig:drift_a} and \ref{fig:drift_d}) or the optimal metric $W$.
  }
    \label{fig:drift}
\end{figure}

Finally, Figure \ref{fig:ULA} shows the particle trace of $5000$ iterations of the ULA algorithm \eqref{eq:ULA}. We observe that the unpreconditioned ULA (top row) has a large bias compared to the preconditioned ULA (middle row). In particular, the preconditioned ULA is more robust with respect to large time steps $\Delta t$ compared to the non-preconditioned ULA, see for instance the difference between Figure \ref{fig:ULA_b} and Figure \ref{fig:ULA_f}. For the ring measure $\mu_2$, having an anisotropic diffusion term (remember Figure \ref{fig:optimalW_2}) permits a better mixing of the particles.

\begin{figure}
\centering
 \begin{subfigure}{0.24\textwidth}
        \centering
        \includegraphics[width=\textwidth]{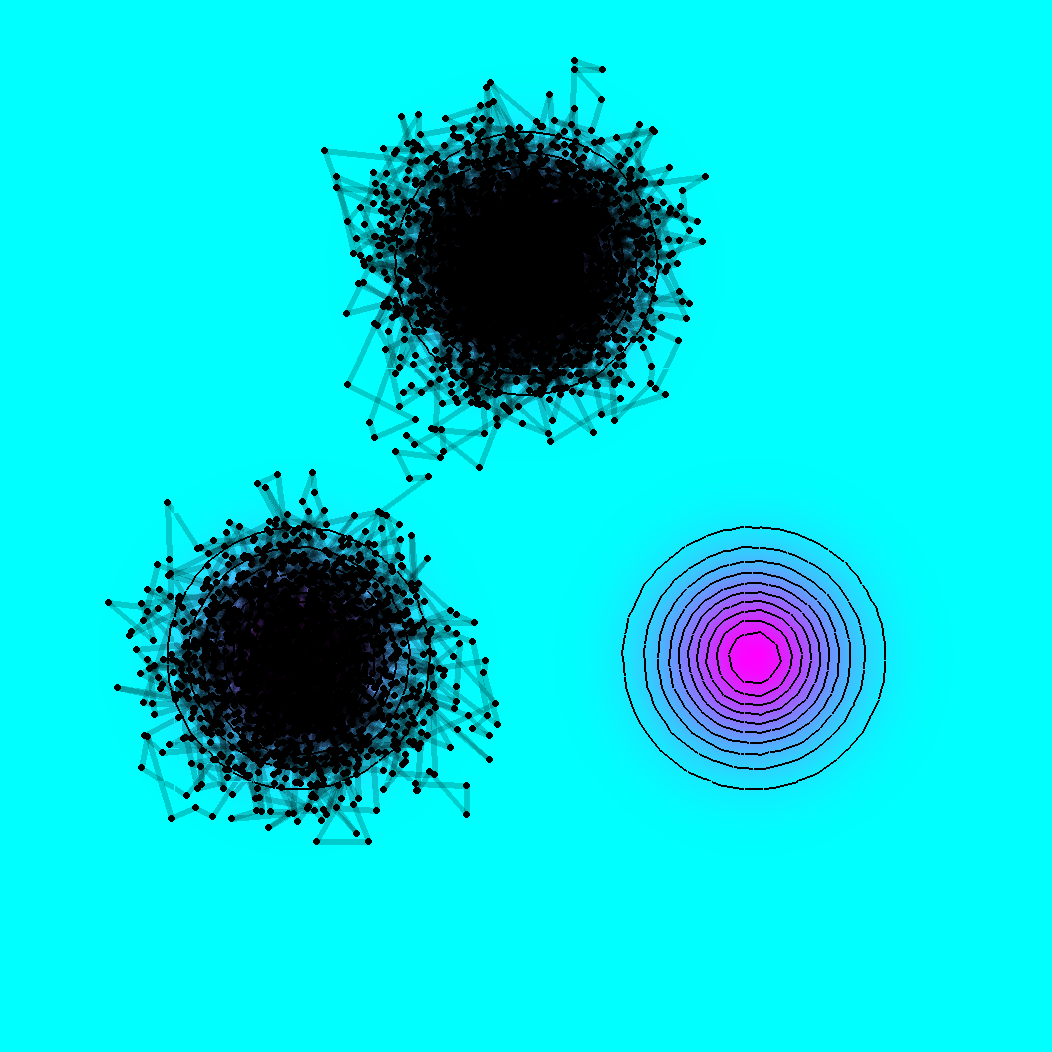}
        \caption{ $\Delta t = 0.015$}
        \label{fig:ULA_a}
    \end{subfigure}
    \begin{subfigure}{0.24\textwidth}
        \centering
        \includegraphics[width=\textwidth]{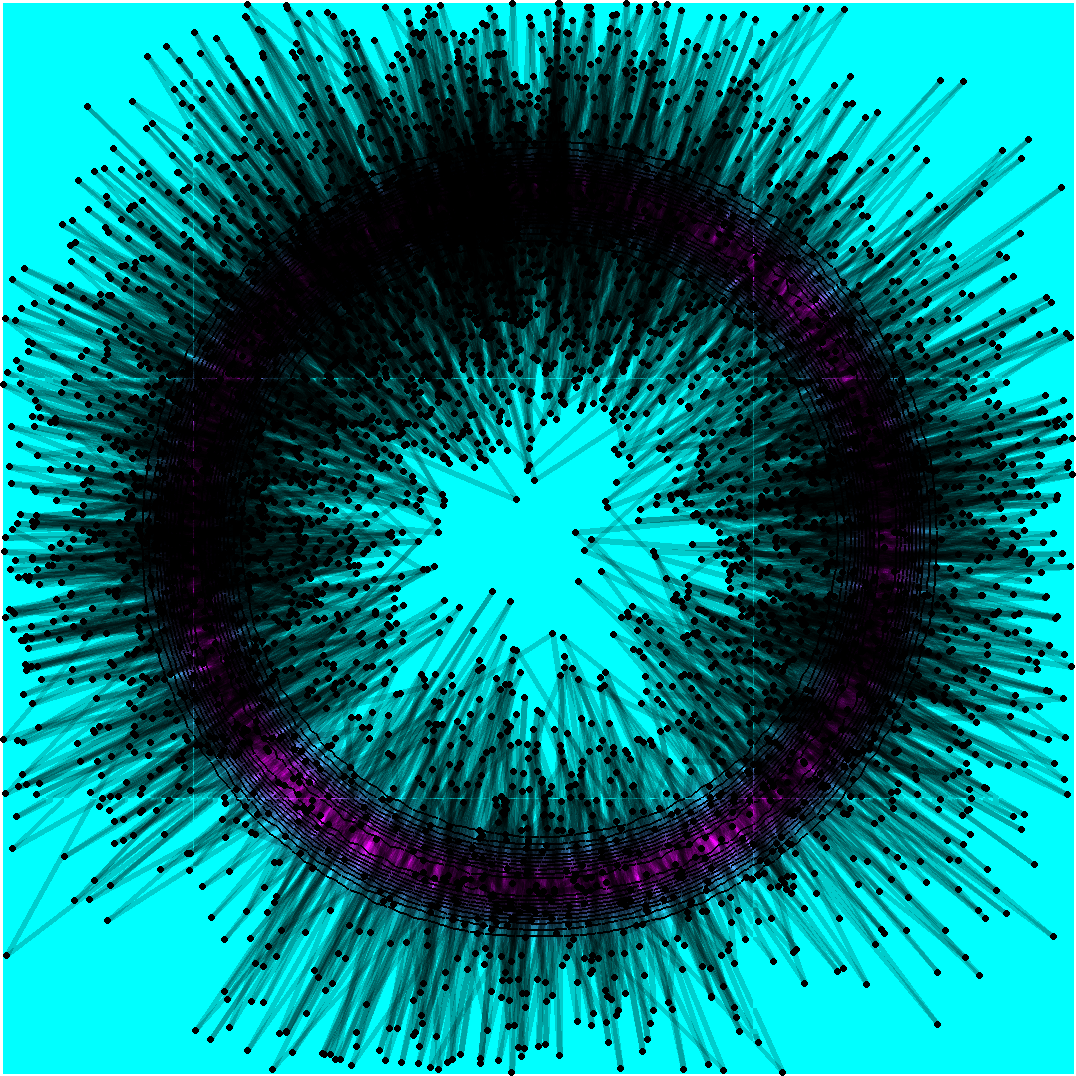}
        \caption{ $\Delta t = 0.015$}
        \label{fig:ULA_b}
    \end{subfigure}
    \begin{subfigure}{0.24\textwidth}
        \centering
        \includegraphics[width=\textwidth]{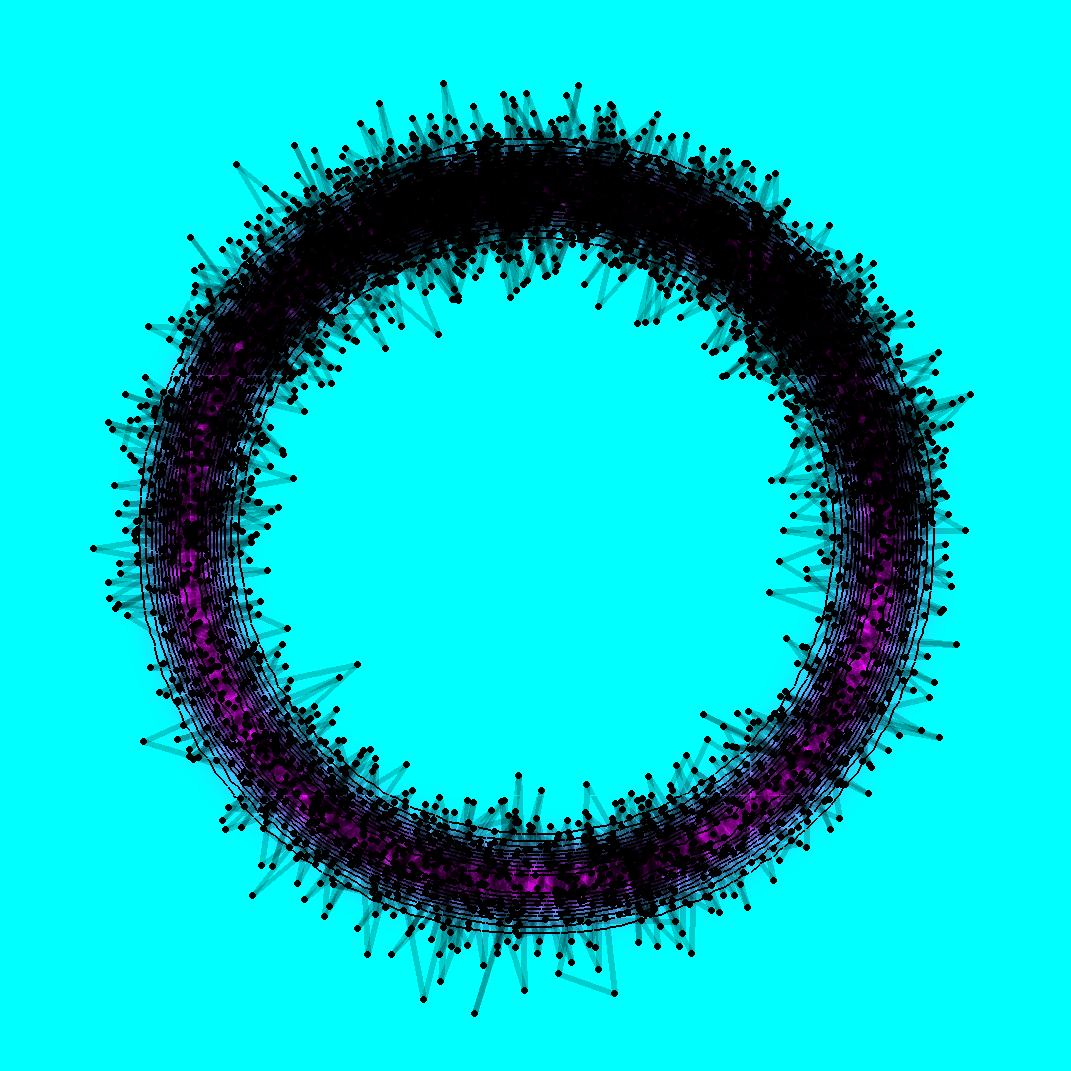}
        \caption{ $\Delta t = 0.01$}
        \label{fig:ULA_c}
    \end{subfigure}
    \begin{subfigure}{0.24\textwidth}
        \centering
        \includegraphics[width=\textwidth]{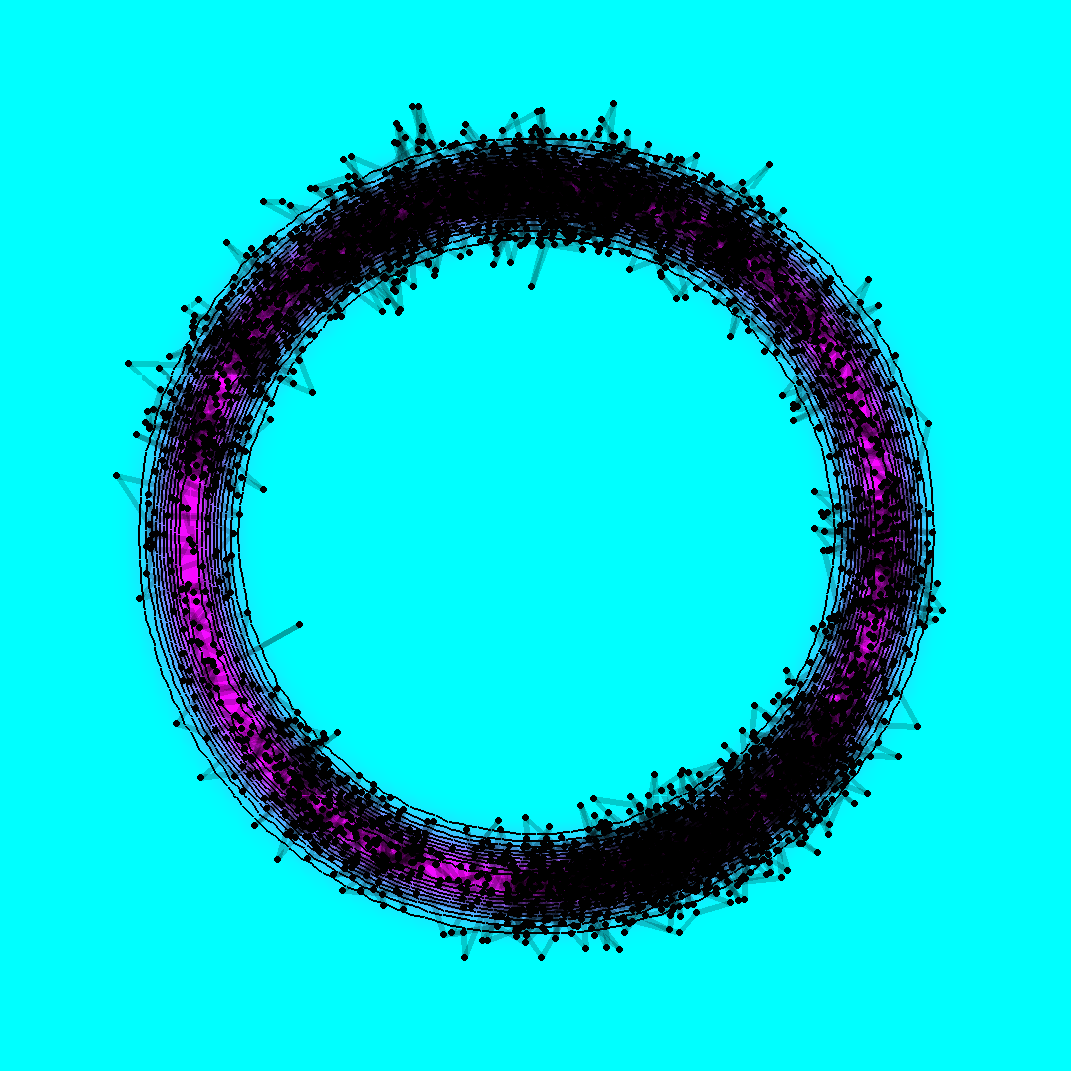}
        \caption{ $\Delta t = 0.005$}
        \label{fig:ULA_d}
    \end{subfigure}

    ~\\
    \begin{subfigure}{0.24\textwidth}
        \centering
        \includegraphics[width=\textwidth]{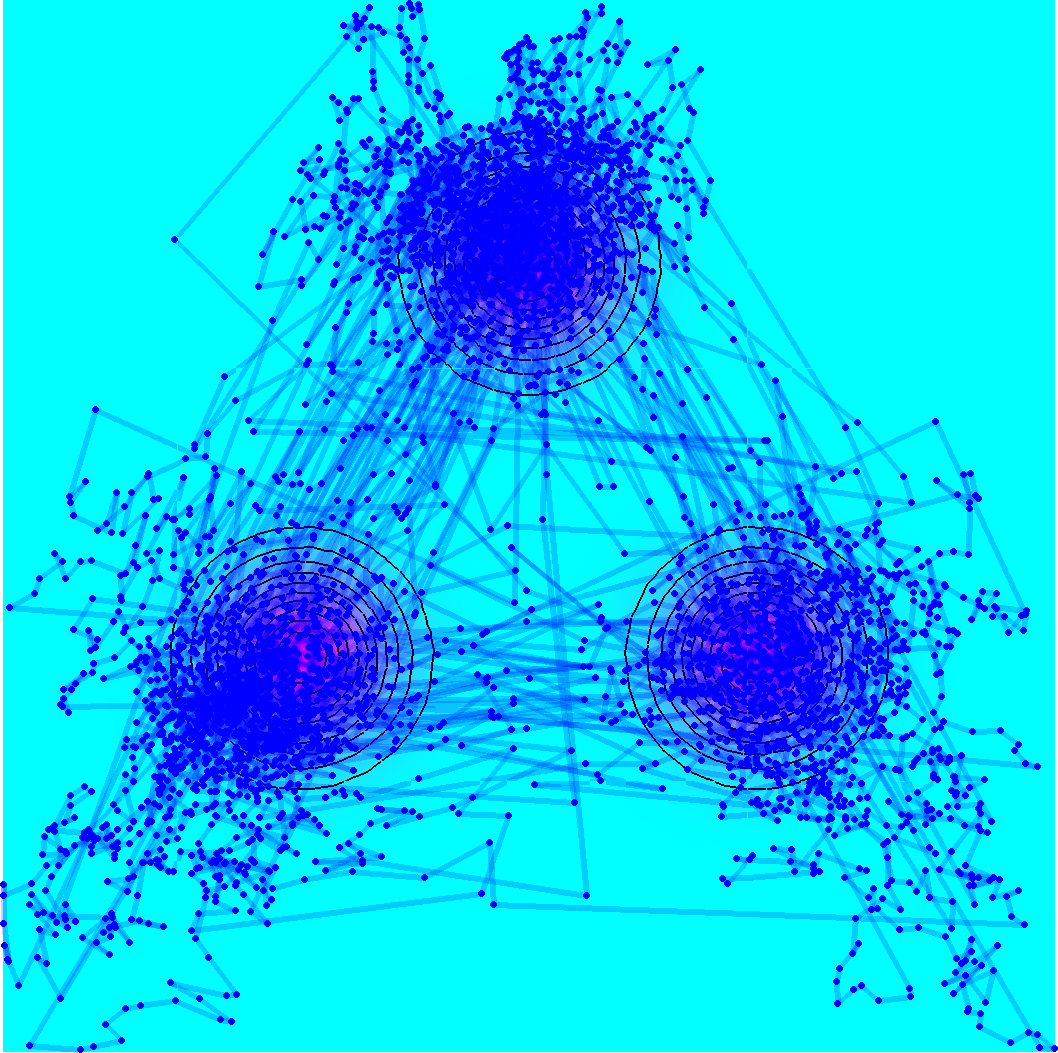}
        \caption{ $\Delta t = 0.015$}
        \label{fig:ULA_e}
    \end{subfigure}
    \begin{subfigure}{0.24\textwidth}
        \centering
        \includegraphics[width=\textwidth]{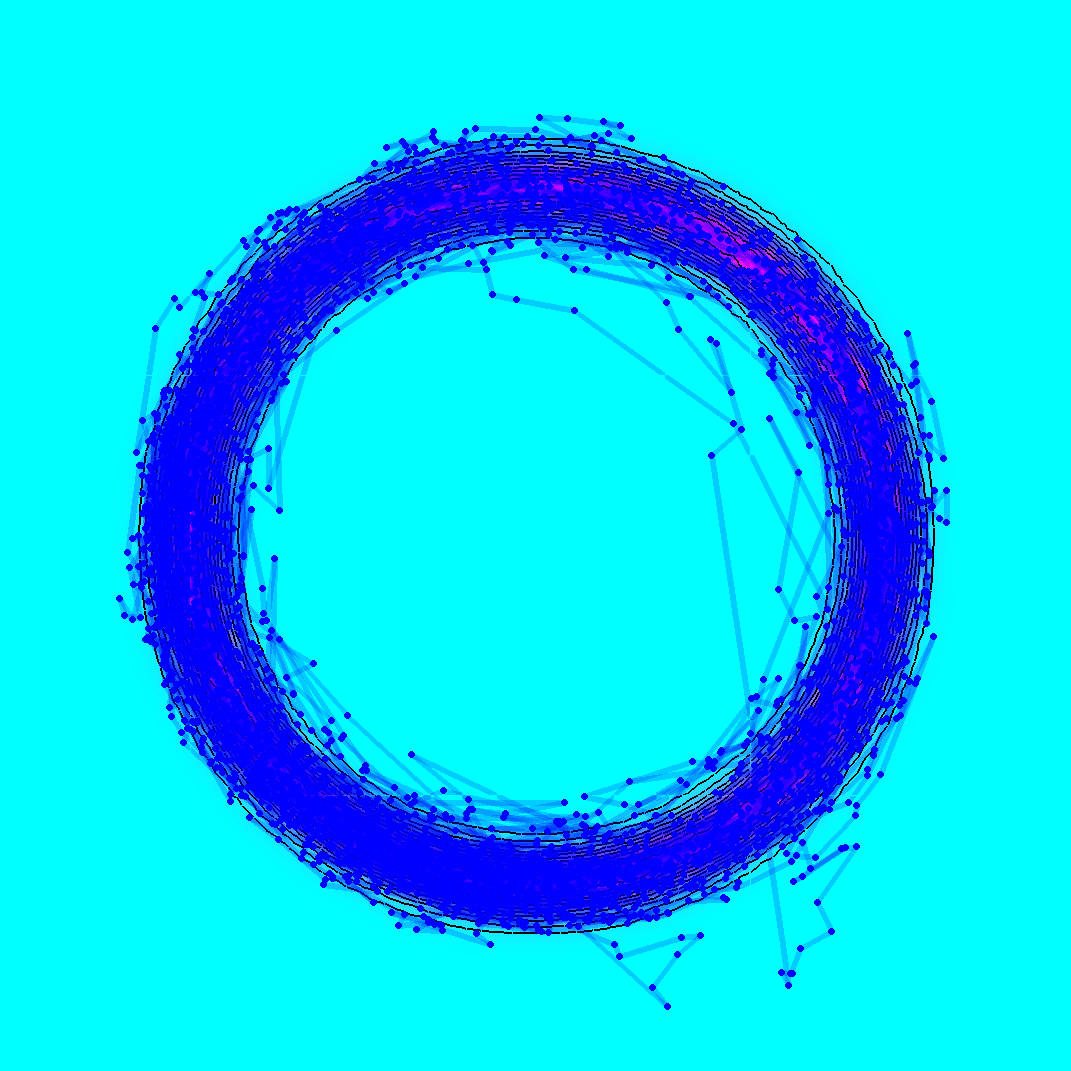}
        \caption{ $\Delta t = 0.015$}
        \label{fig:ULA_f}
    \end{subfigure}
    \begin{subfigure}{0.24\textwidth}
        \centering
        \includegraphics[width=\textwidth]{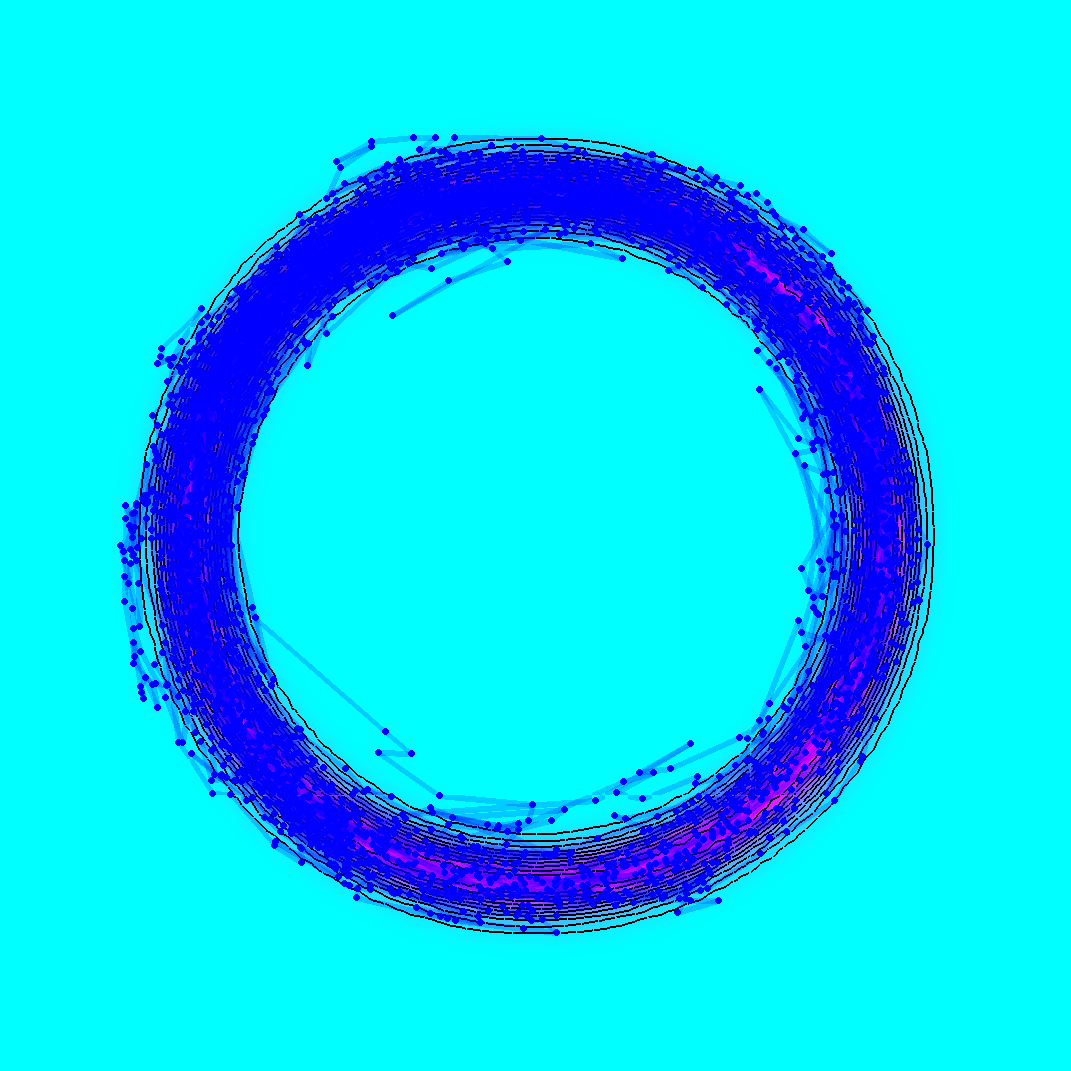}
        \caption{ $\Delta t = 0.01$}
        \label{fig:ULA_g}
    \end{subfigure}
    \begin{subfigure}{0.24\textwidth}
        \centering
        \includegraphics[width=\textwidth]{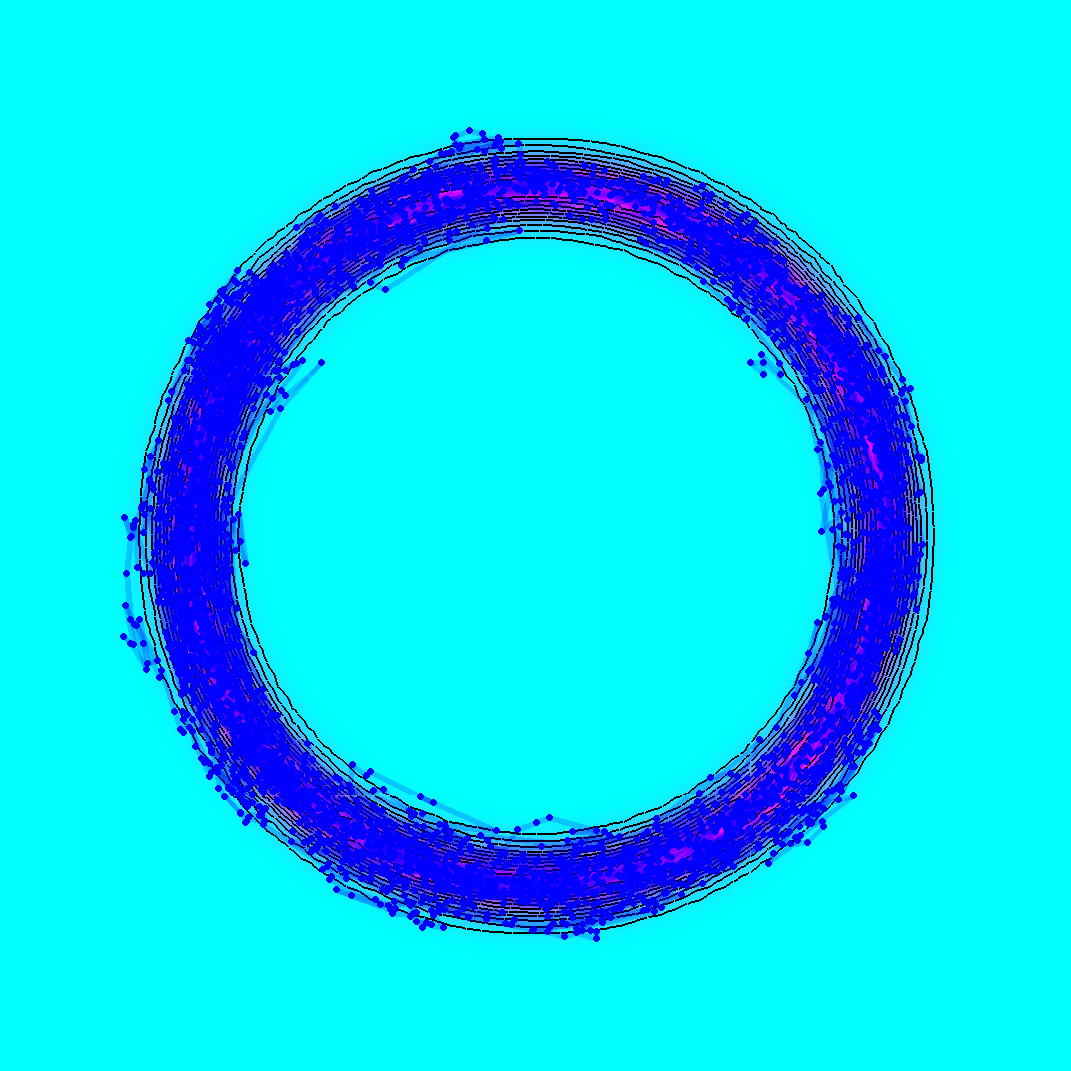}
        \caption{ $\Delta t = 0.005$}
        \label{fig:ULA_h}
    \end{subfigure}


 \caption{Trace of 5000 iterations of ULA \eqref{eq:ULA} for different $\Delta t$ using either the unpreconditioned ULA ($W\propto I_d$, top row) or the precondtionned ULA (optimal $W$, bottom row).}
    \label{fig:ULA}
\end{figure}

\section{Conclusion}
This paper considers adding a Riemannian metric to the Poincaré inequality and investigates the notion of optimal metric by minimizing the associated Poincaré constant. Several important properties of the optimal metric have been obtained and a gradient-based numerical algorithm is proposed to compute it. We numerically demonstrate our theory on four different 2D probability distributions, where the optimal matrix field reveals the geometric structure of these distributions. These metrics are then exploited to precondition the Langevin SDE.

While this work has shown various important properties and significant benefits of the optimal metric, much is left to be done, especially on the computational side. In the following, we elaborate on a few possible directions for future development.
\begin{itemize}

 \item Although Theorem \ref{th:WstartMomentMap} establishes the existence of an optimal metric, the issue of its uniqueness remains unresolved. Given that optimal metrics must be positive-semidefinite Stein kernels (see Theorem \ref{th:Stein}), one approach to investigate the uniqueness would be to ascertain whether positive-semidefinite Stein kernels are indeed unique.

 \item Stein kernels have proven effectiveness in constructing statistical tests to discern whether data samples originate from a particular distribution \cite{liu2016kernelized}. Assessing whether the positive-semidefinite Stein kernel we obtain via the optimal metric improves upon existing statistical tests is of natural interest.

 \item While Algorithm \ref{algo} permits accurate computation of the optimal metric, its application to large-scale problems requires further investigations. In particular, the finite element methods we employed become prominently expensive if the underlying dimension $d$ is high, which makes it inapplicable for high-dimension problems. Kernel-based methods as in \cite{pillaud2020statistical,pillaud2023kernelized} or neural networks in the spirit of \cite{belkacemi2023autoencoders} may be able to offer a viable to tackle the dimensionality issue.

 \item An additional concern regarding Algorithm \ref{algo} is its dependency on global knowledge of $\mu$, which may not be readily available beforehand. Algorithms capable of estimating $W$ using available data (e.g., a few samples drawn from $\mu$ or some moments estimates) or dynamically adapting to new information (e.g., during an MCMC procedure that samples from $\mu$) would present more compelling approaches.

\end{itemize}

\begin{appendix}
\section{Proof of Proposition \ref{prop:EquivalentSDE_WPI}}\label{proof:EquivalentSDE_WPI}

 The Fokker-Planck equation satisfied by the density $\mu_t\propto \exp(-V_t)$ of $X_t$ solution to \eqref{eq:SDE_matrixWeighted} writes
 \begin{align}
  \partial_t \mu_t
  &= - \nabla\cdot ( (\div(W)- W\nabla V ) \mu_t ) + \nabla\cdot( \div(W \mu_t) ) \nonumber\\
  &= - \nabla\cdot ( (\div(W)- W\nabla V ) \mu_t ) + \nabla\cdot( \div(W)\mu_t+ W \nabla \mu_t ) \nonumber\\
  &= - \nabla\cdot ( - (W\nabla V)  \mu_t -   W \nabla \mu_t ) \nonumber\\
  &= -\nabla\cdot ( \mu_t W (\nabla V_t-\nabla V)  ). \label{eq:FokkerPlanck}
 \end{align}
 Because $ \chi^2(\mu_t,\mu) = \int (\frac{\mu_t}{\mu})^2 \d\mu -1$ we can write
 \begin{align}
  \partial_t \chi^2(\mu_t,\mu)
  &=  2 \int  \partial_t \left(\frac{\mu_t}{\mu}\right) \mu_t \d x \nonumber\\
  &\overset{\eqref{eq:FokkerPlanck}}{=} -  2 \int  \nabla\cdot ( \mu_t W (\nabla V_t-\nabla V)  ) \left(\frac{\mu_t}{\mu}\right) \d x \nonumber\\
  &= 2 \int  ( \mu_t W (\nabla V_t-\nabla V)  )^\top  \nabla \left(\frac{\mu_t}{\mu}\right) \d x \nonumber\\
  &= -2 \int   \mu_t \nabla \log\left(\frac{\mu_t}{\mu}\right)^\top  W \nabla \left(\frac{\mu_t}{\mu}\right) \d x \nonumber\\
  &= -2 \int    \nabla \left(\frac{\mu_t}{\mu}\right)^\top  W \nabla \left(\frac{\mu_t}{\mu}\right) \d\mu \label{eq:tmp789632}
 \end{align}
 To show that $C(\mu,W)\leq C$ implies \eqref{eq:EquivalentSDE_WPI}, we apply the Riemannian Poincaré inequality \eqref{eq:Poincare_Riemannian} to \eqref{eq:tmp789632} to obtain
 \begin{align}
  \partial_t \chi^2(\mu_t,\mu)
  &\overset{\eqref{eq:Poincare_Riemannian}}{\leq} -\frac{2}{C} \Var_\mu( \mu_t/\mu )
  = -\frac{2}{C} \chi^2(\mu_t,\mu).
 \end{align}
 This shows that the function $t\mapsto e^{2t/C}\chi^2(\mu_t,\mu)$ is decreasing (because its derivative $2/C e^{2t/C} \chi^2(\mu_t,\mu)+ e^{2t/C}\partial_t \chi^2(\mu_t,\mu)\leq0$ is nonpositive) so that $\chi^2(\mu_0,\mu)\geq e^{2t/C}\chi^2(\mu_t,\mu)$, which is \eqref{eq:EquivalentSDE_WPI}.

 We now show that if \eqref{eq:EquivalentSDE_WPI} holds for any probability measure $\mu_0$ then $C(\mu,W)\leq C$.
 Let $f_0$ be a positive function such that $\int f_0 \d\mu=1$. Letting $\d\mu_0 = f_0 \d\mu$, inequality \eqref{eq:EquivalentSDE_WPI} becomes  $\chi^2(\mu_t,\mu)\leq e^{-2t/C} \Var_\mu(f_0) $.  A Taylor expansion around $t=0$ yields
 \begin{equation}\label{eq:tmp78960}
  \chi^2(\mu_0,\mu)+ \partial_t \chi^2(\mu_t,\mu)_{|t=0}  t + \mathcal{O}(t^2) \leq (1-2t/C) \Var_\mu(f_0) + \mathcal{O}(t^2),
 \end{equation}
 for any $t\geq0$.
 By \eqref{eq:tmp789632} we have $\partial_t \chi^2(\mu_t,\mu)_{|t=0}=-2 \int  \nabla f_0^\top  W \nabla f_0 \d\mu $ so that \eqref{eq:tmp78960} yields
 \begin{equation}\label{eq:tmp257682}
   \Var_\mu(f_0) \leq  C \int  \nabla f_0^\top  W \nabla f_0 \d\mu,
 \end{equation}
 for any positive function $f_0$ such that $\int f_0\d\mu=1$.
 It remains to show that it also holds for any smooth function $f$ with $\int  \nabla f^\top  W \nabla f \d\mu <\infty$.
 If such a $f$ is lower bounded, then $f_0=(f-\inf f)/\alpha$ with $\alpha=\int f\d\mu  -\inf f$ is a positive function with $\int f_0\d\mu=1$ and so \eqref{eq:tmp257682} yields $\alpha^{-2}\Var_\mu(f) \leq  \alpha^{-2} C \int  \nabla f^\top  W \nabla f \d\mu$. We conclude that any smooth lower-bounded $f$ satisfies the Riemannian Poincaré inequality.
 Finally, if $f$ is not lower bounded, we introduce the sequence of lower-bounded function $f_n = \max\{f;-n\}$ so that
 $$
  \Var_\mu(f_n) \leq   C \int  \nabla f_n^\top  W \nabla f_n \d\mu .
 $$
 Taking the limit when $n\rightarrow\infty$ we obtain $\Var_\mu(f) \leq   C \int  \nabla f^\top  W \nabla f \d\mu$.

\section{Proof of Corollary \ref{cor:CLT}}\label{proof:CLT}

The proof is a simple adaptation of the proof of Theorem 3.3 in \cite{fathi2019stein}.
By Theorem \ref{th:Stein}, $W_N$ defined in \eqref{eq:Wiid} is a Stein kernel so that
\begin{align*}
 S_p(\mu_N|\gamma)^p
 &\leq  \int \left\|  \E\left[\left. \sum_{i=1}^N \frac{W(X_i)-I_d}{N} \right|\overline{X}_N \right] \right\|_F^p \d\mu_N \\
 \text{(Jensen's inequality) }&\leq  \int \E\left[\left. \left\|   \sum_{i=1}^N \frac{W(X_i)-I_d}{N}  \right\|_F^p \right|\overline{X}_N \right] \d\mu_N \\
 &=   \E\left[ \left\|   \sum_{i=1}^N \frac{W(X_i)-I_d}{N}  \right\|_F^p \right],
\end{align*}
Letting $Y_{ikl} = W_{kl}(X_i)-\delta_{kl}$, we deduce
\begin{align*}
S_p(\mu_N|\gamma)^p
&\leq \E\left[ \left(  \frac{1}{d^2}  \sum_{kl=1}^d d^2\left( \frac{1}{N}\sum_{i=1}^N  Y_{ikl} \right)^2  \right)^{p/2} \right] \\
\text{(Jensen's inequality) }&\leq \frac{1}{d^2}  \sum_{kl=1}^d \E\left[ \left(  d^2\left( \frac{1}{N}\sum_{i=1}^N  Y_{ikl} \right)^2  \right)^{p/2} \right] \\
&=d^{p-2} N^{-p} \sum_{kl=1}^d \E\left[ \left(  \sum_{i=1}^N  Y_{ikl} \right)^p \right] \\
\text{(Rosenthal) }&\leq d^{p-2} N^{-p} \sum_{kl=1}^d K_p \max\left\{  \sum_{i=1}^N \E[Y_{ikl}^{p}]; \left(\sum_{i=1}^N\E[Y_{ikl}^2 ]\right)^{p/2} \right\}
\end{align*}
where we applied the Rosenthal inequality \cite{ibragimov2002exact} in the last step. Here, $K_p$ is a constant depending only on $p$.
On one side we have $\sum_{i=1}^N \E[Y_{ikl}^{p}] = N \E\left[  (W_{kl}(X)-\delta_{kl})^{p}\right]$ and, on the other side, a Hölder inequality yields
\begin{align*}
 \left(\sum_{i=1}^N\E[Y_{ikl}^2 ]\right)^{p/2}
 =N^{p/2} \E[ (W_{kl}(X)-\delta_{kl})^2 ] ^{p/2}
 \leq N^{p/2} \E[ (W_{kl}(X)-\delta_{kl})^p ] .
\end{align*}
Thus we deduce
\begin{align*}
S_p(\mu_N|\gamma)^p
&\leq d^{p-2} N^{-p} \sum_{kl=1}^d K_p \E[  (W_{kl}(X)-\delta_{kl})^{p}] \max\left\{  N  ; N^{p/2}   \right\} \\
&= d^{p-2} N^{-p/2}   K_p \int \sum_{kl=1}^d   (W_{kl}(X)-\delta_{kl})^{p} \d\mu
\end{align*}
Together with Proposition 3.1 in \cite{fathi2019stein}, which  establishes $\mathcal{W}_p(\mu_N,\gamma)\leq C_p' S_p(\mu_N|\gamma)$ for another constant $C_p'$ depending only on $p$, we conclude the proof.

\end{appendix}

{
\bibliographystyle{siam}
\bibliography{references}

\begin{thebibliography}{10}

\bibitem{acosta2004optimal}
{\sc G.~Acosta and R.~Dur{\'a}n}, {\em An optimal poincar{\'e} inequality in
  l$^1$ for convex domains}, Proceedings of the american mathematical society,
  132 (2004), pp.~195--202.

\bibitem{allaire2007numerical}
{\sc G.~Allaire}, {\em Numerical analysis and optimization: an introduction to
  mathematical modelling and numerical simulation}, OUP Oxford, 2007.

\bibitem{andrieu2022comparison}
{\sc C.~Andrieu, A.~Lee, S.~Power, and A.~Q. Wang}, {\em Comparison of markov
  chains via weak poincar{\'e} inequalities with application to pseudo-marginal
  mcmc}, The Annals of Statistics, 50 (2022), pp.~3592--3618.

\bibitem{andrieu2022poincar}
\leavevmode\vrule height 2pt depth -1.6pt width 23pt, {\em Poincar\'e
  inequalities for markov chains: a meeting with cheeger, lyapunov and
  metropolis}, arXiv preprint arXiv:2208.05239,  (2022).

\bibitem{bakry2008simple}
{\sc D.~Bakry, F.~Barthe, P.~Cattiaux, and A.~Guillin}, {\em {A simple proof of
  the Poincaré inequality for a large class of probability measures}},
  Electronic Communications in Probability, 13 (2008), pp.~60 -- 66.

\bibitem{bakry2006diffusions}
{\sc D.~Bakry and M.~{\'E}mery}, {\em Diffusions hypercontractives}, in
  S{\'e}minaire de Probabilit{\'e}s XIX 1983/84: Proceedings, Springer, 2006,
  pp.~177--206.

\bibitem{bakry2014analysis}
{\sc D.~Bakry, I.~Gentil, M.~Ledoux, et~al.}, {\em Analysis and geometry of
  Markov diffusion operators}, vol.~103, Springer, 2014.

\bibitem{barbour2005introduction}
{\sc A.~D. Barbour and L.~H.~Y. Chen}, {\em An introduction to Stein's method},
  vol.~4, World Scientific, 2005.

\bibitem{bebendorf2003note}
{\sc M.~Bebendorf}, {\em A note on the poincar{\'e} inequality for convex
  domains}, Zeitschrift f{\"u}r Analysis und ihre Anwendungen, 22 (2003),
  pp.~751--756.

\bibitem{belkacemi2023autoencoders}
{\sc Z.~Belkacemi, M.~Bianciotto, H.~Minoux, T.~Leli{\`e}vre, G.~Stoltz, and
  P.~Gkeka}, {\em Autoencoders for dimensionality reduction in molecular
  dynamics: Collective variable dimension, biasing, and transition states}, The
  Journal of Chemical Physics, 159 (2023).

\bibitem{berman2013real}
{\sc R.~J. Berman and B.~Berndtsson}, {\em Real monge-amp{\`e}re equations and
  k{\"a}hler-ricci solitons on toric log fano varieties}, in Annales de la
  Facult{\'e} des sciences de Toulouse: Math{\'e}matiques, vol.~22, 2013,
  pp.~649--711.

\bibitem{bobkov1996variance}
{\sc S.~G. Bobkov and C.~Houdr{\'e}}, {\em Variance of lipschitz functions and
  an isoperimetric problem for a class of product measures}, Bernoulli,
  (1996), pp.~249--255.

\bibitem{bobkov2000brunn}
{\sc S.~G. Bobkov and M.~Ledoux}, {\em From brunn-minkowski to brascamp-lieb
  and to logarithmic sobolev inequalities}, Geometric and Functional Analysis,
  10 (2000), pp.~1028--1052.

\bibitem{bobkov2009weighted}
\leavevmode\vrule height 2pt depth -1.6pt width 23pt, {\em Weighted
  poincar{\'e}-type inequalities for cauchy and other convex measures}, The
  Annals of Probability, 37 (2009), pp.~403--427.

\bibitem{bonnefont2016note}
{\sc M.~Bonnefont, A.~Joulin, and Y.~Ma}, {\em A note on spectral gap and
  weighted poincar{\'e} inequalities for some one-dimensional diffusions},
  ESAIM: Probability and Statistics, 20 (2016), pp.~18--29.

\bibitem{bonnefont2016spectral}
\leavevmode\vrule height 2pt depth -1.6pt width 23pt, {\em Spectral gap for
  spherically symmetric log-concave probability measures, and beyond}, Journal
  of Functional Analysis, 270 (2016), pp.~2456--2482.

\bibitem{boyd2004convex}
{\sc S.~Boyd, S.~P. Boyd, and L.~Vandenberghe}, {\em Convex optimization},
  Cambridge university press, 2004.

\bibitem{brascamp1976extensions}
{\sc H.~J. Brascamp and E.~H. Lieb}, {\em On extensions of the brunn-minkowski
  and pr{\'e}kopa-leindler theorems, including inequalities for log concave
  functions, and with an application to the diffusion equation}, Journal of
  functional analysis, 22 (1976), pp.~366--389.

\bibitem{brezis2011functional}
{\sc H.~Brezis and H.~Br{\'e}zis}, {\em Functional analysis, Sobolev spaces and
  partial differential equations}, vol.~2, Springer, 2011.

\bibitem{cacoullos1982upper}
{\sc T.~Cacoullos}, {\em On upper and lower bounds for the variance of a
  function of a random variable}, The Annals of Probability, 10 (1982),
  pp.~799--809.

\bibitem{cacoullos1994variational}
{\sc T.~Cacoullos, V.~Papathanasiou, and S.~A. Utev}, {\em Variational
  inequalities with examples and an application to the central limit theorem},
  The Annals of Probability,  (1994), pp.~1607--1618.

\bibitem{cheeger1999differentiability}
{\sc J.~Cheeger}, {\em Differentiability of lipschitz functions on metric
  measure spaces}, Geometric \& Functional Analysis GAFA, 9 (1999),
  pp.~428--517.

\bibitem{chen2021stein}
{\sc L.~H. Chen}, {\em Stein’s method of normal approximation: Some
  recollections and reflections}, The Annals of Statistics, 49 (2021),
  pp.~1850--1863.

\bibitem{cheng2024fast}
{\sc X.~Cheng, B.~Wang, J.~Zhang, and Y.~Zhu}, {\em Fast conditional mixing of
  mcmc algorithms for non-log-concave distributions}, Advances in Neural
  Information Processing Systems, 36 (2024).

\bibitem{chernozhuokov2022improved}
{\sc V.~Chernozhuokov, D.~Chetverikov, K.~Kato, and Y.~Koike}, {\em Improved
  central limit theorem and bootstrap approximations in high dimensions}, The
  Annals of Statistics, 50 (2022), pp.~2562--2586.

\bibitem{chewi2024analysis}
{\sc S.~Chewi, M.~A. Erdogdu, M.~Li, R.~Shen, and M.~S. Zhang}, {\em Analysis
  of langevin monte carlo from poincare to log-sobolev}, Foundations of
  Computational Mathematics,  (2024), pp.~1--51.

\bibitem{chewi2020exponential}
{\sc S.~Chewi, T.~Le~Gouic, C.~Lu, T.~Maunu, P.~Rigollet, and A.~Stromme}, {\em
  Exponential ergodicity of mirror-langevin diffusions}, Advances in Neural
  Information Processing Systems, 33 (2020), pp.~19573--19585.

\bibitem{clarke2013functional}
{\sc F.~Clarke}, {\em Functional analysis, calculus of variations and optimal
  control}, vol.~264, Springer, 2013.

\bibitem{cordero2015moment}
{\sc D.~Cordero-Erausquin and B.~Klartag}, {\em Moment measures}, Journal of
  Functional Analysis, 268 (2015), pp.~3834--3866.

\bibitem{courtade2019existence}
{\sc T.~A. Courtade, M.~Fathi, and A.~Pananjady}, {\em Existence of stein
  kernels under a spectral gap, and discrepancy bounds}, in Annales de
  l'Institut Henri Poincar{\'e}, Probabilit{\'e}s et Statistiques, vol.~55,
  Institut Henri Poincar{\'e}, 2019, pp.~777--790.

\bibitem{cui2022prior}
{\sc T.~Cui, X.~T. Tong, and O.~Zahm}, {\em Prior normalization for certified
  likelihood-informed subspace detection of bayesian inverse problems}, Inverse
  Problems, 38 (2022), p.~124002.

\bibitem{engquist2024adaptive}
{\sc B.~Engquist, K.~Ren, and Y.~Yang}, {\em Adaptive state-dependent diffusion
  for derivative-free optimization}, Communications on Applied Mathematics and
  Computation, 6 (2024), pp.~1241--1269.

\bibitem{engquist2024sampling}
\leavevmode\vrule height 2pt depth -1.6pt width 23pt, {\em Sampling with
  adaptive variance for multimodal distributions}, arXiv preprint
  arXiv:2411.15220,  (2024).

\bibitem{fang2021high}
{\sc X.~Fang and Y.~Koike}, {\em High-dimensional central limit theorems by
  stein’s method}, The Annals of Applied Probability, 31 (2021),
  pp.~1660--1686.

\bibitem{fathi2019stein}
{\sc M.~Fathi}, {\em Stein kernels and moment maps}, The Annals of Probability,
  47 (2019), pp.~2172--2185.

\bibitem{figalli2017monge}
{\sc A.~Figalli}, {\em The Monge--Amp{\`e}re equation and its applications},
  2017.

\bibitem{flock2023certified}
{\sc R.~Flock, Y.~Dong, F.~Uribe, and O.~Zahm}, {\em Certified coordinate
  selection for high-dimensional bayesian inversion with laplace prior},
  Statistics and Computing, 34 (2024), p.~134.

\bibitem{girolami2011riemann}
{\sc M.~Girolami and B.~Calderhead}, {\em Riemann manifold langevin and
  hamiltonian monte carlo methods}, Journal of the Royal Statistical Society
  Series B: Statistical Methodology, 73 (2011), pp.~123--214.

\bibitem{heredia2024one}
{\sc D.~Heredia, A.~Joulin, and O.~Roustant}, {\em On one dimensional weighted
  poincare inequalities for global sensitivity analysis}, arXiv preprint
  arXiv:2412.04918,  (2024).

\bibitem{ibragimov2002exact}
{\sc R.~Ibragimov and S.~Sharakhmetov}, {\em The exact constant in the
  rosenthal inequality for random variables with mean zero}, Theory of
  Probability \& Its Applications, 46 (2002), pp.~127--132.

\bibitem{klartag2013poincare}
{\sc B.~Klartag}, {\em Poincar{\'e} inequalities and moment maps}, in Annales
  de la Facult{\'e} des sciences de Toulouse: Math{\'e}matiques, vol.~22, 2013,
  pp.~1--41.

\bibitem{klartag2014logarithmically}
\leavevmode\vrule height 2pt depth -1.6pt width 23pt, {\em
  Logarithmically-concave moment measures i}, in Geometric Aspects of
  Functional Analysis: Israel Seminar (GAFA) 2011-2013, Springer, 2014,
  pp.~231--260.

\bibitem{kolesnikov2016riemannian}
{\sc A.~V. Kolesnikov and E.~Milman}, {\em Riemannian metrics on convex sets
  with applications to poincar{\'e} and log-sobolev inequalities}, Calculus of
  Variations and Partial Differential Equations, 55 (2016), p.~77.

\bibitem{ledoux2015stein}
{\sc M.~Ledoux, I.~Nourdin, and G.~Peccati}, {\em Stein’s method, logarithmic
  sobolev and transport inequalities}, Geometric and Functional Analysis, 25
  (2015), pp.~256--306.

\bibitem{lelievre2024optimizing}
{\sc T.~Leli{\`e}vre, G.~A. Pavliotis, G.~Robin, R.~Santet, and G.~Stoltz},
  {\em Optimizing the diffusion coefficient of overdamped langevin dynamics},
  arXiv preprint arXiv:2404.12087,  (2024).

\bibitem{li2024sharp}
{\sc M.~T. Li, T.~Cui, F.~Li, Y.~Marzouk, and O.~Zahm}, {\em Sharp detection of
  low-dimensional structure in probability measures via dimensional logarithmic
  sobolev inequalities}, arXiv preprint arXiv:2406.13036,  (2024).

\bibitem{li2023principal}
{\sc M.~T. Li, Y.~Marzouk, and O.~Zahm}, {\em Principal feature detection via
  $\phi$-sobolev inequalities}, Bernoulli, 30 (2024), pp.~2979--3003.

\bibitem{li2022mirror}
{\sc R.~Li, M.~Tao, S.~S. Vempala, and A.~Wibisono}, {\em The mirror langevin
  algorithm converges with vanishing bias}, in International Conference on
  Algorithmic Learning Theory, PMLR, 2022, pp.~718--742.

\bibitem{liu2023mirror}
{\sc G.-H. Liu, T.~Chen, E.~Theodorou, and M.~Tao}, {\em Mirror diffusion
  models for constrained and watermarked generation}, Advances in Neural
  Information Processing Systems, 36 (2023), pp.~42898--42917.

\bibitem{liu2016kernelized}
{\sc Q.~Liu, J.~Lee, and M.~Jordan}, {\em A kernelized stein discrepancy for
  goodness-of-fit tests}, in International conference on machine learning,
  PMLR, 2016, pp.~276--284.

\bibitem{nakai2004density}
{\sc E.~Nakai, N.~Tomita, and K.~Yabuta}, {\em Density of the set of all
  infinitely differentiable functions with compact support in weighted sobolev
  spaces}, Scientiae Mathematicae Japonicae, 60 (2004), pp.~121--128.

\bibitem{nesterov1983method}
{\sc Y.~E. Nesterov}, {\em A method of solving a convex programming problem
  with convergence rate $\mathcal o(k^2)$}, in Doklady Akademii Nauk, vol.~269,
  Russian Academy of Sciences, 1983, pp.~543--547.

\bibitem{Teixeira2020}
{\sc M.~T. Parente, J.~Wallin, and B.~Wohlmuth}, {\em Generalized bounds for
  active subspaces}, Electronic Journal of Statistics, 14 (2020), pp.~917 --
  943.

\bibitem{patterson2013stochastic}
{\sc S.~Patterson and Y.~W. Teh}, {\em Stochastic gradient riemannian langevin
  dynamics on the probability simplex}, Advances in neural information
  processing systems, 26 (2013).

\bibitem{payne1960optimal}
{\sc L.~E. Payne and H.~F. Weinberger}, {\em An optimal poincar{\'e} inequality
  for convex domains}, Archive for Rational Mechanics and Analysis, 5 (1960),
  pp.~286--292.

\bibitem{pillaud2023kernelized}
{\sc L.~Pillaud-Vivien and F.~Bach}, {\em Kernelized diffusion maps}, in The
  Thirty Sixth Annual Conference on Learning Theory, PMLR, 2023,
  pp.~5236--5259.

\bibitem{pillaud2020statistical}
{\sc L.~Pillaud-Vivien, F.~Bach, T.~Leli{\`e}vre, A.~Rudi, and G.~Stoltz}, {\em
  Statistical estimation of the poincar{\'e} constant and application to
  sampling multimodal distributions}, in International Conference on Artificial
  Intelligence and Statistics, PMLR, 2020, pp.~2753--2763.

\bibitem{poincare1890equations}
{\sc H.~Poincar{\'e}}, {\em Sur les {\'e}quations aux d{\'e}riv{\'e}es
  partielles de la physique math{\'e}matique}, American Journal of Mathematics,
   (1890), pp.~211--294.

\bibitem{qian1999momentum}
{\sc N.~Qian}, {\em On the momentum term in gradient descent learning
  algorithms}, Neural networks, 12 (1999), pp.~145--151.

\bibitem{rockafellar1970convex}
{\sc R.~T. Rockafellar}, {\em Convex Analysis}, Princeton University Press,
  Princeton, 1970.

\bibitem{roustant2017poincare}
{\sc O.~Roustant, F.~Barthe, and B.~Iooss}, {\em Poincar{\'e} inequalities on
  intervals--application to sensitivity analysis},  (2017).

\bibitem{santambrogio2016dealing}
{\sc F.~Santambrogio}, {\em Dealing with moment measures via entropy and
  optimal transport}, Journal of Functional Analysis, 271 (2016), pp.~418--436.

\bibitem{saumard2019weighted}
{\sc A.~Saumard}, {\em Weighted poincar{\'e} inequalities, concentration
  inequalities and tail bounds related to stein kernels in dimension one},
  Bernoulli, 25 (2019), pp.~3978--4006.

\bibitem{song2019derivative}
{\sc S.~Song, T.~Zhou, L.~Wang, S.~Kucherenko, and Z.~Lu}, {\em
  Derivative-based new upper bound of sobol’sensitivity measure}, Reliability
  Engineering \& System Safety, 187 (2019), pp.~142--148.

\bibitem{stein1986approximate}
{\sc C.~Stein}, {\em Approximate computation of expectations}, IMS, 1986.

\bibitem{sutskever2013importance}
{\sc I.~Sutskever, J.~Martens, G.~Dahl, and G.~Hinton}, {\em On the importance
  of initialization and momentum in deep learning}, in International conference
  on machine learning, PMLR, 2013, pp.~1139--1147.

\bibitem{vempala2019rapid}
{\sc S.~Vempala and A.~Wibisono}, {\em Rapid convergence of the unadjusted
  langevin algorithm: Isoperimetry suffices}, Advances in neural information
  processing systems, 32 (2019).

\bibitem{verdiere2023diffeomorphism}
{\sc R.~Verdi{\`e}re, C.~Prieur, and O.~Zahm}, {\em Diffeomorphism-based
  feature learning using poincar{\'e} inequalities on augmented input space},
  (2023).

\bibitem{villani2021topics}
{\sc C.~Villani}, {\em Topics in optimal transportation}, Graduate Studies in
  Mathematics,  (2003).

\bibitem{zalinescu2002convex}
{\sc C.~Zalinescu}, {\em Convex analysis in general vector spaces}, World
  scientific, 2002.

\bibitem{zienkiewicz2000finite}
{\sc O.~C. Zienkiewicz, R.~L. Taylor, and R.~L. Taylor}, {\em The finite
  element method: solid mechanics}, vol.~2, Butterworth-heinemann, 2000.

\end{thebibliography}
}

\end{document}